\newtheorem{theorem}{Theorem}[section]
\newtheorem{proposition}[theorem]{Proposition}
\newtheorem{corollary}[theorem]{Corollary}
\newtheorem{lemma}[theorem]{Lemma}
\theoremstyle{definition}
\newtheorem{remark}[theorem]{Remark}
\newtheorem{definition}[theorem]{Definition}
\newcommand{\genset}{{\mathcal G}^*}
\newcommand{\gensetdyck}{{\mathcal G}}
\newcommand{\Geno}{Q^{*o}}
\newcommand{\catsuffix}{Q}
\newcommand{\cat}{M}
\newcommand{\catset}{{\mathcal M}}
\newcommand{\catsuffixset}{{\mathcal Q}}
\newcommand{\Cyl}{O}
\newcommand{\R}{\mathcal{R}}
\newcommand{\maj}{{\rm{maj}}}
\newcommand\touch[1]{\check{{#1}}}
\newcommand{\haut}{\operatorname{ht}}
\renewcommand{\H}{\operatorname{Heap}}
\newcommand{\Aaff}{\aff{A}}
\newcommand{\aff}[1]{\widetilde{#1}}
\newcommand\qbi[3]{{{#1}\atopwithdelims[]{#2}}_{#3}}
\newcommand\bi[2]{{{#1}\atopwithdelims(){#2}}}
\begin{document}

\title[Fully commutative involutions]{Combinatorics of fully commutative involutions in Classical Coxeter groups}

\author[Riccardo Biagioli]{Riccardo Biagioli}
\address{Institut Camille Jordan, Universit\'e Claude Bernard Lyon 1,
69622 Villeurbanne Cedex, France}
\email{biagioli@math.univ-lyon1.fr}
\urladdr{http://math.univ-lyon1.fr/{\textasciitilde}biagioli}

\author[Fr\'ed\'eric Jouhet]{Fr\'ed\'eric Jouhet}
\address{Institut Camille Jordan, Universit\'e Claude Bernard Lyon 1,
69622 Villeurbanne Cedex, France}
\email{jouhet@math.univ-lyon1.fr}
\urladdr{http://math.univ-lyon1.fr/{\textasciitilde}jouhet}

\author[Philippe Nadeau]{Philippe Nadeau}
\address{CNRS, Institut Camille Jordan, Universit\'e Claude Bernard Lyon 1,
69622 Villeurbanne Cedex, France}
\email{nadeau@math.univ-lyon1.fr}
\urladdr{http://math.univ-lyon1.fr/{\textasciitilde}nadeau}

\thanks{}

\date{\today}

\subjclass[2010]{}

\keywords{Fully commutative elements, involutions, Coxeter groups, generating functions, lattice walks, heaps, major index, Temperley--Lieb algebra.}

\begin{abstract}
An element of a Coxeter group $W$ is  fully commutative if any two of its reduced decompositions are related by a series of transpositions of adjacent commuting generators. In the present work, we focus on fully commutative involutions, which are characterized in terms of Viennot's heaps. By encoding the latter by Dyck-type lattice walks, we enumerate fully commutative involutions according to their length, for all classical finite and affine Coxeter groups. In the finite cases, we also find explicit expressions for their generating functions with respect to the major index. Finally in affine type $A$, we connect our results to Fan--Green's cell structure of the corresponding Temperley--Lieb algebra.
\end{abstract}

\maketitle

%\tableofcontents

%%%%%%%%%%%%%%%%%%%%%%%%%%%%%%%%%%%%%%%%%%%
\section*{Introduction}
\label{sec:intro}

Let $W$ be a Coxeter group. An element $w \in W$ is said to be {\em fully commutative} (FC) if any reduced expression for $w$ can be obtained from any other one by transposing adjacent pairs of commuting generators. Fully commutative elements were extensively studied by Stembridge in a series of papers~\cite{St1, St2, St3} where, among others, he classified the Coxeter groups having a finite number of FC elements and enumerated them in each case. 
It is known that these elements index a basis for the associated (generalized) Temperley--Lieb algebra (\cite{Fan,Graham}). The growth function of such an algebra can then be obtained by computing the generating function, with respect to the length, of fully commutative elements in $W$. This function has been computed by Barcucci {\em et al}~\cite{BDPR} in type $A$, by Hanusa and Jones~\cite{HanJon} in type $\Aaff$, and by the present authors~\cite{BJN} in all finite and affine types. A striking fact is that, in each affine case, the corresponding growth sequence is ultimately periodic. 

\medskip

In the present work we focus on FC involutions for all classical Coxeter groups. As explained by Stembridge in~\cite{St3}, a FC element $w$ is an involution if and only if its commutation class $\mathcal{R}(w)$ is {\em palindromic}, meaning that it includes the mirror image of some (equivalently, all) of its members. We will reformulate this in terms of heaps and certain Dyck-type lattice walks which encode them. 

As a first consequence, we will be able to enumerate, in types $A$, $B$, and $D$, FC involutions according to the \emph{major index}. This was recently done by Barnabei {\em et al.}~\cite{BBES} for the symmetric group (type $A$), by using the $321$-avoiding characterization of such elements, the Robinson-Schensted correspondence, and a nice connection to integer partitions. Our approach in terms of heaps uses neither pattern-avoidance characterizations nor the Robinson-Schensted correspondence. However, it also yields a connection to integer partitions through our Dyck-type lattice walks, which, as will be explained, turn out to be in bijection with the ones used in~\cite{BBES}. The advantage of our point of view is that it naturally extends to types $B$ and $D$ for which major indices can be defined, whereas the use of Stembridge's pattern-avoiding characterizations~\cite[Theorems~5.1 and 10.1]{St3} seems hard to handle. In type $B$, our result can for instance be written as follows (see Proposition~\ref{prop:majorB}):
$$\sum_{w\in \bar{B}_{n}^{FC}}q^{\maj(w)}=\sum_{h=1}^nq^h\sum_{i=0}^{h-1}\qbi{h-1}{i}{q}+\qbi{n}{\lfloor n/2 \rfloor}{q},$$
where the square brackets are the so-called $q$-binomial coefficients, $\bar{B}_{n}^{FC}$ is the set of  FC involutions in $B_n$, and $\maj$ is the major index.

\medskip

As a second consequence of our characterizations of FC involutions, we will also give the $t$-weighted generating functions for all types of classical finite and affine Coxeter group. More precisely, if $\bar{W}^{FC}$ denotes the subset of FC involutions of $W$ and $\ell$ denotes the Coxeter length, we will define $\bar{W}^{FC}(t):=\sum_{w\in \bar{W}^{FC}}t^{\ell(w)}$ as the length generating function for FC involutions in $W$. We will use both our characterization in terms of heaps and the way these are encoded by Dyck-type lattice walks to compute  $\bar{W}^{FC}(t)$ when $W$ is finite or affine. In the affine case, we will also show that the corresponding growth sequences are ultimately periodic, with periods dividing the values recorded in the following table (see Propositions~\ref{prop:involutionsAaffines} and~\ref{prop:involutionsBCDaffines}):
$$\begin{array}{ l || c|c|c|c}
    \textsc{Affine Type} &\aff{A}_{n-1}\;(n\;\mbox{even})&\aff{C}_n&\aff{B}_{n+1}&\aff{D}_{n+2} \\ \hline
    \textsc{Periodicity} &n&2n+2&(2n+1)(2n+2)&2n+2\\
   \end{array}$$
   (if $n$ is odd, the number of fully commutative involutions in $\aff{A}_{n-1}$ is finite.)\\

Finally, as a third consequence of our approach, we will relate our previous characterization in affine type $\Aaff$ to Fan--Green's cells structure of  the associated Temperley--Lieb algebra described in~\cite{FanGreen_Affine}. More precisely, we will see how the use of heaps highlights and simplifies some of their results connected to FC involutions, such as~\cite[Theorem 3.5.1]{FanGreen_Affine}.

\medskip

This paper is organized as follows. In Section~\ref{sec:FCihw}, we recall definitions and basic results on Coxeter groups, fully commutative involutions, heaps and walks. In Section~\ref{sec:major}, we enumerate FC involutions with respect to the major index in classical finite types. Section~\ref{sec:length} is devoted to the characterization of FC involutions and their enumeration with respect to the Coxeter length in classical finite and affine types. Finally, in Section~\ref{sec:cells}, after recalling the cell structure on FC elements defined in~\cite{FanGreen_Affine} using the type $\Aaff$ Temperley--Lieb algebra, we show how the use of heaps makes the combinatorics of these cells more explicit.

%%%%%%%%%%%%%%%%%%%%%%%%%%%%%%%%%%%%%%%%%%%%%%%%%%%%%%%%%%%%%%%%%
\section{Fully commutative involutions, heaps and walks}\label{sec:FCihw}
%%%%%%%%%%%%%%%%%%%%%%%%%%%%%%%%%%%%%%%%%%%%%%%%%%%%%%%%%%%%%%%

%%%%%%%%%%%%%%%%%%%%%%%
\subsection{Coxeter groups, length and major index}\label{subsec:coxeter}
%%%%%%%%%%%%%%%%%%%%%%%

We refer to~\cite{Humphreys} for standard notations and terminology pertaining to general Coxeter groups. A {\em Coxeter system}  is a pair $(W,S)$ where $W$ is a group and $S\subset W$ is a finite set of generators for $W$ subject only to relations of the form $(st)^{m_{st}}=1$, where $m_{ss}=1$, and  $m_{st}=m_{ts}\geq 2$, for $s\neq t \in S$. If $st$ has infinite order we set $m_{st}=\infty$. These relations can be rewritten more explicitly as $s^2=1$ for all $s\in S$, and   
$$\underbrace{sts\cdots}_{m_{st}}=\underbrace{tst\cdots}_{m_{ts}},$$
where $m_{st} <\infty$. They are the so-called {\em braid relations}. When ${m_{st}}=2$, they are simply named {\em commutation relations}, $st=ts$. 

The Coxeter graph of $(W,S)$ will be denoted by $\Gamma$. The irreducible Coxeter systems corresponding to finite and affine Coxeter groups are completely classified (see~\cite{BjorBrenbook,Humphreys}), and the Coxeter graphs corresponding to the classical families are depicted in Figures~\ref{fig:finite_diagrams} and~\ref{fig:affinediagrams}. Here, and all along this paper, the indexing of the classical Coxeter graphs is slightly different from the more standard one  used in~\cite{St1}--\cite{St3}, and~\cite{BJN}, but it is more appropriate when one considers both the Coxeter length and the major index. 

\begin{figure}[!ht]
\begin{center}
\includegraphics[width=\textwidth]{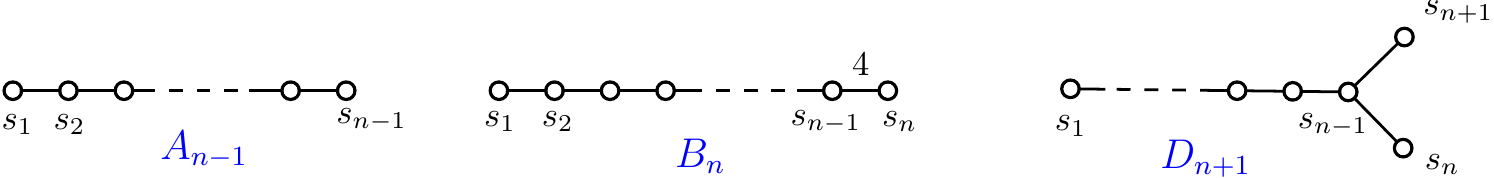}
\caption{\label{fig:finite_diagrams} Coxeter graphs for classical irreducible finite types.}
\end{center}
\end{figure}
\begin{figure}[!ht]
\begin{center}
 \includegraphics[width=\textwidth]{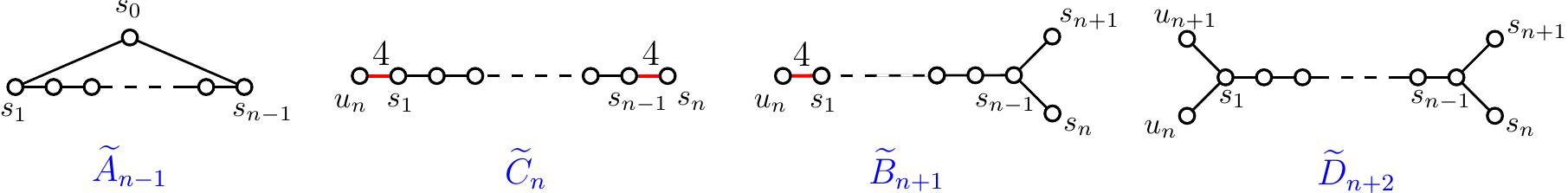}
 \caption{\label{fig:affinediagrams} Coxeter graphs for classical irreducible affine types.}
\end{center}
\end{figure}

For $w \in W$, the {\em length} of $w$, denoted by $\ell(w)$, is the minimum length $l$ of any expression $w=s_1\cdots s_l$ with $s_i\in S$. These expressions of length $\ell(w)$ are called \emph{reduced}, and we denote by $\mathcal{R}(w)$ the set of all reduced expressions of $w$. 

The {\em (right) descent set} of an element $w\in W$ is defined as follows:
$${\rm Des}_R(w)=\{ s \in S \mid \ell(ws)<\ell(w) \},$$ 
and its cardinality is usually called the {\em descent number} ${\rm des}(w)$. 

We define the {\em major index} of $w \in W$ as the sum of the labels of the descents of $w$; more precisely
$${\maj(w)}:=\sum_{s_i \in {\rm Des}_R(w)} i.$$
It is clear that the major index depends on the indexing of the generating set $S$ of $W$. In the case of the symmetric group $A_{n-1}$, the notion of major index is standard, and a very famous result states that the major index and the length are equidistributed over $A_{n-1}$ (see~\cite{MM, Foata}). For other Coxeter groups, several definitions exist. With our previous indexing of the Coxeter graphs,  the  major index defined above corresponds to the one already used by Reiner~\cite{Re}, and Steingr\'imsson~\cite{Stein} in the type $B$ case.

%%%%%%%%%%%%%%%%%%%%%%%
\subsection{Fully commutative elements and heaps}\label{subsec:heaps}
%%%%%%%%%%%%%%%%%%%%%%%

In this section, we recall the definition of fully commutative elements in Coxeter groups and its relation with the theory of heaps. We then explain how the  subfamily of alternating heaps corresponding to involutions can be encoded by various classes of Dyck-type lattice walks.
 
According to the well known {\em Matsumoto-Tits word property},  any expression in $\mathcal{R}(w)$ can be obtained from any other using only braid relations (see for instance~\cite[Section 3.3]{BjorBrenbook}). The notion of full commutativity is a strengthening of this property.

\begin{definition}
\label{defi:FC}
An element $w$ is {\em fully commutative} if any reduced expression for $w$ can be obtained from any other by using only commutation relations.
\end{definition}

We let $S^*$ be the free monoid generated by $S$. The equivalence classes of the congruence on $S^*$   generated by the commutation relations are usually called {\em commutation classes}. We recall from Stembridge in~\cite[Prop. 2.1]{St1} that an element $w\in W$ is FC if, and only if, all its reduced words avoid all factors of the form $sts\cdots$ with length $m_{st}\geq 3$. 

By definition the set $\mathcal{R}(w)$ forms a single commutation class; we will see that the concept of {\em heap} helps to capture the notion of full commutativity. We use the definition as given in~\cite[p. 20]{GreenBook} (see also~\cite[Definition 2.2]{KrattHeaps}), and we refer to \cite{BJN} for details of the relation with FC elements.

\begin{definition}[Heap]
\label{defi:heaps}
Let $\Gamma$ be a finite graph with vertex set $S$. A {\em heap} on $\Gamma$ (or {\em of type} $\Gamma$) is a finite poset $(H,\leq)$, together with a labeling map $\epsilon:H\to\Gamma$, which satisfies the following conditions:

\begin{enumerate}
\item For any vertex $s$, the subposet $H_s:=\epsilon^{-1}(\{s\})$ is totally ordered, and for any edge $\{s,t\}$, the subposet $H_{\{s,t\}}:=\epsilon^{-1}(\{s,t\})$ is totally ordered.
\item The ordering on $H$ is the transitive closure of the relations given by all chains $H_s$ and $H_{\{s,t\}}$, i.e. the smallest partial ordering containing these chains.
\end{enumerate}
\end{definition}

Two heaps on $\Gamma$ are {\em isomorphic } if there exists a poset isomorphism between them which preserves the labels.
The size $|H|$ of a heap $H$ is its cardinality. Given any subset $I\subset S$, we will note $H_{I}$ the subposet induced by all elements of $H$ with labels in $I$. In particular $H_{\{s\}}$ is the chain  $H_{s}=s^{(1)}<s^{(2)}<\cdots<s^{(k)}$ where $k=|H_s|$ is its cardinality. If $s,t$ are two labels such that $m_{st}\geq 3$, note that $H_{\{s,t\}}$ is also a chain.

 Fix a word $\mathbf{w}=s_{1}\cdots s_{l}$ in $S^*$. Define a partial ordering $\prec$ of the index set $\{1,\ldots, l\}$ as follows: set $i\prec j$ if $i<j$ and $\{s_{i},s_{j}\}$ is an edge of $\Gamma$, and extend by transitivity. We denote by $\H({\mathbf{w}})$ this poset together with $\epsilon:i\mapsto s_{a_i}$. It is easy to see that this indeed forms a heap as in Definition~\ref{defi:heaps}. %Also, words from the same commutation class are sent to isomorphic heaps.

\begin{proposition}[Viennot, \cite{ViennotHeaps}]
\label{prop:wordtoheap} 
 Let $\Gamma$ be a finite graph. The map $\mathbf{w}\to \H({\mathbf{w}})$ induces a bijection between $\Gamma$-commutation classes of words and isomorphism classes of finite heaps on  $\Gamma$ . 
\end{proposition}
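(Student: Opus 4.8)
The plan is to identify each commutation class of words with the set of linear extensions of a single abstract heap, and then run the usual back-and-forth. Throughout, call two pieces of a heap (or two positions of a word) \emph{linked} when they carry the same label $s$, or labels $s,t$ spanning an edge $\{s,t\}$ of $\Gamma$. By axiom~(1) of Definition~\ref{defi:heaps}, any two linked pieces of a heap lie in a common chain $H_s$ or $H_{\{s,t\}}$ and are therefore comparable; dually, a single commutation move on a word is precisely the transposition of two \emph{adjacent, non-linked} (hence commuting) letters. With this vocabulary, I would first check that the map descends to commutation classes: if $\mathbf{w}'$ is obtained from $\mathbf{w}=s_1\cdots s_l$ by swapping the commuting letters in positions $i,i+1$, then these positions are non-linked and hence incomparable in $\H(\mathbf{w})$, so the bijection of index sets transposing $i$ and $i+1$ is a label-preserving poset isomorphism $\H(\mathbf{w})\cong\H(\mathbf{w}')$. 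Iterating shows $\mathbf{w}\mapsto\H(\mathbf{w})$ is well defined on commutation classes.

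The heart of the argument is the identification of commutation classes with linear extensions. Reading a word along the natural order $1<2<\cdots<l$ exhibits $\mathbf{w}$ itself as a linear extension of $\H(\mathbf{w})$, since $i\prec j$ forces $i<j$. I would then prove a two-way statement: the words in the commutation class of $\mathbf{w}$ are \emph{exactly} the label sequences of the linear extensions of the fixed heap $H:=\H(\mathbf{w})$. One inclusion is the first paragraph read dynamically, as every commutation move is an adjacent transposition of two incomparable pieces, which keeps us among linear extensions of $H$ without changing $H$. For the reverse inclusion I would invoke the classical fact that any two linear extensions of a finite poset are connected by a sequence of transpositions of adjacent incomparable elements; since adjacent incomparable pieces are non-linked, hence carry commuting labels, each such transposition is a legal commutation move, so every linear extension of $H$ has its label word in the commutation class of $\mathbf{w}$.

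Next I would establish surjectivity together with the inverse map: given an abstract heap $H$, pick any linear extension, read off its label word $\mathbf{w}$, and verify $\H(\mathbf{w})\cong H$ via the identity on pieces. The content here is exactly heap axiom~(2): a relation $x\leq y$ in $H$ is a transitive chain of comparabilities drawn from the chains $H_s$ and $H_{\{s,t\}}$, that is, a chain of successively linked pieces, each consecutive pair of which becomes a $\prec$-relation in $\H(\mathbf{w})$ in the correct direction, since the chosen linear extension respects $H$; by transitivity $x\prec y$. Conversely any $\prec$-relation in $\H(\mathbf{w})$ unwinds into consecutive linked pairs, each comparable in $H$ with the same orientation, giving $x\leq y$. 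Hence the two orders coincide, which proves surjectivity.

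Injectivity is then immediate: if $\H(\mathbf{w})\cong\H(\mathbf{w}')$, transporting the linear extension $\mathbf{w}'$ through the isomorphism realises $\mathbf{w}'$ as the label word of a linear extension of $H=\H(\mathbf{w})$, so by the second paragraph $\mathbf{w}'$ lies in the commutation class of $\mathbf{w}$. The step I expect to demand the most care is the recovery $\H(\mathbf{w})\cong H$: one must use axiom~(2) in both directions and track the orientation of each linked pair, so that the transitive closure defining $\prec$ reproduces $\leq$ exactly, rather than a coarsening or a refinement. The appeal to connectivity of linear extensions by adjacent transpositions is standard but should be cited explicitly, as it is what turns the purely order-theoretic statement into the commutation-class statement.
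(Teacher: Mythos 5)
The paper gives no proof of this proposition at all: it is stated as Viennot's theorem and simply cited to \cite{ViennotHeaps}, so there is no internal argument to compare yours against. Your proof is correct, and it is essentially the classical argument one finds in Viennot's paper and in Krattenthaler's appendix \cite{KrattHeaps}: identify a commutation class with the set of label-readings of linear extensions of a fixed heap, using (i) that a commutation move is exactly an adjacent transposition of two non-linked, hence incomparable, pieces, and (ii) the standard fact that any two linear extensions of a finite poset are connected by adjacent transpositions of incomparable elements; surjectivity then follows by reading off a linear extension of an abstract heap $H$ and checking, via both axioms of Definition~\ref{defi:heaps}, that the order $\prec$ it induces coincides with the original order on $H$, and injectivity follows by transporting a linear extension through a heap isomorphism. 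All four steps (well-definedness, the two inclusions, surjectivity, injectivity) are sound as written, and you correctly flag the two places where care is needed: the connectivity of linear extensions, and the orientation bookkeeping in the recovery $\H(\mathbf{w})\cong H$.

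One point you silently correct and that deserves to be explicit: the paper's construction of $\H(\mathbf{w})$ declares $i\prec j$ when $i<j$ and $\{s_i,s_j\}$ is an edge of $\Gamma$, which read literally omits the case $s_i=s_j$; under that reading $\H(\mathbf{w})$ would violate condition (1) of Definition~\ref{defi:heaps} (for $\mathbf{w}=ss$ the two pieces would be incomparable). Your notion of \emph{linked} (equal labels, or labels joined by an edge) is the right one, and it is precisely what makes both your well-definedness step (adjacent non-linked letters are incomparable) and your surjectivity step (linked pieces are comparable, with orientation forced by the linear extension) go through.
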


As explained in~\cite{BJN}, if $w$ is a FC element and $\mathbf{w}$ is a reduced word for it, we can define $\H(w):=\H(\mathbf{w})$, and  heaps of this form are called {\em FC heaps}.
In Figure~\ref{fig:Heaps_Invol}, we consider the Dynkin diagram $B_7$, and we give two examples of words with the corresponding FC heaps. In the Hasse diagram of $\H(\mathbf{w})$, elements with the same labels are drawn in the same column. 

\begin{figure}[!ht]
\begin{center}
\includegraphics[width=12cm]{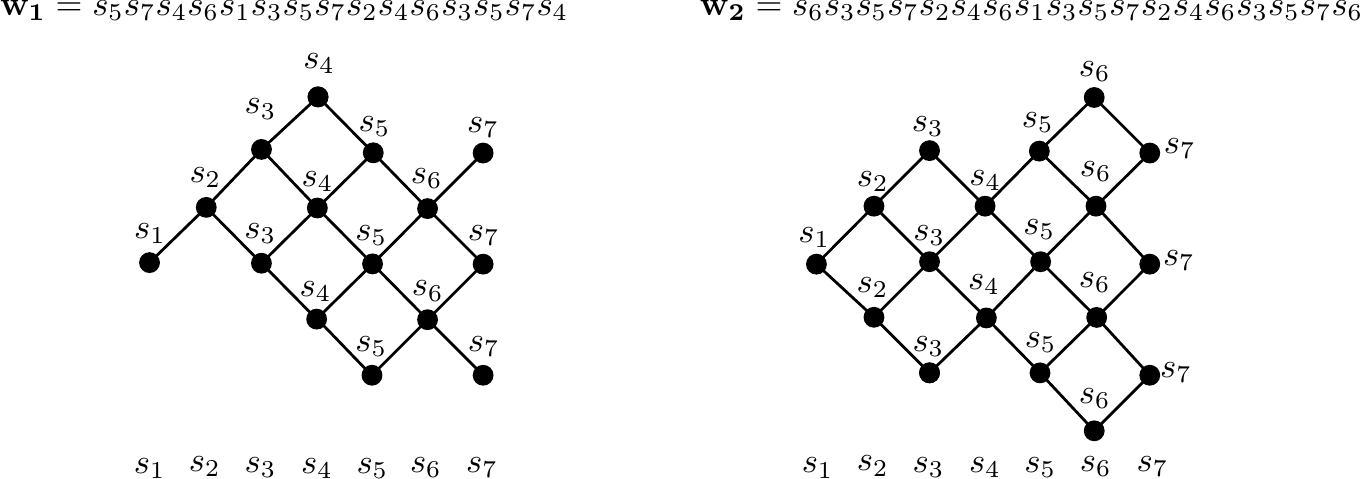}
\caption{Two heaps on $B_7$, the one on the right corresponding to a FC involution.}\label{fig:Heaps_Invol}  
\end{center}
\end{figure}

Let us point out a simple operation on heaps: if $(H,\leq,\epsilon)$ is a heap, then its {\em dual} is $(H,\geq,\epsilon)$, which is the heap with the inverse order, and where the labels are kept the same. We will say that a heap is \emph{self-dual} if it is isomorphic to its dual. Then we have the following result, which is essentially proved in~\cite{St3}.
\begin{lemma}\label{lemme:heapinvolution}
Let $W$ be a Coxeter group with corresponding Coxeter graph $\Gamma$. A fully commutative element $w\in W$ is an involution if and only if  $\H(w)$ is  self-dual.
\end{lemma}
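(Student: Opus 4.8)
The plan is to translate the algebraic condition $w=w^{-1}$ into a combinatorial statement about heaps by relating word reversal to heap duality, and then to invoke Viennot's bijection (Proposition~\ref{prop:wordtoheap}). Throughout, fix a reduced word $\mathbf{w}=s_1\cdots s_l$ for the FC element $w$, and let $\mathbf{w}^{\mathrm{rev}}=s_l\cdots s_1$ denote its reversal.

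First I would record the two elementary facts linking $\mathbf{w}$ to $w^{-1}$. On the one hand, $\mathbf{w}^{\mathrm{rev}}$ is a reduced word for $w^{-1}$, and since commutation equivalence is stable under reversal, $w^{-1}$ is again fully commutative, so $\H(w^{-1})$ is well defined and equals $\H(\mathbf{w}^{\mathrm{rev}})$. On the other hand, the central observation is that $\H(\mathbf{w}^{\mathrm{rev}})$ is isomorphic to the dual $(H,\geq,\epsilon)$ of $\H(\mathbf{w})$. To see this I would exhibit the index-reversing map $i\mapsto l+1-i$, which is clearly a label-preserving bijection on the underlying set $\{1,\dots,l\}$; since the relation $\prec$ is defined by the condition that $i<j$ and $\{s_i,s_j\}$ is an edge of $\Gamma$, and the edge condition is symmetric while the inequality between the two indices is reversed by $i\mapsto l+1-i$, this map sends the defining relations of $\H(\mathbf{w}^{\mathrm{rev}})$ exactly to the defining relations of the dual order on $\H(\mathbf{w})$. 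Combining the two facts gives $\H(w^{-1})\cong(H,\geq,\epsilon)$, i.e.\ $\H(w^{-1})$ is the dual of $\H(w)$.

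With this in hand, both implications follow quickly from Proposition~\ref{prop:wordtoheap}. Because $w$ is FC, the set $\mathcal{R}(w)$ is a single commutation class. If $w=w^{-1}$, then $\mathbf{w}^{\mathrm{rev}}\in\mathcal{R}(w^{-1})=\mathcal{R}(w)$, so $\mathbf{w}^{\mathrm{rev}}$ and $\mathbf{w}$ lie in the same commutation class; by Viennot's bijection this means $\H(w)\cong\H(\mathbf{w}^{\mathrm{rev}})$, which by the previous paragraph is precisely self-duality of $\H(w)$. Conversely, if $\H(w)$ is self-dual, then $\H(\mathbf{w})\cong\H(\mathbf{w}^{\mathrm{rev}})$, so by the same bijection $\mathbf{w}^{\mathrm{rev}}$ and $\mathbf{w}$ are commutation-equivalent; hence $\mathbf{w}^{\mathrm{rev}}$ is a reduced word for $w$, but it also represents $w^{-1}$, forcing $w=w^{-1}$.

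The step I expect to require the most care is the identification of $\H(\mathbf{w}^{\mathrm{rev}})$ with the dual heap. The order on a heap is defined as the \emph{transitive closure} of the chains $H_s$ and $H_{\{s,t\}}$ (equivalently, of the covering-type relations generated by $\prec$), so one must check not merely that the generating relations are reversed by $i\mapsto l+1-i$, but that this bijection intertwines the two transitive closures and respects the labeling $\epsilon$. This is bookkeeping rather than a genuine difficulty, but it is the point where the argument must be made precise; everything else reduces to the formal properties of Viennot's correspondence and the behavior of $\mathcal{R}(w)$ under inversion.
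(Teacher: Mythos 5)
Your proof is correct and follows essentially the same route as the paper's: both characterize an FC involution by the condition that the reversal of a reduced word again lies in $\mathcal{R}(w)$, identify the heap of the reversed word with the dual heap, and use Viennot's bijection (Proposition~\ref{prop:wordtoheap}) together with the fact that $\mathcal{R}(w)$ is a single commutation class. The only difference is one of detail: you make explicit the index-reversing isomorphism $i\mapsto l+1-i$ and its compatibility with transitive closures, a point the paper compresses into ``which is by definition the dual of $\H(w)$.''
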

\begin{proof}
A FC element $w \in W$ is an involution if and only if for a (equivalently, any) ${\bf w}=s_{i_1}\cdots s_{i_n} \in \R(w)$,  the reverse $\overleftarrow{\bf{w}}:= s_{i_n}\cdots s_{i_1}$ is also in $\R(w)$.  This is equivalent to $\H(w)=\H(\overleftarrow{\bf{w}})$, which is by definition the dual of $\H(w)$.   
\end{proof}

%%%%%%%%%%%%%%%%%%%%%%%
\subsection{Alternating heaps and self-dual right-peaks}\label{subsec:heaps_finite}
%%%%%%%%%%%%%%%%%%%%%%%
In this section we recall the classification of the FC involutions in the classical finite Coxeter groups essentially given in~\cite{St3}. We use a description in terms of heaps derived from~\cite[\S 2.5, \S 4.4]{BJN}, together with Lemma~\ref{lemme:heapinvolution}.
\smallskip

First, we need to recall a few definitions. We fix $m_{v_0v_1},m_{v_1v_2},\ldots, m_{v_{n-1}v_{n}}$ in the set $\{3,4,\ldots\}\cup\{\infty\}$ and we consider the Coxeter system $(W,S)$ corresponding to the {\em linear} Coxeter graph $\Gamma_n=\Gamma_n((m_{v_iv_{i+1}})_i)$ of Figure~\ref{fig:linear_diagram}.

\begin{figure}[h!]
\begin{center}
\includegraphics{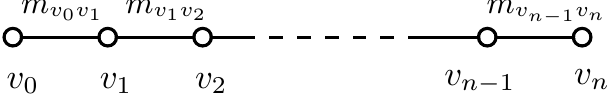}
\caption{\label{fig:linear_diagram} The linear Coxeter graph $\Gamma_n$.}
\end{center}
\end{figure}

Note that this linear diagram contains as special cases the Coxeter diagrams of types $A_{n-1}$, $B_n$ and $\tilde{C}_{n}$.

\begin{definition}
\label{defi:alternating}
Consider the linear graph $\Gamma_n$, and a heap $H$ on $\Gamma_n$. We say that $H$ is {\em alternating} if for each edge $\{v_i,v_{i+1}\}$ of $\Gamma_n$, the chain $H_{\{v_i,v_{i+1}\}}$ has alternating labels $v_i$ and $v_{i+1}$.
\end{definition}
Next, we will need the following families of heaps, which are specific to the types $B$ and $D$.
\begin{definition}
A {\em self-dual right-peak of type $B_n$} is a heap such that there exists $j \in\{1,\dots,n-1\}$ satisfying:
\begin{enumerate}
\item[(a)]  $H_{\{s_j,\ldots, s_n\}}=\H(s_j\cdots s_{n-1}s_n s_{n-1} \cdots s_j)$;
\item[(b)] $H_{\{s_{j-1}, s_{j}\}}=s_j s_j$ or $s_{j-1}s_js_js_{j-1}$ for $j>1$, and $s_1s_1$ for $j=1$; 
\item[(c)]  $H_{\{{s}_{1},\ldots,\dot{s}_{j} \}}$ is self-dual alternating, where $\dot{s}_j$ means that one $s_j$ is deleted.
\end{enumerate}
\end{definition}

\begin{figure}[h!]
\begin{center}
\includegraphics[height=3cm]{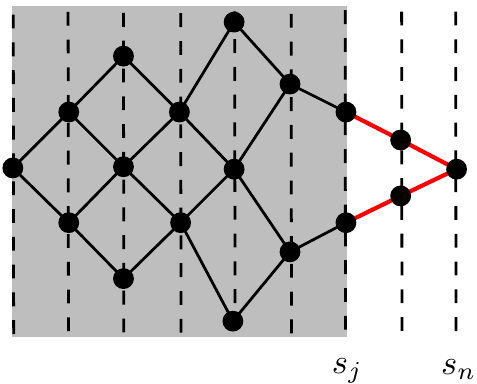}
\caption{\label{fig:RP_typeB} A self-dual right-peak of type $B_n$.}
\end{center}
\end{figure}

The Coxeter diagram of type $D_{n+1}$ is not linear. We can nevertheless define self-dual alternating heaps and self-dual right-peaks based on the previous definitions in type $B_n$. More precisely, {\em self-dual alternating heaps of type} $D_{n+1}$ are obtained from self-dual alternating heaps of type $B_n$ having either zero or an odd number of elements labeled $s_n$, by replacing these by $s_n$ or $s_{n+1}$ alternatively, or by $s_ns_{n+1}$  if there was originally exactly one label $s_n$, cf. Figure~\ref{fig:alt_typeD}.

\begin{figure}[h!]
\begin{center}
\includegraphics[height=3cm]{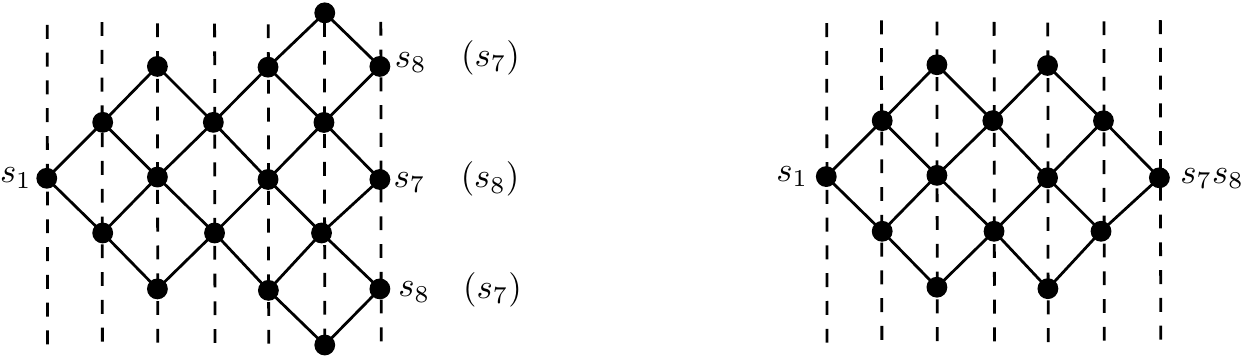}
\caption{\label{fig:alt_typeD} Two self-dual alternating heaps of type $D_8$.}
\end{center}
\end{figure}

{\em Self-dual right peaks of type} $D_{n+1}$ are heaps satisfying conditions $(b)$ and $(c)$ above, and such that there exists $j \in\{1,\dots,n-1\}$ with:
\begin{enumerate}
\item[{\em (a')}] $H_{\{s_j,\ldots, s_{n+1}\}}=\H(s_j\cdots s_{n-1}s_ns_{n+1} s_{n-1} \cdots s_j)$.
\end{enumerate}

\begin{remark}
\label{rem:familyPeaks}
In the above families of right-peaks the index $j$ is {\em uniquely determined}; this will be particularly useful for enumerating purposes.
\end{remark}
From  \cite[Theorem 3.10]{BJN} and Lemma~\ref{lemme:heapinvolution}, we have the following result.
\begin{proposition}[Classification of FC involutions in classical types]\label{prop:typeclassiques}
A FC element $w \in A_{n-1}$ is an involution if and only if $\H(w)$ is a self-dual alternating heap. Moreover, a FC element $w \in B_n$ (\emph{resp.} $w \in D_{n+1}$)   is an involution if and only if $\H(w)$ is either a self-dual alternating heap of type $B_n$ (\emph{resp.}  $D_{n+1}$) or a self-dual right-peak of type $B_n$ (\emph{resp.} $D_{n+1}$). 
\end{proposition}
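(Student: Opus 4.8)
The plan is to combine two ingredients that are already at our disposal: the classification of \emph{all} fully commutative elements of the classical types in terms of their heaps, namely \cite[Theorem 3.10]{BJN}, together with the self-duality criterion of Lemma~\ref{lemme:heapinvolution}. The former tells us exactly which heaps occur as $\H(w)$ when $w$ is fully commutative, while the latter says that, among these, the involutions are precisely the elements whose heap is isomorphic to its dual. The whole proof then reduces to checking that the \emph{self-dual} members of each fully commutative family coincide with the families of heaps introduced just above the statement.

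First I would recall from \cite[Theorem 3.10]{BJN} that in type $A_{n-1}$ the heaps of fully commutative elements are exactly the alternating heaps on $\Gamma_n$, whereas in types $B_n$ and $D_{n+1}$ they are the alternating heaps together with a family of right-peaks (in their general, not-necessarily self-dual form). Applying Lemma~\ref{lemme:heapinvolution}, a fully commutative element $w$ is an involution if and only if $\H(w)$ is self-dual and lies in one of these families. In type $A$ this already yields the statement, since a self-dual alternating heap is by definition an alternating heap isomorphic to its dual, so nothing further is needed. The remaining content is the identification of the self-dual heaps in the fully commutative families of types $B$ and $D$.

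The heart of the argument is therefore the analysis in types $B$ and $D$. For an alternating heap, self-duality is exactly the defining condition of a self-dual alternating heap, so this case is immediate. For a right-peak I would argue structurally: such a heap carries a unique peak occurring at some index $j$ (this is exactly the uniqueness recorded in Remark~\ref{rem:familyPeaks}), and I would show that requiring $\H(w)$ to be isomorphic to its dual forces three simultaneous constraints. The top part $H_{\{s_j,\ldots,s_n\}}$ must become palindromic, i.e.\ isomorphic to $\H(s_j\cdots s_{n-1}s_n s_{n-1}\cdots s_j)$, which is condition (a); the part below the peak, $H_{\{s_1,\ldots,\dot s_j\}}$, must itself be self-dual alternating, which is condition (c); and the bridging edge $\{s_{j-1},s_j\}$ can only carry one of the listed local patterns, giving condition (b). In type $D$ one runs the same analysis, invoking the translation between type $B$ and type $D$ heaps described before the statement (whereby an odd or zero number of labels $s_n$ is replaced by an alternation of $s_n$ and $s_{n+1}$), which turns condition (a) into (a') while leaving (b) and (c) untouched.

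I expect the main obstacle to be the bookkeeping in the type $B$/$D$ right-peak case, namely verifying that the index $j$ of the peak is compatible with self-duality and that conditions (a), (b), (c) are simultaneously necessary \emph{and} sufficient: one must confirm both that every self-dual right-peak satisfies them and that no self-dual fully commutative heap is overlooked, in particular that the options listed in (b) are genuinely exhaustive. The type $D$ fork adds a further layer of case analysis at the top of the heap, since the two branches labelled $s_n$ and $s_{n+1}$ have to be interchanged consistently under the duality; this is precisely where the preliminary translation between types $B$ and $D$ does most of the work, reducing the $D$ verification to the already-settled $B$ one.
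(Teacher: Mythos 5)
Your proposal follows exactly the paper's own route: the paper derives Proposition~\ref{prop:typeclassiques} directly by combining the heap classification of all FC elements in classical types from \cite[Theorem 3.10]{BJN} with the self-duality criterion of Lemma~\ref{lemme:heapinvolution}, which is precisely your two ingredients. The verification you outline (that the self-dual members of each FC family are exactly the self-dual alternating heaps and self-dual right-peaks, with self-duality forcing conditions (b) and (c) and the type $D$ case reducing to type $B$) is the routine bookkeeping the paper leaves implicit, and your sketch of it is sound.
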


%%%%%%%%%%%%%%%%%%%%%%%
\subsection{Motzkin type lattice walks}\label{subsec:walks}
%%%%%%%%%%%%%%%%%%%%%%%

Next, we define the lattice walks we will use in the sequel to compute generating functions for FC involutions.

\begin{definition}[Walks]
\label{defi:walks}
A {\em walk} of length $n$ is a sequence $P=(P_0,P_1,\ldots,P_n)$ of points in $\mathbb{N}^2$ with its $n$ steps in the set $\{(1,1),(1,-1),(1,0)\}$, such that $P_0$ has abscissa $0$ and all horizontal steps $(1,0)$ can only occur between points on the $x$-axis.
\end{definition}

The set of all walks of length $n$ will be denoted by $\genset_n$. The subset of walks starting at $P_0=(0,0)$ will be denoted by $\catsuffixset^*_n$, and the subset of $\catsuffixset^*_n$  ending at $P_n=(n,0)$ will be denoted by  $\catset^*_n$. To each family $\mathcal{F}^*_n \subseteq \genset_n$ corresponds the subfamily ${\mathcal{F}_n}$ consisting of those walks with no horizontal step, and $\touch{\mathcal{F}_n}\subseteq\mathcal{F}_n$ consisting of the ones which hit moreover the $x$-axis at some point. %\smallskip

  The \emph{total height} $\haut$ of a walk is the sum of the heights of its points: if $P_i=(i,h_i)$ then $\haut(P)=\sum_{i=0}^n h_i$. To each family $\mathcal{F}^*_n \subseteq \genset_n $ we associate the series $F_n^*(t)=\sum_{P\in \mathcal{F}^*_n}t^{\haut(P)}$, and we define the generating functions in the variable $x$ by 
  
\[ F^*(x)=\sum_{n\geq 0} F_n^*(t) x^n, \quad F(x)=\sum_{n\geq 0} F_n(t) x^n \quad {\rm and} \quad \touch{F}(x)=\sum_{n\geq 0}\touch{ F}_n(t) x^n.\]

Notice that $\cat(x)$ (\emph{resp.} $\catsuffix(x)$)  counts  Dyck paths (\emph{resp.} prefixes of Dyck paths), taking into account the height. Moreover, the generating function  $\cat^*(x)$ can easily be related to  $\cat(x)$: a path in $\catset^*_n$ can be seen as a concatenation of Dyck paths and horizontal steps which alternate, remembering that the set of Dyck paths contains the empty one. Therefore one can write:
\begin{equation}\label{lienDyck}
\cat^*(x)=\frac{\cat(x)}{1-x\cat(x)}.
\end{equation}

Recall that, setting $t=1$, the series $\cat(x)$ simply becomes the generating function for the famous Catalan numbers, and is therefore equal to the generating function for $321$-avoiding permutations, or equivalently FC elements of type $A$ (see~\cite{BJN,BJS,St3}).

Moreover, we have the recursive relation
\begin{equation}\label{eqfunccat}
\cat(x)=1+tx^2\cat(x)\cat(tx),
\end{equation}
as can be seen by considering the first return to the $x$-axis, as follows:

\begin{center}
\includegraphics[width=11cm]{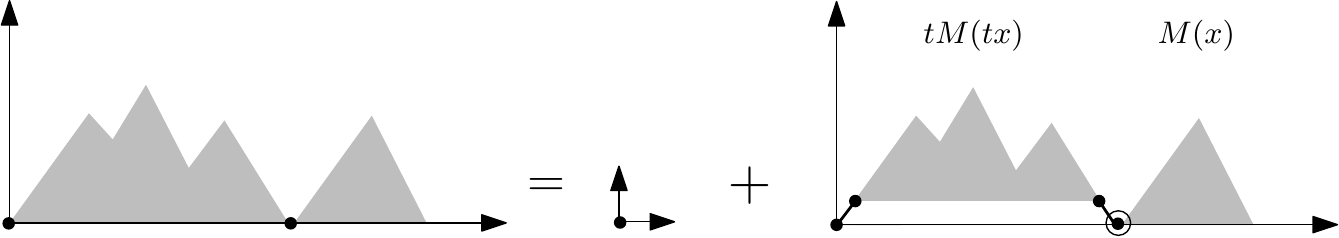}
\end{center}

\smallskip
We also point out that for all the subfamilies $\mathcal{F}^*_n$ of $\genset_n$ that will naturally appear later in our enumerations, there will be some equations of the form~\eqref{lienDyck} relating $F^*(x)$, $F(x)$ and $M(x)$, and recursive relations like~\eqref{eqfunccat} for $F(x)$.

%%%%%%%%%%%%%%%%%%%%%%%
\subsection{Bijective encoding of alternating heaps}\label{subsec:walks}
%%%%%%%%%%%%%%%%%%%%%%%

We now define a bijective encoding of self-dual alternating heaps by walks, which will be especially handy to compute generating functions in the next sections. It is obtained by understanding how the encoding of heaps defined in~\cite{BJN} restricts to self-dual heaps.

\begin{proposition}
\label{proposition:walk_encoding}
Let $H$ be a self-dual alternating heap of type $\Gamma_n$. To each vertex  $v_i$ of $\Gamma_n$ we associate the point $P_i=(i,|H_{v_i}|)$.  We define $\varphi(H)$ as the walk $(P_0,P_1,\ldots,P_n)$. Then the map $H\mapsto \varphi(H)$ is a bijection between self-dual alternating heaps of type $\Gamma_n$ and $\genset_n$. The size $|H|$ of the heap is the total height of $\varphi(H)$.
\end{proposition}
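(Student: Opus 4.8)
The plan is to verify that $\varphi$ is well-defined (lands in $\genset_n$), then produce an explicit inverse, from which the size-to-height statement falls out immediately.

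First I would check that $\varphi(H)$ is a genuine walk in $\genset_n$. By construction $P_0=(0,|H_{v_0}|)$ has abscissa $0$, and each $P_i$ has nonnegative integer ordinate $|H_{v_i}|$, so the points lie in $\mathbb{N}^2$. The content of this step is to control the jumps $|H_{v_{i+1}}|-|H_{v_i}|$. The alternating condition (Definition~\ref{defi:alternating}) says that along the edge $\{v_i,v_{i+1}\}$ the chain $H_{\{v_i,v_{i+1}\}}$ has strictly alternating labels; hence the two column heights $|H_{v_i}|$ and $|H_{v_{i+1}}|$ differ by at most $1$, giving a step in $\{(1,1),(1,-1),(1,0)\}$. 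The last thing to rule out is a horizontal step $(1,0)$ occurring above the $x$-axis. Here is where self-duality enters: I would argue that if $|H_{v_i}|=|H_{v_{i+1}}|=k>0$, then the alternating chain of length $2k$ on $\{v_i,v_{i+1}\}$ is forced by self-duality to have a specific parity of first/last label that is incompatible with both columns being nonempty and equal --- equivalently, the dual reverses the label order on this chain, and matching it with the original forces $k=0$ whenever the heights coincide. This is the key local computation and I expect it to be the main obstacle: one must carefully track how the poset order on the two-element-label chain interacts with the order-reversing duality to see that equal positive heights cannot be realized by a self-dual alternating configuration.

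Next I would construct the inverse map. Given a walk $P=(P_0,\ldots,P_n)\in\genset_n$ with $P_i=(i,h_i)$, I build a heap $H$ on $\Gamma_n$ by declaring $|H_{v_i}|=h_i$ for each $i$, i.e.\ column $i$ contains exactly $h_i$ elements labeled $v_i$. The poset structure is then the transitive closure of the chains $H_{v_i}$ together with the interleaving on each edge $H_{\{v_i,v_{i+1}\}}$; the step constraints ($|h_{i+1}-h_i|\le 1$, with equality to $0$ only on the axis) guarantee that there is a \emph{unique} way to interleave the two columns into an alternating chain, so the heap is well-defined and alternating. I would then verify this reconstructed heap is self-dual: because each adjacent interleaving is symmetric under reversal (the alternating pattern of two nearly-equal columns is palindromic as a sequence of labels), the global order-reversal yields an isomorphic labeled poset. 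Checking that $\varphi$ and this reconstruction are mutually inverse is then routine, since both only record, and are determined by, the column heights $h_i=|H_{v_i}|$ together with the canonical alternating interleaving.

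Finally, the size statement is immediate: $|H|=\sum_{i=0}^n |H_{v_i}|=\sum_{i=0}^n h_i=\haut(P)$, which is exactly the total height of $\varphi(H)$. The only genuinely delicate point throughout is the no-horizontal-step-off-the-axis claim together with its converse (unique interleaving), and I would isolate it as a short lemma about self-dual alternating chains on a single edge before assembling the bijection.
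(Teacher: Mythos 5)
Your proposal is correct and follows essentially the same route as the paper's proof: the forward map is well defined because the alternating condition bounds the height differences and self-duality forbids equal positive adjacent heights, and the inverse is built by the unique alternating interleaving of adjacent columns, with the size-equals-height statement immediate. In fact you spell out two points the paper only asserts — the parity argument showing an even-length alternating edge-chain cannot be self-dual, and the palindromic-chain argument that the reconstructed heap is self-dual — so your write-up is, if anything, slightly more complete than the original.
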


\begin{proof}
For any alternating heap $H$ and any $i\in\{0,\dots,n-1\}$, we have $-1 \leq |H_{v_i}| - |H_{v_{i+1}}| \leq 1$. Moreover if $H$ is self-dual and $|H_{v_i}|\neq 0$, then $|H_{v_i}| \neq |H_{v_{i+1}}|$. Therefore the map is well defined. Fix $(P_0,P_1,\ldots,P_n) \in \mathcal{G}_n^*$. If the step $P_i=(i,h_i) \rightarrow P_{i+1}=(i+1,h_{i+1})$ is equal to $(1,1)$ (\emph{resp.} $(1,-1)$), then we define a convex chain $\mathcal{C}_i$ of length $2h_{i}$ (\emph{resp.} $2h_{i+1}$) as $(v_{i+1},v_{i},\ldots, v_{i+1})$  (\emph{resp.} $(v_i,v_{i+1},\ldots, v_i)$).
If the step $P_i\rightarrow P_{i+1}$ is $(1,0)$, then $h_i=h_{i+1}=0$, and we let $\mathcal{C}_i$ be the empty chain. Next we define $H$ as the transitive closure of the chains $\mathcal{C}_0, \ldots,\mathcal{C}_{n-1}$. It is acyclic since $\Gamma_n$ is linear, so we get a heap $H$ which is uniquely defined, alternating, and satisfies $\varphi(H)=(P_0,P_1,\ldots,P_n)$. Since $h_i=|H_{v_i}|$ the result follows.
\end{proof}

\begin{figure}[h!]
\begin{center}
\includegraphics[width=11cm]{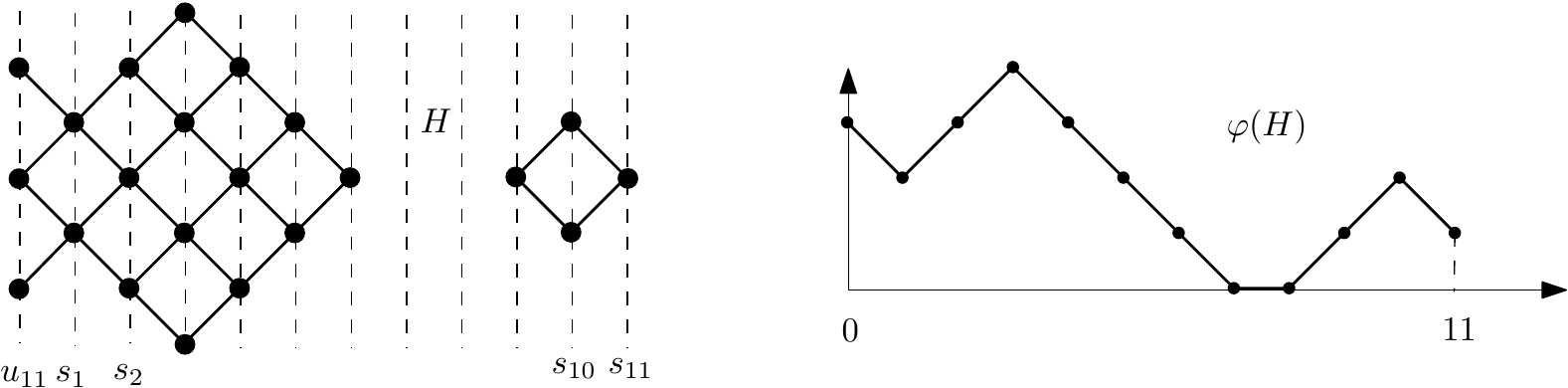}
\caption{\label{fig:C_invol_path} The heap of a FC involution in $\widetilde{C}_{11}$ and its associated walk.}
\end{center}
\end{figure}

\begin{remark}\label{rk:ajouterunpas}
In the type $A_{n-1}$ case, if we consider alternating FC heaps, since they have either zero or one occurrence of the labels $s_1$ and $s_{n-1}$, we can add an initial and a final step to the paths obtained through Proposition~\ref{proposition:walk_encoding}, to get a bijection with paths in $\catset^*_n$. Similarly, since alternating FC heaps on $B_n$ are in bijection with paths in $\genset_{n-1}$ starting at height zero or one, we can add an extra initial step, to obtain a bijection with paths in $\catsuffixset^*_n$.
\end{remark}

%%%%%%%%%%%%%%%%%%%%%%%%%%%%
\section{Enumeration with respect to the major index}\label{sec:major}
%%%%%%%%%%%%%%%%%%%%%%%%%%%%

Recall the following enumerative expressions for the number of FC involutions in classical finite types, which were first proved in~\cite[Section 4]{St3}.

\begin{proposition}\label{coro:enumeration}
We have for $n\geq1$
\begin{eqnarray}
|\bar{A}_{n-1}^{FC}|&=&\bi{n}{\lfloor n/2 \rfloor}\label{cardA},\\
|\bar{B}_{n}^{FC}|&=&2^n+\bi{n}{\lfloor n/2 \rfloor}-1\label{cardB},\\
|\bar{D}_{n+1}^{FC}|&=&\left\{\begin{matrix} \displaystyle 2^n+\bi{n+1}{n/2}-1\;\;\;\mbox{if $n$ even},\\ 
\\
\displaystyle2^n+\frac{3}{2}\bi{n+1}{ (n+1)/2 }-1\;\;\;\mbox{if $n$ odd.} \end{matrix}\right.\label{cardD}
\end{eqnarray}
\end{proposition}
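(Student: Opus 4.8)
The plan is to feed the classification of Proposition~\ref{prop:typeclassiques} into the walk encodings of Proposition~\ref{proposition:walk_encoding} and Remark~\ref{rk:ajouterunpas}. Since the total height of a walk equals the size of the corresponding heap, which equals the Coxeter length $\ell(w)$, counting FC involutions amounts to counting the associated walks (formally, setting $t=1$ in the series $F_n^*(t)$). Thus each identity reduces to enumerating an explicit family of lattice walks, and the three formulas will be obtained by splitting, in types $B$ and $D$, into self-dual alternating heaps and self-dual right-peaks.

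For type $A$, Remark~\ref{rk:ajouterunpas} identifies the FC involutions of $A_{n-1}$ with $\catset^*_n$, and it remains to see that $|\catset^*_n|=\bi{n}{\lfloor n/2\rfloor}$. I would get this either by extracting $[x^n]\cat^*(x)$ at $t=1$ from \eqref{lienDyck}, or from the classical identification of $\bi{n}{\lfloor n/2\rfloor}$ with the number of length-$n$ nonnegative lattice paths; this gives \eqref{cardA}.

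For type $B$, the self-dual alternating heaps correspond by Remark~\ref{rk:ajouterunpas} to $\catsuffixset^*_n$, which I would count via the bijection sending a walk to the binary word recording, step by step, whether it rises (the walk is reconstructed greedily from the word): this forces $|\catsuffixset^*_n|=2^n$. For the self-dual right-peaks I would exploit the uniqueness of the index $j$ (Remark~\ref{rem:familyPeaks}): once the peak $s_j\cdots s_ns_{n-1}\cdots s_j$ is fixed, conditions (b)--(c) say precisely that the part below it is a self-dual alternating FC heap on $\{s_1,\dots,s_j\}$ with a single $s_j$, which Proposition~\ref{proposition:walk_encoding} encodes by a length-$j$ walk from the origin ending at height $1$. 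Writing $W_m^{(k)}$ for the number of length-$m$ origin walks ending at height $k$, the right-peaks number $\sum_{j=1}^{n-1}W_j^{(1)}$. Decomposing a path of $\catset^*_m$ by its last step gives $|\catset^*_m|=|\catset^*_{m-1}|+W_{m-1}^{(1)}$, whence $W_{m-1}^{(1)}=\bi{m}{\lfloor m/2\rfloor}-\bi{m-1}{\lfloor(m-1)/2\rfloor}$ and the sum telescopes to $\bi{n}{\lfloor n/2\rfloor}-1$; adding $2^n$ yields \eqref{cardB}. The one point requiring care is that the map $(j,\text{base})\mapsto\text{right-peak}$ be a genuine bijection, i.e.\ that junctions creating a non-FC factor such as $s_1s_2s_1$ are discarded, so that no heap is counted twice.

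For type $D$, the right-peaks are treated exactly as in type $B$ (the base below the peak is again a self-dual alternating FC heap on $\{s_1,\dots,s_j\}$), so they still number $\bi{n}{\lfloor n/2\rfloor}-1$. The delicate part is the self-dual alternating heaps: by their definition they arise from the self-dual alternating heaps of type $B_n$ with zero or an odd number of labels $s_n$, with one image when there is no $s_n$, two images (the two self-dual alternations of $s_n,s_{n+1}$) for an odd number $\ge 3$, and three images (the extra antichain $s_ns_{n+1}$) for exactly one $s_n$. Since the number of $s_n$'s is the final height of the associated walk, this gives a count $b_0+b_1+2b_{\mathrm{odd}}$, where $b_0=\bi{n}{\lfloor n/2\rfloor}$ counts the walks ending at height $0$ (equivalently the FC involutions of $A_{n-1}$), $b_1=\bi{n+1}{\lfloor(n+1)/2\rfloor}-\bi{n}{\lfloor n/2\rfloor}$ those ending at height $1$, and $b_{\mathrm{odd}}$ those ending at odd height. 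I would evaluate $b_{\mathrm{odd}}$ through the signed sum $\sum_k(-1)^kW_n^{(k)}$, which equals $\bi{n}{n/2}$ for $n$ even and $0$ for $n$ odd, giving $b_{\mathrm{odd}}=(2^n-\bi{n}{n/2})/2$ or $2^{n-1}$ respectively. Adding the right-peak count and simplifying (using $\bi{n+1}{(n+1)/2}=2\bi{n}{(n-1)/2}$ when $n$ is odd) produces the two cases of \eqref{cardD}. The main obstacle is precisely this type-$D$ parity bookkeeping together with the correct multiplicities of the fork replacement.
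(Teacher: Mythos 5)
Your proof is correct, but it is not the paper's: the paper gives no proof of this proposition at all, recalling the three formulas from Stembridge~\cite[Section 4]{St3}, whose original derivation does not use the heap/walk machinery developed here. What you do instead is essentially the $t=1$ (or $q=1$) shadow of the arguments the authors use for their refined results: your decomposition into self-dual alternating heaps and right-peaks, encoded respectively by walks in $\catsuffixset^*_n$ or $\catset^*_n$ and by pairs (index $j$, base walk of length $j$ ending at height $1$), is precisely the decomposition in the proofs of Propositions~\ref{prop:majorB}, \ref{prop:majorD} and~\ref{prop:involutions}; indeed setting $q=1$ in \eqref{cmajA}--\eqref{cmajD}, or $t=1$ and extracting $[x^n]$ in \eqref{typeAinv}--\eqref{typeDinv}, recovers \eqref{cardA}--\eqref{cardD}. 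Your elementary evaluations all check out: $|\catsuffixset^*_n|=2^n$ via the rise-word bijection; the last-step recursion $|\catset^*_m|=|\catset^*_{m-1}|+W^{(1)}_{m-1}$ and telescoping give the right-peak count $\bi{n}{\lfloor n/2\rfloor}-1$; the type-$D$ multiplicities $1/3/2$ according to final height $0$, $1$, or odd $\geq 3$ agree with the paper's own case analysis for type $D$; and the parity evaluation you assert is true, though it deserves a line of proof, e.g.\ the step recurrences yield $S_n:=\sum_k(-1)^kW^{(k)}_n=2\bigl(W^{(0)}_{n-1}-S_{n-1}\bigr)$, whence $S_n=\bi{n}{n/2}$ for $n$ even and $S_n=0$ for $n$ odd by induction. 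Concerning the one worry you flag (double counting): the two families in Proposition~\ref{prop:typeclassiques} are disjoint as the paper counts them, because the alternating family is encoded by walks starting at height $\leq 1$ (i.e.\ heaps with $|H_{s_1}|\leq 1$), whereas a right-peak has either $|H_{s_1}|=2$ (case $j=1$) or a non-alternating chain $H_{\{s_{j-1},s_j\}}$ containing two consecutive $s_j$'s (case $j\geq 2$); moreover gluing the unique peak onto an admissible base always produces an FC heap, so nothing needs to be ``discarded''. In sum, your route makes the statement self-contained inside the paper's framework at the level of cardinalities, while the paper's route (citation to \cite{St3}, with the $q$- and $t$-refinements proved later subsuming these formulas) avoids redundancy at the cost of leaving this proposition resting on an external reference.
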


The striking observation is that by considering the major index defined in Section~\ref{sec:FCihw} we can obtain nice $q$-analogues for formulas \eqref{cardA}--\eqref{cardD}. For type $A$ this analogue was recently computed by Barnabei \emph{et al.} in~\cite{BBES}, by using the $321$-avoiding characterization of FC involutions, the Robinson--Schensted algorithm, and a connection with integer partitions. A non-bijective proof of this result, using the principal specialization of Schur functions, has also been given by Stanley,  and a third one has been found by Dahlberg and Sagan (see~\cite[\S6]{BBES}).

In this section, we show that our different approach in terms of heaps will allow us to derive this result and extend it to types $B$ and $D$. In type $A$, we will shortly explain in Proposition~\ref{remark:rsk} below that our approach and the one of Barnabei \emph{et al.} are equivalent, and yield the same connection to integer partitions. In types $B$ and $D$, we will still use our characterization in terms of heaps and the idea from~\cite{BBES} to connect walks to integer partitions; here the approach through pattern avoidance in signed permutations seems harder to use.
\smallskip

Before giving our result, we recall a classical property relating $q$-binomial coefficients and integer partitions. A partition of the integer $N$ is a nonincreasing sequence $\lambda=(\lambda_1\geq\dots\geq\lambda_l>0)$ of positive integers whose sum is equal to $N$, which we denote by $|\lambda|$ and call the weight of $\lambda$. The integer $l$ is the length of $\lambda$, while the $\lambda_i$'s are called the parts of $\lambda$. It will be convenient to use {\em Frobenius notations} for partitions. Let $j$ be the {\em rank} of a partition $\lambda$, namely the largest integer $i$ such that $\lambda_i\geq i$: it is also the size of the \emph{Durfee square} of $\lambda$. For $1\leq i \leq j$, define $\alpha_i=\lambda_i-i$ and $\beta_i=\lambda'_i-i$. The Frobenius notation for $\lambda$ is the array
$$\left(\begin{matrix} \displaystyle \alpha_1 & \alpha_2 & \dots & \alpha_j\\ 
\displaystyle \beta_1& \beta_2 & \dots & \beta_j \end{matrix}\right).$$

We have the following classical generating function (see for instance~\cite{Andrews}).
\begin{lemma}\label{lem:partitions}
The generating function in the variable $q$, according to the  weight, of integer partitions with largest part smaller or equal to $k$ and length smaller or equal to $n-k$, is given by the $q$-binomial coefficient
$$\qbi{n}{k}{q}:=\frac{(1-q)\cdots(1-q^n)}{(1-q)\cdots(1-q^k)(1-q)\cdots(1-q^{n-k})}.$$
\end{lemma}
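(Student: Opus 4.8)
The statement to prove is Lemma~\ref{lem:partitions}, the classical identity expressing the generating function for partitions fitting inside a $k \times (n-k)$ box as a $q$-binomial coefficient. The plan is to establish this by a standard recursive argument on the Gaussian binomial coefficient, matched against a combinatorial decomposition of the partitions.

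First I would set up notation: let $p(k, n-k; q)$ denote the generating function $\sum_\lambda q^{|\lambda|}$ where $\lambda$ ranges over partitions with largest part at most $k$ and length at most $n-k$, i.e.\ partitions fitting inside a box of width $k$ and height $n-k$. The goal is to show $p(k,n-k;q) = \qbi{n}{k}{q}$. The natural approach is induction, proving that both sides satisfy the same recurrence with the same initial conditions. For the combinatorial side, I would classify a partition $\lambda$ inside the $k \times (n-k)$ box according to whether its first part $\lambda_1$ equals $k$ or is at most $k-1$. If $\lambda_1 \le k-1$, then $\lambda$ fits inside a $(k-1) \times (n-k)$ box, contributing $p(k-1, n-k; q)$. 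If $\lambda_1 = k$, then removing the first full row of $k$ cells leaves a partition inside a $k \times (n-k-1)$ box, and this bijection shifts the weight by $k$, contributing $q^k\, p(k, n-k-1; q)$. This yields the recurrence
\[
p(k,n-k;q) = p(k-1,n-k;q) + q^k\, p(k,n-k-1;q).
\]

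On the algebraic side, I would invoke (or quickly verify from the definition in the statement) the Pascal-type recurrence for Gaussian binomials, namely
\[
\qbi{n}{k}{q} = \qbi{n-1}{k-1}{q} + q^k\,\qbi{n-1}{k}{q},
\]
which follows by a direct manipulation of the product formula given in the lemma. Writing the two indices of the box as $a=k$ and $b=n-k$, one checks that the combinatorial recurrence above matches this algebraic one under the correspondence $p(a,b;q) = \qbi{a+b}{a}{q}$, since decreasing $k$ to $k-1$ decreases $n$ to $n-1$ while keeping $n-k$ fixed, and decreasing $n-k$ to $n-k-1$ decreases $n$ to $n-1$ while keeping $k$ fixed. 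The base cases are immediate: when $k=0$ or $n-k=0$ the only partition is the empty one, of weight $0$, giving generating function $1 = \qbi{n}{0}{q} = \qbi{n}{n}{q}$.

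The argument is almost entirely routine, so there is no serious obstacle; the only point requiring care is bookkeeping the two-variable induction so that the combinatorial and algebraic recurrences are aligned under the substitution $n = a+b$, and confirming that the removal-of-the-first-row bijection is genuinely weight-shifting by exactly $k$. I would mention that an alternative, equally standard proof reads off the identity directly by summing $q^{|\lambda|}$ over a lattice-path description of the box partitions, but the inductive matching of recurrences is the cleanest self-contained route and is the one I would present.
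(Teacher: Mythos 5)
Your proof is correct. Note, however, that the paper does not prove this lemma at all: it is stated as a classical fact with a citation to Andrews' book on partitions, so there is no "paper proof" to compare against. Your self-contained argument is the standard textbook one, and all the details check out: the decomposition of a partition in the $k\times(n-k)$ box according to whether $\lambda_1=k$ or $\lambda_1\le k-1$ gives the recurrence $p(k,n-k;q)=p(k-1,n-k;q)+q^k\,p(k,n-k-1;q)$, and this matches the Pascal-type identity
$$\qbi{n}{k}{q}=\qbi{n-1}{k-1}{q}+q^k\qbi{n-1}{k}{q}$$
under $p(a,b;q)=\qbi{a+b}{a}{q}$, with the correct base cases $p(0,b;q)=p(a,0;q)=1$. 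The only mild caveat is that you should actually carry out the "direct manipulation of the product formula" verifying the Pascal identity (a two-line computation pulling out the factor $1-q^n = (1-q^{n-k}) + q^{n-k}(1-q^{k})$, say), since that is the one algebraic step you invoke rather than prove; once that is written down, the argument is complete.
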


\subsection{Type $A$}
We  prove the following result, which can be found in~\cite{BBES}.
\begin{proposition}\label{prop:majorA}
For any positive integer $n$, we have
\begin{equation}\label{cmajA}
\sum_{w\in \bar{A}_{n-1}^{FC}}q^{\maj(w)}=\qbi{n}{\lfloor n/2 \rfloor}{q}.
\end{equation}
\end{proposition}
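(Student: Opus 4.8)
The plan is to interpret both sides of \eqref{cmajA} as generating functions for combinatorial objects and exhibit a weight-preserving bijection. By Proposition~\ref{prop:typeclassiques}, the FC involutions in $A_{n-1}$ are in bijection with self-dual alternating heaps on $\Gamma_n$, and by Proposition~\ref{proposition:walk_encoding} together with Remark~\ref{rk:ajouterunpas}, these heaps are encoded by the walks in $\catset^*_n$ (paths from $(0,0)$ to $(n,0)$ with unit up/down steps and horizontal steps only on the $x$-axis). The right-hand side $\qbi{n}{\lfloor n/2\rfloor}{q}$ is, by Lemma~\ref{lem:partitions}, the generating function by weight for integer partitions fitting inside a $\lfloor n/2\rfloor \times \lceil n/2\rceil$ box. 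So the statement reduces to: the major index statistic, read off from the heap (equivalently its walk), matches the weight of an associated partition fitting in that box.

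First I would make the major index explicit on heaps. For a FC element $w\in A_{n-1}$ with heap $\H(w)$, a generator $s_i$ lies in $\mathrm{Des}_R(w)$ precisely when $s_i$ labels a maximal element of the poset; since the heap is alternating and self-dual, the descent set is determined by the tops of the columns, and $\maj(w)=\sum_{i:\,s_i\in\mathrm{Des}_R(w)} i$ becomes a sum of column indices. Translating through $\varphi$, the descents correspond to specific local features of the walk (a column $v_i$ contributes $i$ to the major index exactly when the associated step configuration makes $s_i$ a top element). The key technical lemma will be to rewrite $\maj$ in terms of the walk's step sequence, after which I would set up the correspondence between these walks and lattice paths/partitions.

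Next I would connect the walks to partitions in the spirit of~\cite{BBES}. A Dyck-type path from $(0,0)$ to $(n,0)$ bounds a region above the $x$-axis; reading the boundary as a sequence of up and down steps (a sequence of $n$ steps, of which the ups and downs are balanced) gives a lattice path in an $\lceil n/2\rceil \times \lfloor n/2\rfloor$ rectangle whose enclosed cells form a partition inside that box — this is the standard bijection whose generating function is the $q$-binomial coefficient. The crux is to verify that under this correspondence the \emph{weight} of the partition equals $\maj(w)$; I would do this by showing both statistics satisfy the same local increments as one reads the path step by step (each step either adds a fixed amount to the partition weight and to the major index, or to neither). Handling the horizontal steps on the axis and the parity of $n$ (which governs whether the box is square or not) requires care, and I expect \textbf{matching the major index to the partition weight} to be the main obstacle: the naive column-index sum $\sum i$ does not literally equal the cell count of the partition, so I anticipate needing either a telescoping/summation-by-parts identity or a clever reindexing of the descents to line the two statistics up exactly. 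Once that identity is established, invoking Lemma~\ref{lem:partitions} gives \eqref{cmajA} immediately.

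As an alternative, should the direct bijection prove delicate, I would instead compute $\sum_{w} q^{\maj(w)}$ via the functional-equation machinery set up in Section~\ref{subsec:walks}: refine the series $\cat(x)$ and $\cat^*(x)$ to carry a $q$-weight recording the major index (rather than the total height), derive a $q$-analogue of \eqref{eqfunccat} by first-return decomposition, and extract the coefficient to recognize the $q$-binomial. This route avoids pinning down the partition bijection explicitly but trades it for a generating-function computation; I would keep it in reserve and favor the bijective argument, since the paper explicitly advertises the connection to integer partitions as the conceptual payoff.
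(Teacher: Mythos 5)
Your setup is exactly the paper's (FC involutions $\to$ self-dual alternating heaps $\to$ walks in $\catset^*_n$, descents $=$ maximal elements $=$ peaks, target $=$ partitions in a box via Lemma~\ref{lem:partitions}), but the proof has a genuine gap precisely at the point you yourself flag as ``the main obstacle'': you never supply the mechanism that turns $\maj(w)$ into the weight of a partition. Worse, the route you propose to try first --- the \emph{standard} bijection in which the partition is the region enclosed between the lattice path and the corner of the $\lceil n/2\rceil\times\lfloor n/2\rfloor$ box --- cannot work as stated, because under that bijection the partition weight is an area-type statistic, not the sum of descent positions; these two statistics are equidistributed but not equal walk-by-walk, and proving their equidistribution is the whole content of the proposition. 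So the ``local increments'' argument you sketch would fail: a single step of the walk does not change $\maj$ and the enclosed area by the same amount.

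The paper's resolution, which is the idea missing from your proposal, runs as follows. First the horizontal steps are removed by a specific recipe (the first $\lfloor k/2\rfloor$ become down steps, the last $\lceil k/2\rceil$ become up steps), and then down steps become horizontal unit steps and up steps become vertical ones, giving a monotone path $\bar\gamma$ from the origin to $(\lfloor n/2\rfloor,\lceil n/2\rceil)$. Under this transformation a descent at position $i$ of $w$ corresponds to a \emph{corner point} $P(a,b)$ of $\bar\gamma$ (vertical step followed by horizontal step) with $a+b=i$. The key trick is then to define the partition \emph{not} as the region cut out by $\bar\gamma$, but through its Frobenius symbol: if the corners are $P_1(a_1,b_1),\dots,P_j(a_j,b_j)$, one sets
\begin{equation*}
\lambda_{\bar{\gamma}}=\left(\begin{matrix} a_1&a_2& \dots & a_j\\ b_1-1&b_2-1&\dots&b_j-1 \end{matrix}\right),
\end{equation*}
so that $|\lambda_{\bar\gamma}|=\sum_i\bigl(a_i+(b_i-1)+1\bigr)=\sum_i(a_i+b_i)=\maj(w)$ holds \emph{exactly}, with no telescoping needed, and the inequalities $\lfloor n/2\rfloor>a_1>\dots>a_j\geq 0$, $\lceil n/2\rceil\geq b_1>\dots>b_j>0$ translate into $\lambda_1\leq\lfloor n/2\rfloor$ and length $\leq\lceil n/2\rceil$, i.e.\ the box of Lemma~\ref{lem:partitions}. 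This Frobenius-coordinate definition is the ``clever reindexing'' you anticipated but left open; without it your main route is incomplete. Your fallback route (a $q$-analogue of \eqref{eqfunccat} tracking $\maj$) also hides a real difficulty: $\maj$ is a sum of \emph{absolute} positions, so it does not decompose under the first-return factorization the way the total height does, and the functional equation would need a nontrivial bivariate refinement.
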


\begin{proof}
Recall from Proposition~\ref{prop:typeclassiques} that each FC involution $w \in \bar{A}^{FC}_{n-1}$ corresponds bijectively to a self-dual alternating heap $H=\H(w)$, which is itself in one-to-one correspondence with a walk in $\catset^*_n$, thanks to Proposition~\ref{proposition:walk_encoding} and Remark~\ref{rk:ajouterunpas}. Note that 
$i \in {\rm Des}_R(w)$ if and only if there exists in the poset $H$ a maximum element labeled $s_i$. In the path encoding this corresponds to a {\em peak} (i.e. an ascending step followed by a descending one) at coordinates
$(i,|H_{s_i}|)$. Therefore the position $i$ of such a descent is equal to the number of steps from the origin to this peak. 

Fix a walk $\gamma \in \catset^*_n$ and let $k$ be its number of horizontal steps. Note that  $k$ and $n$ have the same parity. Replace  the first $\lfloor k/2 \rfloor$ (\emph{resp.} the last $\lceil k/2 \rceil$)  horizontal steps by  descending (\emph{resp.} ascending) ones. Next replace  all descending (\emph{resp.} ascending) steps by horizontal (\emph{resp.} vertical) ones, yielding $\bar{\gamma}$ in $\Pi_n$,  the set of walks with $n$ horizontal or vertical steps, starting at the origin and ending at $(\lfloor n/2 \rfloor,\lceil n/2 \rceil)$. These operations are invertible, and an example of the bijection $\gamma\to\bar{\gamma}$ is provided in Figure~\ref{fig:bijection_typeA}.

\begin{figure}[h!]
\begin{center}
\includegraphics[width=12cm]{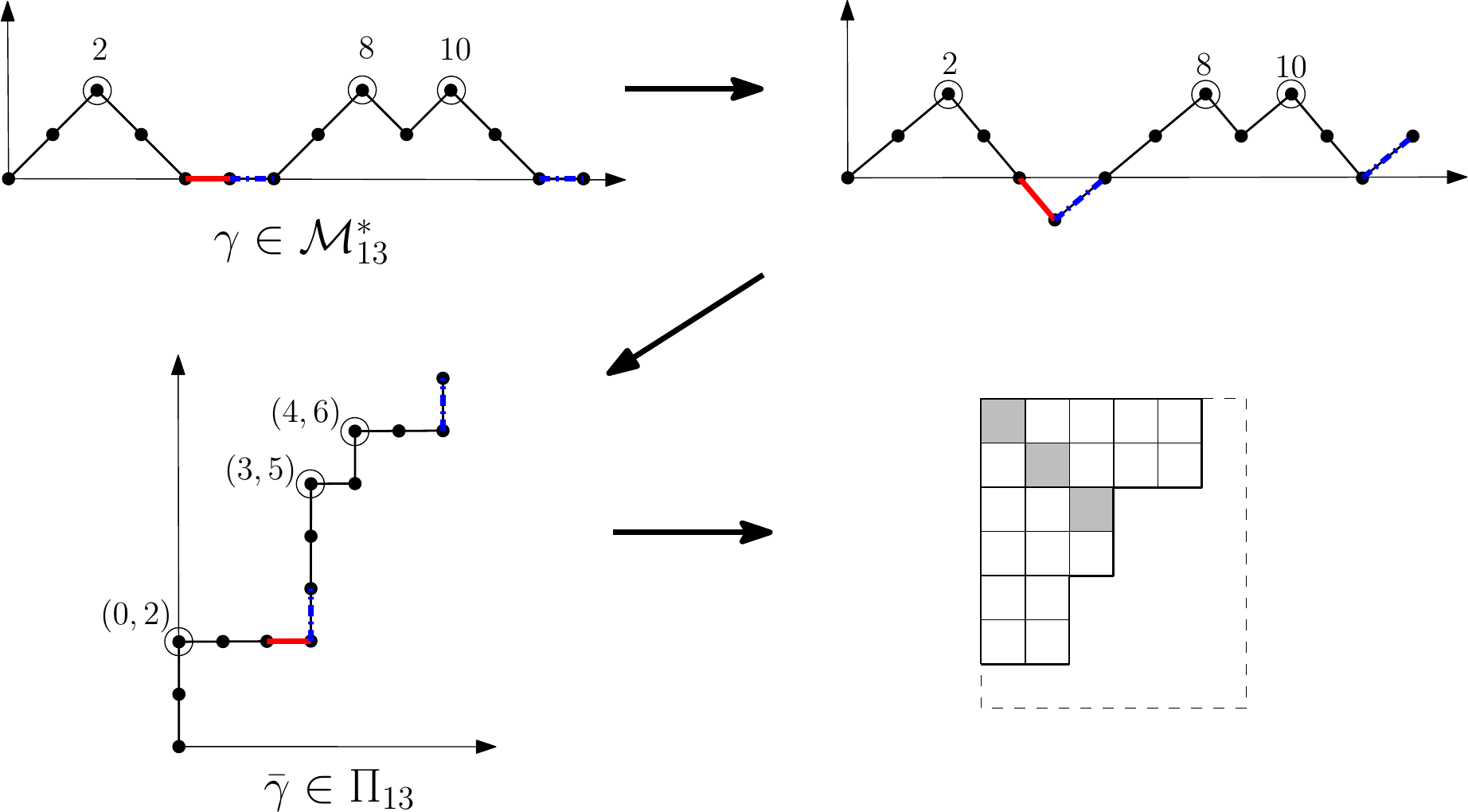}
\end{center}
\caption{An example of the walk-to-partition bijection.\label{fig:bijection_typeA}}
\end{figure}

It is not difficult to see that descents of $\gamma$ coincide in $\bar{\gamma}$ with {\em corner points} (formed by a vertical step followed by a horizontal one). Moreover the position of a descent in $\gamma$ is given by the sum of the coordinates of the corresponding corner point in $\bar{\gamma}$. 

Denote  by $P_1(a_1,b_1),\dots,P_j(a_j,b_j)$, with  $\lfloor n/2 \rfloor> a_1>\dots>a_j\geq 0$ and $\lceil n/2 \rceil \geq b_1>\dots>b_j> 0$ the corner points of $\bar{\gamma}$. Summarizing, we have
\begin{equation}\label{eq:sum_maj}
\sum_{w\in \bar{A}_{n-1}^{FC}} q^{\maj(w)}=\sum_{\bar{\gamma} \in \Pi_n} q^{a_1+b_1+\cdots + a_j+b_j}.
\end{equation}

Since any walk $\bar{\gamma}\in \Pi_n$ is uniquely determined by its corner points, it can be associated with an integer partition $\lambda$  defined in Frobenius notation as
$$\lambda_{\bar{\gamma}}=\left(\begin{matrix} \displaystyle a_1&a_2& \dots & a_j\\ 
\displaystyle b_1-1&b_2-1&\dots&b_j-1 \end{matrix}\right).$$ This is a bijection: for a proof see ~\cite[Theorem 3.4]{BBES} to which we refer for more detail. As any such partition has  length~$\leq \lceil n/2 \rceil$ and greatest part~$\leq  \lfloor n/2 \rfloor$, we obtain~\eqref{cmajA} through \eqref{eq:sum_maj} and Lemma~\ref{lem:partitions}.
\end{proof}

By comparing our resulting bijection with that of~\cite{BBES}, we observed a striking coincidence.
Starting from a FC involution $w \in A_{n-1}$, we considered the following sequence of bijective transformations:
$$w \longrightarrow H\longrightarrow \gamma \in  \catset^*_n,$$
where $H=\H(w)$ and $\gamma=\varphi(H)$ (see Remark~\ref{rk:ajouterunpas}).
Barnabei \emph{et al.}~\cite{BBES} consider a different sequence of transformations, namely:
$$w \stackrel{RS}{\longrightarrow} T\longrightarrow \beta \in \catsuffixset_n,$$
where $T$ is the two rows standard tableau obtained by the Robinson--Schensted correspondence, and $\beta$ is the classical associated Dyck-path prefix. The following proposition shows that the two constructions essentially coincide.
\begin{proposition}\label{remark:rsk}
The walk $\beta$ is obtained from $\gamma$ by replacing in the latter all horizontal steps by ascending ones. 
\end{proposition}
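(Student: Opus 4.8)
The plan is to make both constructions completely explicit on the level of steps of the respective walks, and then simply read off that they produce the same sequence after the stated substitution. Since the statement is a coincidence of two bijections, the proof is essentially a bookkeeping argument, and the main work is to set up a common coordinate system in which both walks can be compared step by step.

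First I would recall precisely how $\gamma\in\catset^*_n$ is obtained from $w$. By Proposition~\ref{proposition:walk_encoding} and Remark~\ref{rk:ajouterunpas}, the $i$-th interior step of $\gamma$ records the difference $|H_{s_i}|-|H_{s_{i-1}}|$ of the column heights of the self-dual alternating heap $H=\H(w)$, with an initial and final step adjoined; an ascending step at position $i$ signals that the generator $s_i$ occurs "one more time" than $s_{i-1}$, a descending step signals one fewer, and a horizontal step (only on the $x$-axis) signals that neither $s_{i-1}$ nor $s_i$ occurs. In other words, the height $h_i$ of $\gamma$ after $i$ steps equals $|H_{s_i}|$, the number of times the generator $s_i$ is used in any reduced word for $w$.

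Next I would describe the Barnabei \emph{et al.} walk $\beta\in\catsuffixset_n$. Under the Robinson--Schensted correspondence, the $321$-avoiding involution $w$ maps to a pair $(T,T)$ with $T$ a standard Young tableau of at most two rows; $\beta$ is the Dyck prefix whose $i$-th step is ascending if $i$ lies in the first row of $T$ and descending if $i$ lies in the second row. The key identification is that, for a $321$-avoiding (hence FC) permutation of type $A$, the height of $\beta$ after $i$ steps, namely (number of entries $\le i$ in the first row) minus (number in the second row), again equals $|H_{s_i}|$. This is exactly the content of the correspondence between the two-row RS shape and the heap of a FC element in type $A$, and I would verify it by checking that both quantities satisfy the same recursion as $i$ increases: each increases by $1$ when $s_i$ is added at a fresh column (an ascent in $\beta$), and decreases by $1$ when adding $s_i$ completes a commuting pair reducing the column (a descent in $\beta$).

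Granting that both $\gamma$ and $\beta$ have the same height profile $h_i=|H_{s_i}|$ at every $i$, the two walks differ only in their step alphabet: $\gamma$ uses $\{(1,1),(1,-1),(1,0)\}$ while $\beta$ uses only $\{(1,1),(1,-1)\}$. They agree on every ascending and every descending step, and the only discrepancy occurs at horizontal steps of $\gamma$, which by Definition~\ref{defi:walks} occur precisely on the $x$-axis, i.e. where $h_i=h_{i+1}=0$. A horizontal step keeps the height at $0$; to keep the same height-$0$ profile while using only $\pm1$ steps, $\beta$ must replace each such horizontal step by an ascending step (giving height $1$) that is immediately cancelled — but since $\beta$ is merely a prefix of a Dyck path (not required to return to the axis), the clean statement is exactly the one in the proposition: replacing every horizontal step of $\gamma$ by an ascending step yields $\beta$. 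I would close by noting that this substitution is obviously invertible, so it matches the two bijections on the nose.

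The step I expect to be the main obstacle is establishing rigorously that the RS height profile of the two-row tableau coincides with the heap column-height profile $|H_{s_i}|$; this is the one genuinely nontrivial link, tying the Robinson--Schensted output to Viennot's heap, and it is where I would have to invoke (or reprove) the standard fact that for a $321$-avoiding involution the second row of $T$ records exactly the positions of the descents encoded by the maxima of $H$. Everything after that identification is purely formal step-matching.
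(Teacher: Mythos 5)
Your central claim is false, and the proposition cannot be deduced from it. You assert that the height of $\beta$ after $i$ steps equals $|H_{s_i}|$, i.e.\ that $\beta$ and $\gamma$ have the same height profile. Take $w$ to be the identity (or any FC involution with a fixed point): then $H$ is empty, $\gamma$ consists of $n$ horizontal steps staying at height $0$, while $T$ is a single row and $\beta$ consists of $n$ ascending steps, reaching height $n$. The correct relation is that the height of $\beta$ after $i$ steps equals $|H_{s_i}|$ \emph{plus the number of fixed points of $w$ that are at most $i$}; the two profiles agree only when $w$ is fixed-point-free. Your final paragraph is also internally inconsistent: if the two walks really had the same height profile, then at a horizontal step of $\gamma$ (height $0\to 0$) the walk $\beta$ would need a step from height $0$ to height $0$, which is impossible with steps $\pm 1$; the phrase ``an ascending step that is immediately cancelled'' does not resolve this, it merely restates the contradiction.

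A correct argument has to treat fixed points head-on, which is exactly what the paper does. First, $i$ is a fixed point of $w$ if and only if the $i$th step of $\gamma$ is horizontal (since $321$-avoidance forces the heap to split, with no occurrence of $s_{i-1}$ nor $s_i$), and if and only if the $i$th step of $\beta$ is an ascending step by which $\beta$ leaves a given height for the last time; this accounts precisely for the substitution in the statement and reduces the proposition to proving $\beta=\gamma$ when $w$ has no fixed points. In that case both walks are Dyck paths in $\catset_n$ governed by the same local rule: the $i$th step of $\gamma$ is ascending iff $w(i)>i$ (read off the heap via Viennot's column reading), and the $i$th step of $\beta$ is ascending iff $i$ lies in the first row of $T$, which by an induction on the RS insertion (an entry $j<i$ bumps $i$ from the first row iff $j=w(i)$) happens iff $w(i)>i$. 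The nontrivial link you flag at the end --- tying the RS output to the heap --- is indeed the crux, but the identity you propose to establish for it is the wrong one, so the bookkeeping that follows cannot be repaired as written.
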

\begin{proof}

 If $w(i)=i$, then $w(j)<i$ for all $j<i$ (equivalently, $w(j)>i$ for all $j>i$): otherwise $w(j)>i>j$ would be an occurrence of the pattern $321$. This shows that $H$ splits naturally into two heaps corresponding to elements smaller, \emph{resp.} larger than $i$. Moreover $H$ contains elements labeled neither by $s_{i-1}$ nor by $s_i$, which means the $i$th step of $\gamma$ being a horizontal step. In the RS construction, we also easily see that such a fixed point $i$ will correspond to an up step in $\beta$, where $\beta$ leaves at abscissa $i$ a certain height for the last time. 

Thus the statement of the proposition is proved if we can show that $\beta=\gamma$ whenever $w$ has no fixed points, which we now assume. In this case $\beta$ and $\gamma$ are Dyck paths in $\catset_n$, and their construction is illustrated in Figure~\ref{fig:RSK_et_Heaps}. 

\begin{figure}[!ht]
\begin{center}
\includegraphics[width=0.8\textwidth]{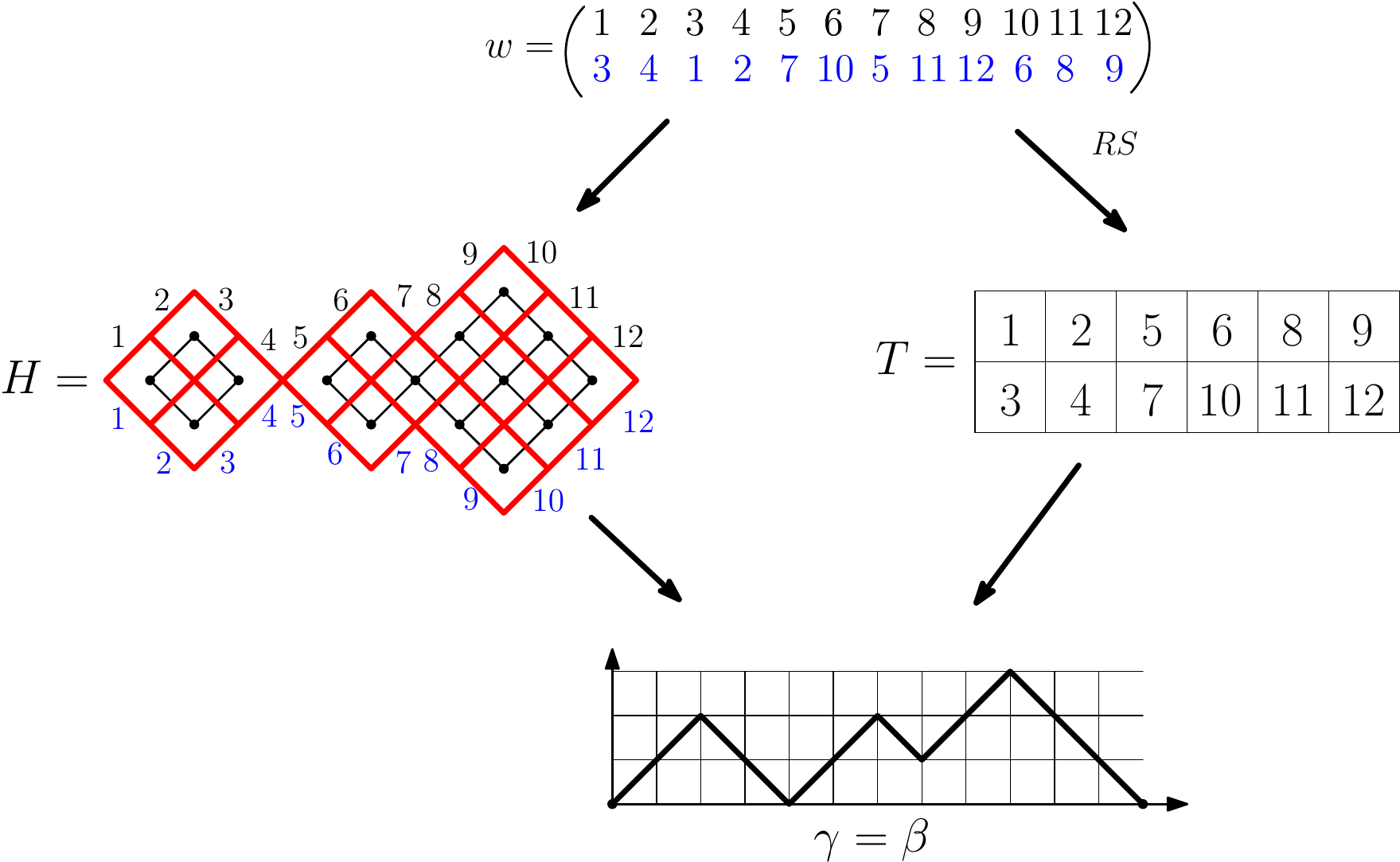}
\end{center}
\caption{Two constructions of the same correspondence.\label{fig:RSK_et_Heaps}}
\end{figure}

Note that we superimposed a cell on each vertex from the heap $H$. This is useful to read off the involution directly, see for instance~\cite{ViennotHeaps}. More precisely, pick any labeled edge, say $i$, on the top side, and follow the corresponding ``column'' of cells until the bottom side is reached: the label that can be read off is $w(i)$. 
By the construction of $\gamma$ from $w$, it is then clear that its $i$th step is an up step if and only if $w(i)>i$. 

On the other hand, the $i$th step in $\beta$ is an up step if and only if it appears in the top row of the tableau $T$ obtained from $w$. Now one can show by induction that, in the RS insertion algorithm, an entry $j<i$ will bump $i$ from the first row if and only if $j=w(i)$. Putting things together, the $i$th step in $\beta$ is an up step if and only if $w(i)>i$. Comparing with the previous paragraph, this shows that $\beta$ and $\gamma$ coincide, and concludes the proof.
\end{proof}

\subsection{Type $B$}
In type $B$, we have the following generalisation.
\begin{proposition}\label{prop:majorB}
For any positive integer $n$, we have
\begin{equation}\label{cmajB}
\sum_{w\in \bar{B}_{n}^{FC}}q^{\maj(w)}=\sum_{h=1}^nq^h\sum_{i=0}^{h-1}\qbi{h-1}{i}{q}+\qbi{n}{\lfloor n/2 \rfloor}{q}.
\end{equation}
\end{proposition}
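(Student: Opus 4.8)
The plan is to mimic the type $A$ argument, but now accounting for the richer structure of FC involutions in $B_n$. By Proposition~\ref{prop:typeclassiques}, such an involution $w$ corresponds either to a self-dual alternating heap of type $B_n$, or to a self-dual right-peak of type $B_n$. The first family should, via Proposition~\ref{proposition:walk_encoding} and the type $B$ part of Remark~\ref{rk:ajouterunpas}, be in bijection with walks in $\catsuffixset^*_n$ (paths starting at the origin with $n$ steps, not necessarily returning to the $x$-axis). I expect these to contribute the $q$-binomial term $\qbi{n}{\lfloor n/2 \rfloor}{q}$, essentially by repeating the walk-to-partition bijection of Proposition~\ref{prop:majorA}: the descents of $w$ again read off as peaks/corner points, their positions sum to $\maj(w)$, and the resulting partitions have the same length and largest-part constraints. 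The main point to verify here is that dropping the requirement that the path return to the axis (compared with $\catset^*_n$ in type $A$) does not change the partition statistics, so the same $q$-binomial appears.

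Next I would analyze the self-dual right-peaks, which I expect to produce the double sum $\sum_{h=1}^n q^h\sum_{i=0}^{h-1}\qbi{h-1}{i}{q}$. By Remark~\ref{rem:familyPeaks} the index $j$ in the definition of a right-peak is uniquely determined, so I can stratify this family by $j$, or better by the height $h=|H_{s_n}|$ appearing at the peak. The idea is that a right-peak decomposes into the ``peak part'' $H_{\{s_j,\dots,s_n\}}=\H(s_j\cdots s_{n-1}s_ns_{n-1}\cdots s_j)$ together with a self-dual alternating part $H_{\{s_1,\dots,\dot s_j\}}$ below it, subject to condition (b) at the junction. I would compute $\maj(w)$ as the descent $s_n$ contributing $n$ from the unique maximal element at the peak, plus the contributions from the alternating tail, and organize the sum so that $h$ records the relevant height parameter. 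The inner sum $\sum_{i=0}^{h-1}\qbi{h-1}{i}{q}$ strongly suggests that, after fixing $h$, the alternating tail is encoded by a partition fitting in a box of semiperimeter $h-1$ with a free choice of one boundary parameter $i$; this is exactly a sum of $q$-binomials over the size of a Durfee-type square, again via Lemma~\ref{lem:partitions}.

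Concretely, I would set up a walk-to-partition bijection for each subfamily: for the alternating heaps the argument of Proposition~\ref{prop:majorA} carries over almost verbatim; for the right-peaks I would first record the location $j$ and the peak height, translate the remaining alternating data into a partition through $\varphi$ and the Frobenius-notation correspondence, and track how each descent position enters $\maj$. The descent at the very top (the peak) accounts for the prefactor $q^h$, while the alternating remainder, living in a triangular region determined by $h$, contributes $\sum_{i=0}^{h-1}\qbi{h-1}{i}{q}$ after applying Lemma~\ref{lem:partitions} to partitions with largest part $\le i$ and length $\le h-1-i$ (summed over the Durfee parameter $i$). Summing over $h$ from $1$ to $n$ gives the first term, and adding the alternating-heap contribution $\qbi{n}{\lfloor n/2 \rfloor}{q}$ yields~\eqref{cmajB}.

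The main obstacle, I expect, is the bookkeeping at the junction between the peak and the alternating tail, governed by condition (b): I must check that the two cases $H_{\{s_{j-1},s_j\}}=s_js_j$ versus $s_{j-1}s_js_js_{j-1}$ (and the boundary case $j=1$) are correctly absorbed into the statistics, and in particular that the height parameter $h$ is defined so that exactly the partitions of semiperimeter $h-1$ arise, with no double counting across different $j$. Getting the precise ranges of the partition parameters to match the stated $q$-binomials, and confirming that each descent position is counted exactly once in $\maj(w)$ with the correct label index, is the delicate part; the rest follows from the already-established encoding and Lemma~\ref{lem:partitions}.
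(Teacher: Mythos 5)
Your overall strategy coincides with the paper's (split via Proposition~\ref{prop:typeclassiques}, encode each family by walks, convert walks to partitions via Lemma~\ref{lem:partitions}), but you have attached the two terms of~\eqref{cmajB} to the wrong families, and this swap is not a bookkeeping issue: it is false. A $q=1$ check already shows this. The self-dual alternating heaps are in bijection with $\catsuffixset^*_n$, and $|\catsuffixset^*_n|=2^n$ (from a point on the $x$-axis there are two admissible steps, $(1,0)$ or $(1,1)$, and from a point at positive height also two, $(1,1)$ or $(1,-1)$); so this family cannot have generating function $\qbi{n}{\lfloor n/2 \rfloor}{q}$, whose value at $q=1$ is $\bi{n}{\lfloor n/2 \rfloor}$. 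Dropping the return-to-axis condition emphatically does change the partition statistics: after the walk-to-partition translation, the endpoint of the walk becomes an extra descent whenever the last step is ascending, and the box constraint of type $A$ (largest part $\le\lfloor n/2\rfloor$, length $\le\lceil n/2\rceil$) is replaced by the weaker hook constraint $a_1+b_1\le n$ on the Frobenius coordinates. It is precisely this hook constraint that, upon peeling off the first part (setting $h=a_1+b_1$ and $i=b_1$), produces $1+\sum_{h=1}^nq^h\sum_{i=0}^{h-1}\qbi{h-1}{i}{q}$, whose value at $q=1$ is $2^n$, as it must be. So the alternating family gives the double sum (plus $1$), not the $q$-binomial.

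Symmetrically, your right-peak analysis starts from a false premise: in a right-peak, $s_n$ occurs exactly once, so $|H_{s_n}|=1$ and your stratification by $h=|H_{s_n}|$ is vacuous; moreover that single occurrence is covered by an element labeled $s_{n-1}$, since the chain $H_{\{s_j,\dots,s_n\}}$ is $s_j<\dots<s_{n-1}<s_n<s_{n-1}<\dots<s_j$. Hence $s_n$ is never a maximal element of the heap, so $s_n\notin{\rm Des}_R(w)$ and there is no prefactor $q^n$ (or $q^h$) coming from ``the peak'' --- the paper notes this explicitly. What the right-peaks actually give: gluing the walk of $H_{\{s_1,\dots,\dot s_j\}}$ (a path in $\catsuffixset^*_j$ ending at height $1$) to a descending step followed by $n-j-1$ horizontal steps is a bijection with $\catset^*_n$ minus the all-horizontal walk, with descents matching peaks exactly as in type $A$; their contribution is therefore $\qbi{n}{\lfloor n/2 \rfloor}{q}-1$, consistent with the count $\bi{n}{\lfloor n/2 \rfloor}-1$ of right-peaks. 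The case $n=2$ makes the swap concrete: $\bar{B}_2^{FC}$ contains four alternating involutions ($e$, $s_1$, $s_2$, $s_2s_1s_2$) and one right-peak ($s_1s_2s_1$), whereas your assignment would require $\bi{2}{1}=2$ and $2^2-1=3$ elements respectively. As written the proposal does not prove~\eqref{cmajB}; to repair it you must exchange the roles of the two families and then carry out the hook-constraint analysis for the alternating ones.
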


\begin{proof}
 By Proposition~\ref{prop:typeclassiques}, we have to inspect separately self-dual alternating heaps and right-peaks of type $B_n$. By definition right-peaks can be split for a $j\in\{1,\dots,n-1\}$ into two heaps $H_{\{s_1,\ldots, \dot{s}_{j}\}}$ and $H_{\{s_j,\ldots, s_n\}}$, and this decomposition is unique by  Remark~\ref{rem:familyPeaks}. These two parts can be bijectively reassembled by associating to the first one a walk in $\catsuffixset^*_{j}$ ending at height one, and to the second one a walk starting with a $(1,-1)$-step, followed by $n-j-1$ horizontal steps, and then gluing these two walks together. This yields a walk in $\catset^*_n$ with less than $n$ horizontal steps. For instance, the walk associated to the right-peak in Figure~\ref{fig:RP_typeB} is depicted below.

\begin{center}
\includegraphics[width=4cm]{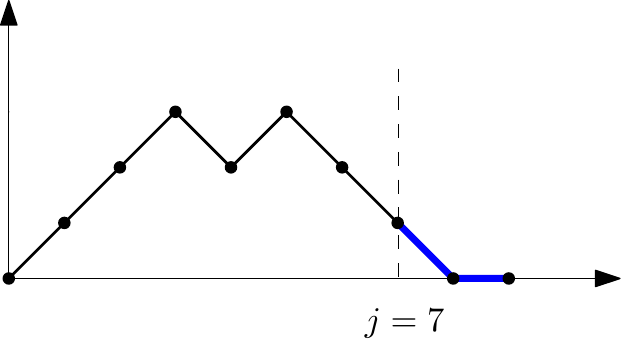}%\label{fig:chemin_peak_typeB}
\end{center}

We remark that if $w\in\bar{B}_n^{FC}$ corresponds to a right-peak, then $s_n \not\in {\rm Des_R(w)}$, so by applying the same arguments as in type $A_{n-1}$, we find that the generating function for those elements is equal to 
\begin{equation}\label{eq:LPmaj}
\qbi{n}{\lfloor n/2 \rfloor}{q}-1. 
\end{equation}

The remaining heaps are the self-dual alternating ones, which are by Proposition~\ref{proposition:walk_encoding} in one-to-one correspondence with walks in $\catsuffixset^*_n$. 

First, replace in these walks all $(1,0)$ steps by $(1,-1)$ steps. Secondly, replace as in type $A_{n-1}$ all descending (\emph{resp.} ascending) steps by horizontal (\emph{resp.} vertical) ones. We obtain this time a bijection with length $n$ walks with horizontal or vertical steps, starting at the origin but ending at any height, whose set is denoted by $\Pi'_n$, as can be seen on the illustration below.

\begin{center}
\includegraphics[width=9cm]{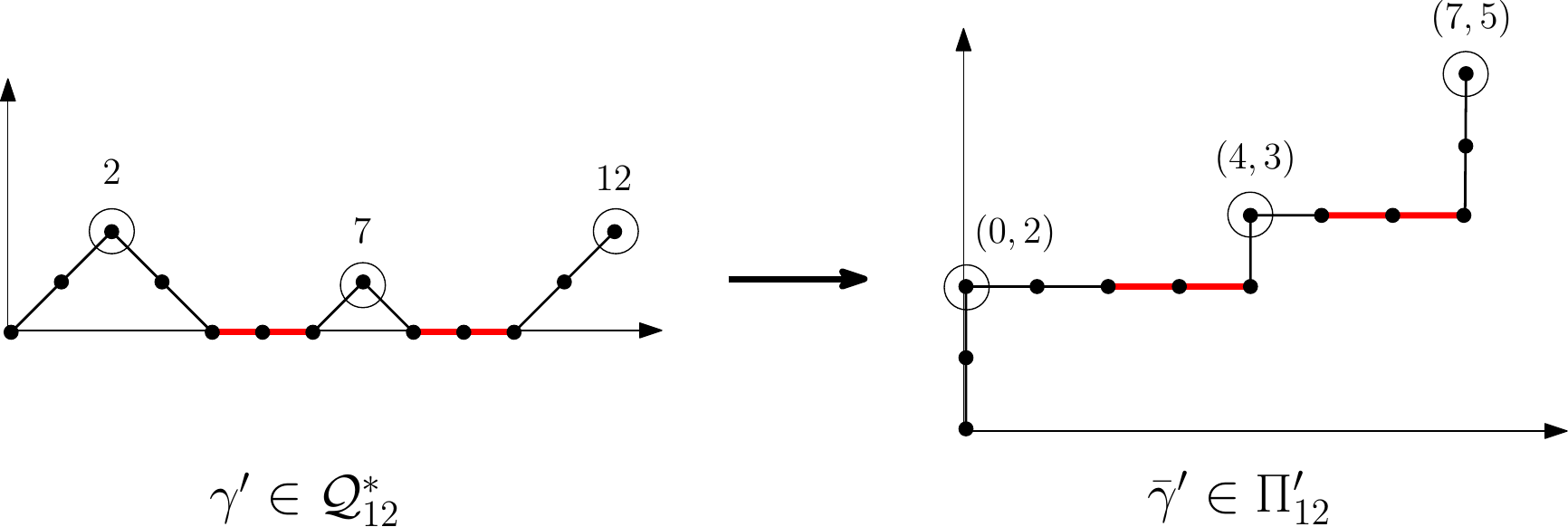}\label{fig:chemin_alt_typeB}
\end{center}

As before, if we fix $w\in\bar{B}_n^{FC}$, the descents of $w$ correspond in $\bar{\gamma}'\in \Pi'_n$ to corner points, together with  the last point if  it ends a vertical step.  Denote all these points by $P_1(a_1,b_1),\dots,P_j(a_j,b_j)$; they satisfy  $a_1>\dots>a_j\geq 0$ and $b_1>\dots>b_j>0$, with the additional condition $a_1+b_1\leq n$, and again $\maj(w)$ is equal to $a_1+b_1+\dots+a_j+b_j$. As in type $A$, the corner points  define a unique partition $\lambda_{\bar{\gamma}'}$ in Frobenius notation, such that  $a_1+b_1\leq n$. 

Summarizing,  alternating FC involutions in $B_n$ are in bijection with such integers partitions $\lambda_{\bar{\gamma}'}$ satisfying $a_1+b_1\leq n$. If  $\lambda_{\bar{\gamma}'}$ is not empty, note that $a_1=\lambda_1$ and $b_1=l-1$, where $l$ is its length. Set $h:=a_1+b_1$ and $i:=b_1$, therefore we have $1\leq h\leq n$ and $0\leq i\leq h-1$. Remove  from $\lambda_{\bar{\gamma}'}$ its first part $\lambda_1$ and decrease by one all the remaining parts, therefore giving a partition $\mu$ of weight $|\lambda_{\bar{\gamma}'}|-h$. Moreover,  $\mu$ satisfies the conditions of Lemma~\ref{lem:partitions} with $n$ replaced by $h-1$ and $k$ by $i$, therefore giving the generating function
$$1+\sum_{h=1}^nq^h\sum_{i=0}^{h-1}\qbi{h-1}{i}{q},$$
which concludes the proof.
\end{proof}

Thanks to the previous proof, we can refine as follows  this result, as was done in~\cite{BBES} for type $A$.
\begin{corollary}
For any integer  $1\leq k\leq n$, we have
$$\sum_{\stackrel{w \in \bar{B}_n^{FC}}{{\rm des(w)=k}}} q^{\maj(w)}=q^{k^2}\sum_{h= 0}^{n-2k-1} q^h 
\sum_{i=0}^{h}  \qbi{i+k-1}{k-1}{q} \qbi{h+k-1-i}{k-1}{q}.$$ 
Moreover, when $k=0$, this generating function is equal to $1$. 
\end{corollary}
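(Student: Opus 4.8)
The plan is to refine the proof of Proposition~\ref{prop:majorB} by the number of descents, using the observation that under the walk-to-partition bijection the descent number of $w$ equals the number of corner points of $\bar\gamma'$, which is precisely the Frobenius rank (equivalently, the size of the Durfee square) of the associated partition $\lambda$, while $\maj(w)=|\lambda|$. Fixing $\mathrm{des}(w)=k$ therefore singles out the partitions $\lambda$ whose Durfee square has size exactly $k$. In particular $k=0$ forces $\lambda=\emptyset$, i.e.\ $w$ is the identity, which accounts for the trailing remark that the generating function is then equal to $1$.

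For $k\geq 1$ I would first extract the factor $q^{k^2}$ by a staircase substitution on the Frobenius data. Writing the corner points of $\bar\gamma'$ as $a_1>\dots>a_k\geq 0$ and $b_1>\dots>b_k\geq 1$, I set $a_i=c_i+(k-i)$ and $b_i=d_i+(k-i)+1$, which turns the strict inequalities into the condition that $c=(c_1\geq\dots\geq c_k\geq 0)$ and $d=(d_1\geq\dots\geq d_k\geq 0)$ are arbitrary partitions with at most $k$ parts. A direct computation gives $\maj(w)=\sum_i(a_i+b_i)=k^2+|c|+|d|$, so the Durfee square contributes $q^{k^2}$ and the problem reduces to enumerating pairs $(c,d)$ by $|c|+|d|$ subject to the surviving boundary constraint coming from $a_1+b_1\leq n$.

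Next I would organize this enumeration by the two largest parts. Pulling the largest part out of each of $c$ and $d$ and invoking Lemma~\ref{lem:partitions}, the generating function for partitions with at most $k$ parts and largest part equal to $i$ is $q^{i}\qbi{i+k-1}{k-1}{q}$; hence, setting $h:=c_1+d_1$ and $i:=c_1$, the pair $(c,d)$ contributes exactly $q^{h}\,\qbi{i+k-1}{k-1}{q}\qbi{h+k-1-i}{k-1}{q}$, and summing over $i$ and $h$ produces the advertised convolution. Translating $a_1+b_1\leq n$ into the bound on $h=c_1+d_1$ then fixes the range of the outer summation. The right-peak family is handled by the same staircase substitution: as in the proof of Proposition~\ref{prop:majorB} it maps to the type-$A$ walks, hence to partitions inside a fixed box, and produces the analogous rank-$k$ term via Lemma~\ref{lem:partitions}.

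The step I expect to be the main obstacle is the bookkeeping that combines the two families of Proposition~\ref{prop:typeclassiques} (self-dual alternating heaps and right-peaks) at a fixed descent number $k$ and pins down the exact summation range of the closed form; keeping track of the precise upper limit, and of the degenerate configurations in which $c$ or $d$ is empty (so that $i=0$ or $i=h$), is where care is needed. By contrast, the extraction of $q^{k^2}$ and the appearance of the $q$-binomial convolution are routine once the staircase substitution is in place.
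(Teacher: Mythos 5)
Your method is sound, and for the alternating family it is essentially the paper's own argument: identify ${\rm des}(w)$ with the Frobenius rank (Durfee square size) of $\lambda_{\bar\gamma'}$, peel off $q^{k^2}$, and apply Lemma~\ref{lem:partitions}; your staircase substitution is just an all-at-once version of the paper's strip-the-first-part recursion, and your decision to treat the right-peak family separately is correct. (One small slip: in type $B$ the descents correspond to the corner points \emph{together with} the final point of $\bar\gamma'$ when it ends with a vertical step, which is how a descent at $s_n$ is recorded; since the paper's Frobenius data consists of exactly these points, your identification of ${\rm des}(w)$ with the rank still stands.) The genuine problem is the step you postponed as ``bookkeeping'': it cannot be carried out, because what your computation produces is provably not the stated formula. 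Your own constraint $a_1+b_1\le n$ becomes $h=c_1+d_1\le n-2k+1$, not $n-2k-1$; and the right-peaks, which biject with rank-$k$ partitions in the $\lceil n/2\rceil\times\lfloor n/2\rfloor$ box, contribute an additional summand $q^{k^2}\qbi{\lceil n/2\rceil}{k}{q}\qbi{\lfloor n/2\rfloor}{k}{q}$ that is simply absent from the statement. Executed faithfully, your argument proves
\begin{equation*}
\sum_{\stackrel{w \in \bar{B}_n^{FC}}{{\rm des}(w)=k}} q^{\maj(w)}
= q^{k^2}\sum_{h=0}^{n-2k+1} q^h\sum_{i=0}^{h}\qbi{i+k-1}{k-1}{q}\,\qbi{h+k-1-i}{k-1}{q}
\;+\; q^{k^2}\qbi{\lceil n/2\rceil}{k}{q}\,\qbi{\lfloor n/2\rfloor}{k}{q},
\end{equation*}
which is not the corollary as printed.

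The discrepancy is not your fault: the printed corollary is false. Take $n=2$, $k=1$: the stated sum is empty (upper limit $-1$), hence $0$, yet $s_1$, $s_2$, $s_1s_2s_1$, $s_2s_1s_2$ all lie in $\bar{B}_2^{FC}$ with exactly one descent, giving $2q+2q^2$; the display above gives $q(1+2q)+q=2q+2q^2$. Similarly, for $n=3$, $k=1$ the statement gives $q$ while the true answer is $2q+3q^2+3q^3$. The paper's own proof shares both defects: it analyses only $\lambda_{\bar\gamma'}$, i.e.\ only the alternating family, silently dropping the right-peaks (which do contribute for every $k\ge 1$, e.g.\ $s_1s_2s_1$ above), and its unshifted sum over $2k-1\le h\le n$, $k-1\le i\le h-k$ shifts to the range $0\le h\le n-2k+1$, not $n-2k-1$. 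So, relative to the paper, your blind attempt is the more careful one; what it lacks is the recognition that the advertised right-hand side must be corrected rather than matched. Once you add the right-peak term, fix the upper limit, and (as a consistency check) verify that summing over $k\ge 0$ recovers Proposition~\ref{prop:majorB} via the Durfee-square identity $\sum_{k\ge 0} q^{k^2}\qbi{\lceil n/2\rceil}{k}{q}\qbi{\lfloor n/2\rfloor}{k}{q}=\qbi{n}{\lfloor n/2\rfloor}{q}$, your proof goes through.
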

\begin{proof}
We analyse more precisely the structure of the partition $\lambda_{\bar{\gamma}'}$ from the previous proof when the  FC involution has $k$ descents. Obviously, if $k=0$, then this partition is empty, which yields the result. Otherwise $k\geq1$ is the row length in the Frobenius notation of $\lambda_{\bar{\gamma}'}$, so it  corresponds to the size of its Durfee square. We can define as before the integers $h$ and $i$, satisfying this time $2k-1\leq h\leq n$ and $k-1\leq i\leq h-k$, respectively. Moreover, the partition $\mu$ obtained from $\lambda_{\bar{\gamma}'}$ has a Durfee square of size $k-1$, so the desired generating function is equal to
$$\sum_{h= 2k-1}^{n} q^h 
\sum_{i=k-1}^{h-k} q^{(k-1)^2} \qbi{i}{k-1}{q} \qbi{h-1-i}{k-1}{q}.$$  
The result is then obtained by shifting accordingly the indices $h$ and $i$.
\end{proof}

\subsection{Type $D$}
Finally, for type $D$, we have the following result.
\begin{proposition}\label{prop:majorD}
For any positive integer $n$, we have
\begin{equation}\label{cmajD}
\sum_{w\in \bar{D}_{n+1}^{FC}}q^{\maj(w)}=P_n(q)+(q^{2n+1}-q^n)\qbi{n-1}{\lfloor (n-1)/2 \rfloor}{q}+\qbi{n+1}{\lfloor (n+1)/2 \rfloor}{q},
\end{equation}
where for $n$ even, $$P_n(q):=\sum_{h=1}^{n-1}q^h\sum_{i=0}^{h-1}\qbi{h-1}{i}{q}+\frac{1}{2}q^n(1+q)\sum_{i=0}^{n-1}\qbi{n-1}{i}{q},$$
and for $n$ odd,
\begin{multline*}
P_n(q):=\sum_{h=1}^{n-1}q^h\sum_{i=0}^{h-1}\qbi{h-1}{i}{q}+\frac{1}{2}q^n(1+q)\sum_{i=0}^{n-1}\qbi{n-1}{i}{q}\\+\sum_{h=1}^{(n-1)/2}q^{n-h}\qbi{n-h-1}{(n-1)/2}{q}+\frac{1}{2}q^n(1+q)\qbi{n-1}{(n-1)/2}{q}.
\end{multline*}
\end{proposition}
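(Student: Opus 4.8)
The plan is to mirror the type $B$ proof, using Proposition~\ref{prop:typeclassiques} to separate FC involutions in $D_{n+1}$ into self-dual alternating heaps and self-dual right-peaks, and then to apply the same walk-to-partition correspondence used in types $A$ and $B$ to each family. The main new feature is that self-dual alternating heaps of type $D_{n+1}$ are, by their very definition, obtained from self-dual alternating heaps of type $B_n$ having either zero or an odd number of labels $s_n$, by splitting these into $s_n/s_{n+1}$ data. Thus I expect $P_n(q)$ to record precisely the contribution of these alternating heaps, while the two remaining terms in \eqref{cmajD} account for the right-peaks.

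First I would treat the self-dual right-peaks. As in type $B$, a right-peak splits uniquely (by Remark~\ref{rem:familyPeaks}) for some $j\in\{1,\dots,n-1\}$ into the ``alternating'' part $H_{\{s_1,\dots,\dot s_j\}}$ and the ``peak'' part $H_{\{s_j,\dots,s_{n+1}\}}$ determined by condition $(a')$. The peak part now climbs all the way to $s_{n+1}$ and hence is slightly longer than in type $B$; the effect on the associated walk in $\catset^*_n$ is a shift of the descent positions, and this should produce the factor $(q^{2n+1}-q^n)\qbi{n-1}{\lfloor(n-1)/2\rfloor}{q}$ together with part of the final $q$-binomial term. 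The computation is the same $q$-binomial bookkeeping as in type $B$, keeping careful track of where the single deleted label $\dot s_j$ and the fork at $s_n,s_{n+1}$ change the major index. I expect this to be routine once the correct walk encoding is fixed.

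The genuinely new and most delicate step will be computing $P_n(q)$, the generating function for the self-dual alternating heaps of type $D_{n+1}$. The idea is to start from the type $B_n$ alternating generating function already obtained in the proof of Proposition~\ref{prop:majorB}, and then understand how passing from $B_n$ to $D_{n+1}$ modifies the walk and hence the major index. The heaps with zero labels $s_n$ contribute exactly as in the even/shorter type $B$ case, while those with an odd number of labels $s_n$ split into an $s_n/s_{n+1}$ alternation; the two ways of distributing the top label between $s_n$ and $s_{n+1}$ are what produce the symmetric factor $\tfrac12 q^n(1+q)$ multiplying $\sum_{i}\qbi{n-1}{i}{q}$. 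The parity of $n$ enters because when $n$ is odd there is an additional self-dual configuration (coming from heaps with a central column of a specific height, or equivalently from the case where the Durfee-type symmetry forces an extra family), which is exactly the source of the two extra summands appearing in $P_n(q)$ for odd $n$. The hard part will be disentangling these parity-dependent boundary cases cleanly: one must verify that the fork substitution preserves self-duality precisely for the heaps with an odd (or zero) number of $s_n$'s, correctly translate the added/removed labels into a shift of corner-point coordinates in the associated $\Pi_n$-type walk, and check that the resulting partition statistics match Lemma~\ref{lem:partitions} with the stated parameters. I would organize this by isolating the $s_n$-column height as a separate variable, summing over its parity, and assembling the pieces; a consistency check against the specialization $q=1$, which must recover \eqref{cardD} from Proposition~\ref{coro:enumeration}, would confirm that no boundary term has been miscounted.
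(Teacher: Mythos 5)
Your high-level plan (split the FC involutions into self-dual alternating heaps and self-dual right-peaks via Proposition~\ref{prop:typeclassiques}, then run the walk-to-partition correspondence) is the same as the paper's, but your attribution of the terms of \eqref{cmajD} to these two families is wrong, and the error is structural rather than cosmetic. In a self-dual right-peak of type $D_{n+1}$, the two fork elements labeled $s_n$ and $s_{n+1}$ lie strictly below the second element labeled $s_{n-1}$ in the peak chain, so neither is ever maximal in the heap: right-peaks have \emph{no} descents at $s_n$ or $s_{n+1}$, their descent sets coincide with those of the corresponding type $B_n$ right-peaks, and their contribution is exactly $\qbi{n}{\lfloor n/2\rfloor}{q}-1$, i.e.\ \eqref{eq:LPmaj} --- none of the terms you assign to them. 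The factor $(q^{2n+1}-q^n)\qbi{n-1}{\lfloor(n-1)/2\rfloor}{q}$ comes instead from the \emph{alternating} heaps containing the commuting pair $s_ns_{n+1}$ (those built from a type $B_n$ heap with exactly one label $s_n$), which correspond to walks in $\catsuffixset^*_n$ ending at height $1$: when the last step ascends, both $s_n$ and $s_{n+1}$ are descents, giving $q^{2n+1}\qbi{n-1}{\lfloor(n-1)/2\rfloor}{q}$; when it descends, neither is a descent, and appending a down step and subtracting the walks with a peak at $(n,1)$ or a final horizontal step gives $\qbi{n+1}{\lfloor(n+1)/2\rfloor}{q}-q^n\qbi{n-1}{\lfloor(n-1)/2\rfloor}{q}-\qbi{n}{\lfloor n/2\rfloor}{q}$. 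Consequently $P_n(q)+1$ is the generating function of the fork-free alternating heaps only, not of all alternating heaps as you assume.

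Moreover, the genuinely delicate step --- computing that fork-free alternating contribution --- is only gestured at, and your explanation of the parity dependence is not the right mechanism. For $n$ even the paper obtains it as $\tfrac12\bigl(\catsuffixset^*_{s_n}(q)+\catsuffixset^*_{s_{n+1}}(q)\bigr)$ via a peak-preserving bijection between walks in $\catsuffixset^*_n$ ending at odd height and those ending at positive even height (replace the last horizontal step by an up step); the factor $\tfrac12 q^n(1+q)$ appears because $\catsuffixset^*_{s_{n+1}}(q)$ differs from $\catsuffixset^*_{s_n}(q)$ only in the $h=n$ term, which gets multiplied by $q$. Parity enters because for $n$ odd a walk ending at odd height may contain no horizontal step at all, and on exactly those walks the bijection breaks down; the two extra summands of $P_n(q)$ for odd $n$ are the separate count $\tfrac12\bigl(\catsuffixset_{s_n}(q)+\catsuffixset_{s_{n+1}}(q)\bigr)$ of horizontal-step-free walks, which requires its own partition argument --- not an additional self-dual configuration forced by Durfee-square symmetry, as you suggest. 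Your proposed $q=1$ check against \eqref{cardD} would indeed expose the miscount, but as written the plan cannot assemble into a proof: the right-peak count contradicts your announced target from the start, and the steps you defer as routine are precisely where the content of the proof lies.
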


\begin{proof}
Obviously from their definition, self-dual right-peaks of type $D_{n+1}$ are in bijection with the ones of type $B_n$, and have no descents at $s_n, s_{n+1}$, so  their generating function is equal to~\eqref{eq:LPmaj}.

The alternating involutions containing $s_n s_{n+1}$ are in bijection with walks in $\catsuffixset^*_n$ ending at height $1$. Among these, the walks having an ascending last step are enumerated by 
$$\displaystyle q^{2n+1}\qbi{n-1}{\lfloor (n-1)/2 \rfloor}{q}.$$ Indeed  both $s_n$ and $s_{n+1}$ are descents, and the first $n-1$ steps form a walk in $\catset^*_{n-1}$ so we can use Proposition~\ref{prop:majorA}. 

The generating function for the previous walks having a descending last step can be computed in the following way: first add an extra descending step to reach the $x$-axis, thus obtaining a walk in $\catset^*_{n+1}$ ending with two descending steps.  Note that neither $s_n$ nor $s_{n+1}$ are descents for the corresponding involutions, so the generating function is obtained by computing the generating function of all type $A$ walks from the origin to $(n+1,0)$, to which we must  substract the generating function of the walks having either a peak in position $(n,1)$ or ending with one horizontal step (see the figure below). 
\begin{center}
\includegraphics[width=12cm]{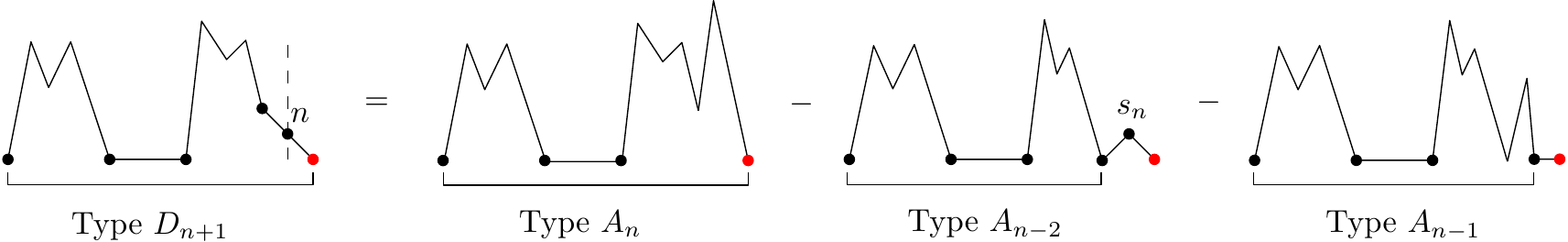}\label{fig:sn_no_typeD}
\end{center}
Again by Proposition~\ref{prop:majorA}, the result is
$$\qbi{n+1}{\lfloor (n+1)/2 \rfloor}{q} - q^n\qbi{n-1}{\lfloor (n-1)/2 \rfloor}{q} - \qbi{n}{\lfloor n/2 \rfloor}{q}.$$
By adding the two previous expressions to~\eqref{eq:LPmaj}, we get the two last terms in~\eqref{cmajD} minus $1$, therefore we now have  to show that the generating function for the remaining alternating elements is equal to $P_n(q)+1$.
Consider such an element, and let  $H$ be its corresponding heap. If there is no occurrence of $s_n$ and $s_{n+1}$, then $H$ is encoded by a walk in $\catset^*_{n}$. Otherwise notice that self-duality implies that  $|H_{\{s_n,s_{n+1}\}}|$ is odd. Therefore if the first label of $H_{\{s_n,s_{n+1}\}}$ is $s_n$ (\emph{resp.} $s_{n+1}$), then $H$ is in bijection with a walk $\gamma^{s_n}$ (\emph{resp.} $\gamma^{s_{n+1}}$)  in $\catsuffixset^*_n$ ending at an odd height. 

Denote by $\catset^*_{n}(q)$ the generating function~\eqref{cmajA} for type $A_{n-1}$, and 
$\catsuffixset^*_{s_n}(q)$ the generating function for the alternating FC involutions in $B_{n}$. Let also $\catsuffixset^*_{s_{n+1}}(q)$ be the generating function of alternating FC involutions in $B_n$ but where each occurrence of $s_n$ is replaced by $s_{n+1}$.

We claim that if $n$ is even, then the desired generating function is equal to 
\begin{equation}\label{eq:1}
 \catset^*_{n}(q) + \frac{1}{2} (\catsuffixset^*_{s_n}(q) -\catset^*_{n}(q)) + \frac{1}{2}( \catsuffixset^*_{s_{n+1}}(q)-\catset^*_{n}(q)).
\end{equation}
Recall that $\catsuffixset^*_{s_n}(q)$ counts walks in $\catsuffixset^*_n$. These walks can be split into three parts: the ones ending at $0$, at an even height $\geq 2$, or at an odd height. Then \eqref{eq:1} is  consequence of a bijection between these walks ending at an odd height, and those ending at an even $\geq 2$ height. This bijection goes as follows: in a walk ending at an odd height replace the last $(1,0)$ step (as $n$ is even there is at least one)  by a $(1,1)$ step, and shift accordingly the remaining steps. For the reverse application, change the last $(1,1)$ step starting from the $x$-axis into a $(1,0)$ step. This bijection preserves the peaks of the walks (and therefore the descents of the associated FC involutions).

Simplifying~\eqref{eq:1}, we have to compute $(\catsuffixset^*_{s_n}(q) +\catsuffixset^*_{s_{n+1}}(q))/2$, where thanks to the analysis of the previous type $B$ case,
$$\catsuffixset^*_{s_n}(q)=1+\sum_{h=1}^nq^h\sum_{i=0}^{h-1}\qbi{h-1}{i}{q}.$$
Notice that in type $B$, a FC involution has a descent at $s_n$ if and only if $a_1+b_1=n$, with the notations above. Therefore $\catsuffixset^*_{s_{n+1}}(q)$ can be computed similarly to $\catsuffixset^*_{s_{n}}(q)$, except for $h=n$, in which case the corresponding term has to be multiplied by $q$. This yields the desired expression $P_n(q)+1$.

If $n$ is odd, our previous bijection between walks ending at odd and even height does not work anymore, as our walks can have no horizontal step. So after splitting the walks in $\catsuffixset^*_n$ as before, but where the walks ending at an odd height are separated between those having or not an horizontal step, we can use the previous bijection to see that one must therefore replace~\eqref{eq:1} by 
\begin{equation*}\label{eq:2}
\frac{1}{2} (\catsuffixset^*_{s_n}(q)+ \catsuffixset^*_{s_{n+1}}(q))+\frac{1}{2} (\catsuffixset_{s_n}(q)+\catsuffixset_{s_{n+1}}(q)),
\end{equation*}
where $\catsuffixset_{s_n}(q)$ is the generating function for the walks in $\catsuffixset^*_n$ having no horizontal step, and $\catsuffixset_{s_{n+1}}(q)$ is the same generating function, but where each occurrence of $s_n$ is replaced by $s_{n+1}$. Note that  all these walks must end at an odd height. To conclude, it is enough to prove that 
$$\catsuffixset_{s_{n}}(q)=\sum_{h=0}^{(n-1)/2}q^{n-h}\qbi{n-h-1}{(n-1)/2}{q}.$$
To see this, consider any of the previous walks, which has, say  $j$, steps of the form $(1,1)$ leaving definitively a certain height (note that $j$ has to be odd, like $n$). Transform bijectively the $(j-1)/2$ first of these steps in the previous walk to steps $(1,-1)$ and shift accordingly all remaining steps. Remark that peaks are preserved by this transformation (this is a well known bijection, see \cite{BBES} and the references therein). Then transform our walk as in types $A$ and $B$,  to get a walk of length $n$ with paths $(1,0)$ and $(0,1)$, starting at the origin and ending at $((n-1)/2,(n+1)/2)$. The analysis is now the same as in type $A$, except that we have to distinguish whether the last step corresponds to a descent (if it is of the form $(0,1$)) or not. To do this, denote by $h$ the distance between the top extremity of the last vertical step and the point $((n-1)/2,(n+1)/2)$, as shown in the figure below, and then proceed as in type $A$ to relate our walks to integer partitions in a box.

\begin{center}
\includegraphics[width=10cm]{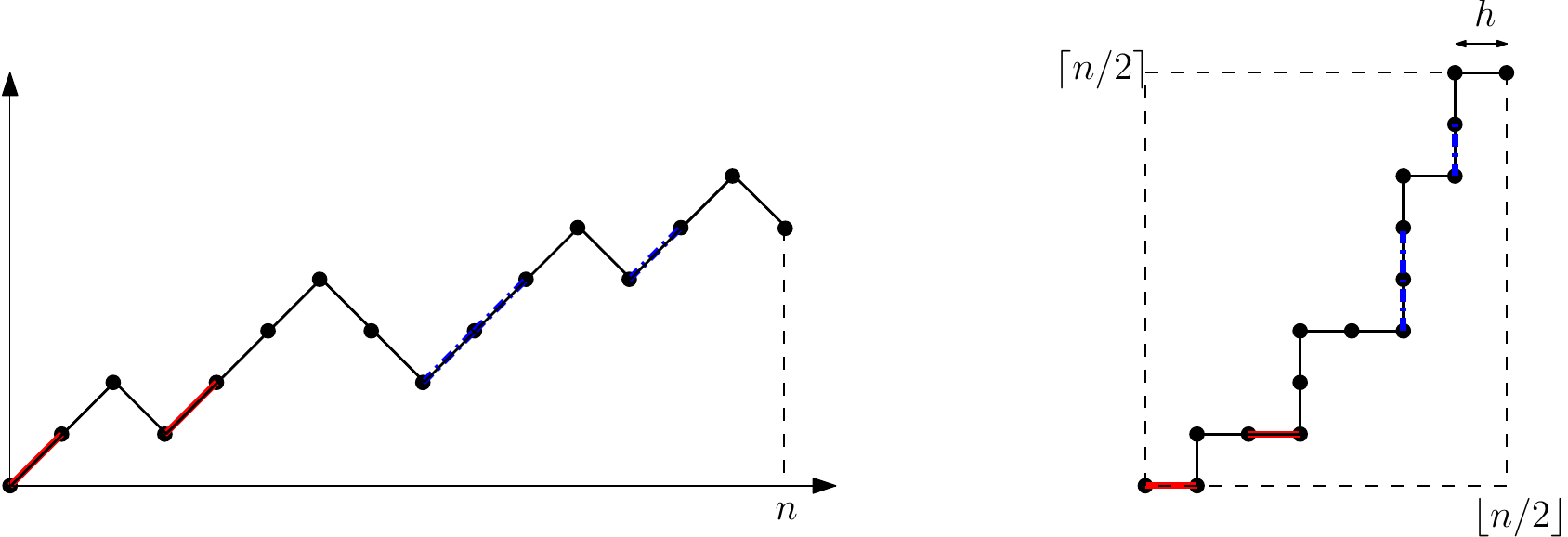}\label{fig:bijection_typeD}
\end{center}

\end{proof}

%%%%%%%%%%%%%%%%%%%%%%%%%%%%
\section{Enumeration with respect to the Coxeter length}\label{sec:length}
%%%%%%%%%%%%%%%%%%%%%%%%%%%%

%%%%%%%%%%%%%%%%%%%%%%%%%%%%
\subsection{Finite types}\label{subsec:lengthfinite}
%%%%%%%%%%%%%%%%%%%%%%%%%%%%

Recall from the introduction that for any Coxeter group $W$, the length generating function $\bar{W}^{FC}(t)$ is defined as $\sum_{w \in \bar{W}^{FC}} t^{\ell(w)}$. In the three classical finite types these series are polynomials, and we denote their generating functions by  
$$\bar{A}(x)=\sum_{n\geq1}\bar{A}_{n-1}^{FC}(t)x^{n}, \quad  \bar{B}(x)=\sum_{n\geq0}\bar{B}_{n}^{FC}(t)x^{n}, \ {\rm and} \quad  \bar{D}(x)=\sum_{n\geq0} \bar{D}_{n+1}^{FC}(t)x^{n},$$ where $\bar{D}_1^{FC}(t):=1$. 
\begin{proposition}\label{prop:involutions}
We have 
\begin{eqnarray}
\bar{A}(x)&=&\frac{\cat(x)}{1-x\cat(x)}-1,\label{typeAinv}\\
\bar{B}(x)&=&\frac{\catsuffix(x)}{1-x\cat(x)}+\frac{x^2t^3}{1-xt^2}\frac{\cat(x)\cat(tx)}{1-x\cat(x)},\label{typeBinv}\\
\bar{D}(x)&=& 2 \frac{\catsuffix^{o}(x)}{1-x\cat(x)} +\frac{\cat(x)}{1-x\cat(x)} +\frac{xt^2}{1-xt^2}\frac{\cat(x)\cat(tx)}{1-x\cat(x)},\label{typeDinv}
\end{eqnarray}
where $\catsuffix^{o}(x)$ denotes the generating function for  $t$-weighted prefixes of  Dyck walks ending at an odd height. Note that the previous generating functions can all be explicitly computed thanks to~\eqref{eqfunccat} and the following functional equations:
\begin{eqnarray}
\catsuffix(x)&=&\cat(x)(1+xt\catsuffix(tx)),\nonumber\\
\catsuffix^{o}(x)&=&xt\cat(x)\cat(tx)(1+xt^2\catsuffix^{o}(xt^2))\nonumber.
\end{eqnarray}
\end{proposition}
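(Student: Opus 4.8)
The plan is to compute each of the three generating functions by decomposing the relevant walks according to the bijective encodings from Proposition~\ref{proposition:walk_encoding} and Remark~\ref{rk:ajouterunpas}, and then translating these decompositions into functional equations using the master relation~\eqref{lienDyck}. For each type, a FC involution is encoded by a self-dual heap, which in turn corresponds to a lattice walk; the total height of the walk records the heap size, and I must check that this total height equals the Coxeter length $\ell(w)$ so that the $t$-weighting is correct. I would begin by recording this length-versus-size dictionary: since each element of a reduced word contributes one generator, $\ell(w)=|\H(w)|$, and by Proposition~\ref{proposition:walk_encoding} this is exactly $\haut(\varphi(H))$, so the $t^{\ell(w)}$ weighting matches the $t^{\haut}$ weighting of the walks throughout.

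For type $A$, by Proposition~\ref{prop:typeclassiques} and Remark~\ref{rk:ajouterunpas}, nonempty FC involutions in $A_{n-1}$ biject with walks in $\catset^*_n$, so summing over $n$ gives $\cat^*(x)$; using~\eqref{lienDyck} and subtracting the empty walk yields~\eqref{typeAinv}. For type $B$, Proposition~\ref{prop:typeclassiques} splits the involutions into self-dual alternating heaps (in bijection with $\catsuffixset^*_n$ by Remark~\ref{rk:ajouterunpas}) and self-dual right-peaks. The alternating part contributes $\catsuffix^*(x)$, which by an equation of the form~\eqref{lienDyck} equals $\catsuffix(x)/(1-x\cat(x))$, giving the first summand. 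For the right-peak part, I would use the unique decomposition from Remark~\ref{rem:familyPeaks}, exactly as in the proof of Proposition~\ref{prop:majorB}: the peak splits at the determined index $j$ into a left piece encoded by a prefix and a right piece of the form one descent followed by horizontal steps, and the extra generators forming the peak contribute the explicit weight $x^2t^3/(1-xt^2)$. Assembling these two pieces through~\eqref{lienDyck} produces the second summand of~\eqref{typeBinv}.

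For type $D$, I would follow the same strategy but track the modification at the branch node described before Figure~\ref{fig:alt_typeD}: self-dual alternating heaps of type $D_{n+1}$ arise from those of type $B_n$ by replacing a chain of $s_n$'s by an alternating $s_n/s_{n+1}$ pattern or an $s_ns_{n+1}$ block. The three summands in~\eqref{typeDinv} should correspond respectively to (i) the alternating heaps with a genuine $s_n/s_{n+1}$ branching, which by self-duality forces an odd number of labels at the branch and hence prefixes ending at odd height, contributing the factor $\catsuffix^{o}(x)$ with a multiplicity $2$ for the two choices of starting label; (ii) the alternating heaps with no branch, encoded by $\catset^*$-type walks giving $\cat(x)/(1-x\cat(x))$; and (iii) the self-dual right-peaks of type $D_{n+1}$, which by the remark after Proposition~\ref{prop:majorD} biject with those of type $B_n$ and thus contribute a term analogous to the type $B$ peak term but with weight $xt^2/(1-xt^2)$ reflecting the single extra generator $s_{n+1}$ at the fork. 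Finally I would verify the stated functional equations for $\catsuffix(x)$ and $\catsuffix^{o}(x)$ by the first-return-to-the-axis decomposition, exactly as~\eqref{eqfunccat} was obtained, noting that each return contributes a Catalan factor $\cat$ and each horizontal or forced step shifts the height-weighting argument from $x$ to $tx$ (or $xt^2$ in the odd case).

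The main obstacle will be getting the exact weights and multiplicities right in type $D$: one must carefully account for the self-duality constraint forcing odd height at the branch, the factor $2$ from the two possible leading labels $s_n$ versus $s_{n+1}$, and the precise generator bookkeeping at the fork (the $xt^2$ versus $x^2t^3$ discrepancy with type $B$ comes from the branch geometry). I would double-check these by comparing the specialization $t=1$ of~\eqref{typeDinv} against the closed form~\eqref{cardD} in Proposition~\ref{coro:enumeration}, which provides a reliable consistency test for the constants and the odd/even parity handling.
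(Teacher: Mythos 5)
Your treatment of types $A$ and $B$ follows the paper's proof essentially step for step: the same encodings (Proposition~\ref{proposition:walk_encoding}, Remark~\ref{rk:ajouterunpas}), the same splitting of type $B$ into alternating heaps and right-peaks via Remark~\ref{rem:familyPeaks}, and the same walk decompositions. One imprecision there: assembling the two pieces of a type $B$ right-peak is not done "through~\eqref{lienDyck}"; since the generator $s_j$ at the junction belongs to both pieces, the product of the two generating functions $\frac{xt\cat(x)\cat(tx)}{1-x\cat(x)}$ and $\frac{x^2t^3}{1-xt^2}$ must be divided by $xt$ to obtain the second summand of~\eqref{typeBinv}.

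The type $D$ case, however, has a genuine gap. By the definition preceding Figure~\ref{fig:alt_typeD}, a self-dual alternating heap of type $B_n$ with \emph{exactly one} occurrence of $s_n$ produces, besides the two replacements $s_n\mapsto s_n$ and $s_n\mapsto s_{n+1}$, a \emph{third} type-$D$ heap in which $s_n$ is replaced by the commuting block $s_ns_{n+1}$. Your case (i) counts only the two alternating replacements (that is exactly what the factor $2\catsuffix^{o}(x)/(1-x\cat(x))$ accounts for), so this block family is absent from your decomposition; its generating function is $xt^2\cat(x)\cat(tx)/(1-x\cat(x))$, coming from walks in $\catsuffixset^*_n$ ending at height $1$ (series $xt\cat(x)\cat(tx)/(1-x\cat(x))$) times an extra $t$ for the additional element $s_{n+1}$. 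Simultaneously, your case (iii) assigns the wrong weight to the right-peaks: under the bijection with type $B_n$ right-peaks, each type $D_{n+1}$ right-peak has exactly one more element, so their contribution is $t$ times the type $B$ peak term, namely $\frac{x^2t^4}{1-xt^2}\frac{\cat(x)\cat(tx)}{1-x\cat(x)}$, and not $\frac{xt^2}{1-xt^2}\frac{\cat(x)\cat(tx)}{1-x\cat(x)}$ --- indeed your own justification ("a single extra generator at the fork") would multiply $x^2t^3/(1-xt^2)$ by $t$, which is not what you wrote. The third summand of~\eqref{typeDinv} only appears after summing the two contributions you conflated, via $xt^2+\frac{x^2t^4}{1-xt^2}=\frac{xt^2}{1-xt^2}$. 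A concrete sanity check: the smallest type-$D$ right-peak is $\H(s_{n-1}s_ns_{n+1}s_{n-1})$, of length $4$, so the peak series must begin at $x^2t^4$; the low-order term $xt^2$ that your formula attributes to peaks actually corresponds to the block element $s_ns_{n+1}\in\bar{D}_{n+1}^{FC}$. Your proposed specialization $t=1$ against~\eqref{cardD} would flag this inconsistency, but as written the decomposition does not prove~\eqref{typeDinv}.
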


\begin{proof}
From Proposition~\ref{prop:typeclassiques}, and Proposition~\ref{proposition:walk_encoding}, FC involutions in $A_{n-1}$ are in one-to-one correspondence with walks in $\catset^*_n$. This shows~\eqref{typeAinv} by using~\eqref{lienDyck}.

From Proposition~\ref{prop:typeclassiques}, FC involutions in $B_n$ correspond to self-dual heaps which are either alternating or right-peaks. 

\begin{figure}[h!]
\begin{center}
\includegraphics[width=6cm]{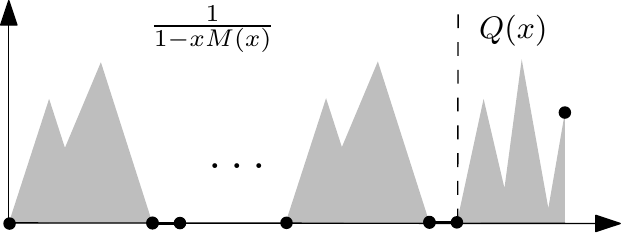}
\caption{Decomposition of walks for self-dual alternating heaps of type $B$.}\label{fig:Decomp_C+} 
\end{center}
\end{figure}
As before, in the alternating case, one can use Proposition~\ref{proposition:walk_encoding} to show that they are in one-to-one correspondence with walks in $\catsuffixset^*_n$. Moreover, such a walk can be seen as a collection of pairs (Dyck path, horizontal step), followed by a prefix  of a Dyck path. This is illustrated in Figure~\ref{fig:Decomp_C+} and shows that
$$\catsuffix^*(x)=\frac{\catsuffix(x)}{1-x\cat(x)},$$
yielding the first term on the right-hand side of~\eqref{typeBinv}.  

By definition right-peaks can be split into two parts $H_{\{s_1,\ldots, \dot{s}_{j}\}}$ and $H_{\{s_j,\ldots, s_n\}}$, and this decomposition is unique by  Remark~\ref{rem:familyPeaks}. The generating function of the second part is  
$$\frac{x^2t^3}{1-xt^2},$$
 while by Proposition~\ref{proposition:walk_encoding} the first part is in bijection with walks in $\catsuffixset^*_n$ ending at height $1$. The latter can be seen as collections of pairs (Dyck path, horizontal step),  followed by a couple of Dyck walks, the first  starting and ending at height 0, and the second at height 1, as illustrated below.

\begin{figure}[h!]
\begin{center}
\includegraphics[width=8cm]{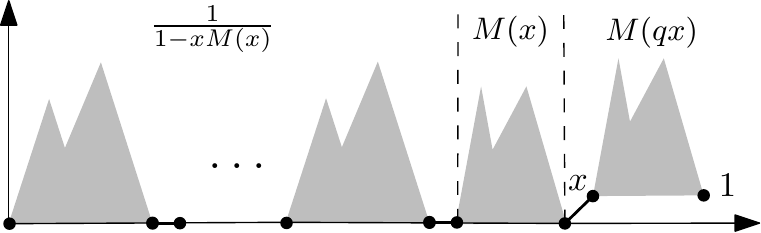}
\caption{Decomposition of walks for self-dual right-peaks of type $B$.}\label{fig:Decomp_x}
\end{center}
\end{figure}

This yields the generating function $$\frac{xt \cat(x)\cat(tx)}{1-x\cat(x)}.$$ The second term on the right-hand side of~\eqref{typeBinv} arises as the product of these two generating functions, after dividing by $xt$, which corresponds to the occurrence of one $s_j$ belonging to both parts of our split.

Finally, for the proof of~\eqref{typeDinv}, we use the type $D_{n+1}$ case of Proposition~\ref{prop:typeclassiques}. Starting from a self-dual alternating heap $H$ of type $B_n$, we have three cases:
\begin{itemize}
\item if there is no occurrence of $s_n$ in $H$, then Proposition~\ref{proposition:walk_encoding} and formula~\eqref{lienDyck} give the generating function $\cat(x)/(1-x\cat(x))$.
\item if there is at least one occurrence of $s_n$ in $H$ (self-duality implies this number is odd), we replace $H_{s_n}$ by $(s_n,s_{n+1},\ldots,s_n)$ or $(s_{n+1}, s_n,\ldots,s_{n+1})$, giving rise to two heaps corresponding to elements of $\bar{D}_{n+1}^{FC}$, which are in bijection with walks in $\catsuffix^*_n$ ending at odd height. 
This gives the generating function $2 \catsuffix^o(x)/(1-x\cat(x))$, with the same decomposition as in 
Figure~\ref{fig:Decomp_C+}, in which $\catsuffix(x)$ is replaced by $\catsuffix^o(x)$.
\item if there is exactly one occurrence of $s_n$ in $H$, we must add the generating function for the self-dual FC heaps corresponding to type $D_{n+1}$ involutions, by replacing $s_n$ by $s_ns_{n+1}~(=s_{n+1}s_n)$. These elements are in bijection with walks in $\catsuffix^*_n$ ending at height $1$, whose generating function has already been computed (see Figure~\ref{fig:Decomp_x}). Taking into account an extra $t$ due to the occurrence of $s_ns_{n+1}$, 
we obtain the generating function $xt^2 \cat(x)\cat(tx)/(1-x\cat(x))$.
\end{itemize}

The generating function of right-peaks of type $D_{n+1}$ is the one for type $B_n$ multiplied by $t$, and it is equal to 
$$\frac{x^2t^4}{1-xt^2}\frac{\cat(x)\cat(tx)}{1-x\cat(x)}.$$
By summing up these four expressions we obtain formula~\eqref{typeDinv} after a few simplifications. Finally the functional equations for $\catsuffix(x)$ and $\catsuffix^o(x)$ can be computed thanks to classical decompositions of (prefixes of) Dyck walks.
\end{proof}

%%%%%%%%%%%%%%%%%%%%%%%%%%%%%%%%%%%%%
\subsection{Affine type $\aff{A}_{n-1}$ }\label{sub:GFatilde}
%%%%%%%%%%%%%%%%%%%%%%%%%%%%%%%%%%%%%

In this section, we will give the description of the heaps for FC involutions in $\aff{A}_{n-1}$, and  their generating function with respect to the length. 

%Moreover, we will use our classification to describe the connection with the cells in the associated Temperley--Lieb algebra as defined in~\cite{FanGreen_Affine}.

Recall from~\cite[Proposition~2.1]{BJN} that an element $w$ in $\Aaff_{n-1}$ is FC if and only if, in  any reduced decomposition of $w$, the occurrences of $s_i$ and $s_{i+1}$ alternate for all $i\in \{0,\ldots,n-1\}$, where we set $s_n=s_0$. This shows that FC heaps for the graph of type $\Aaff_{n-1}$ coincide exactly with alternating heaps (extended to the non-linear Coxeter diagram of $\aff{A}_{n-1}$).

Moreover, from Lemma~\ref{lemme:heapinvolution}, we immediately derive the following description.
\begin{proposition}\label{prop:descriptionAaffine}
A FC element $w\in\aff{A}_{n-1}$ is an involution if and only if $\H(w)$ is a self-dual alternating heap of type $\aff{A}_{n-1}$ .
\end{proposition}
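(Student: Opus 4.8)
The plan is to combine the two structural facts that have already been established in the excerpt: the general criterion from Lemma~\ref{lemme:heapinvolution} that an FC element is an involution exactly when its heap is self-dual, and the specific description of FC heaps in affine type $\Aaff_{n-1}$ recalled just before the statement, namely that they coincide with alternating heaps for the (cyclic) Coxeter graph. Since $w$ is assumed FC, the heap $\H(w)$ is always defined and is alternating; the only remaining content is therefore the equivalence ``involution $\iff$ self-dual.''

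First I would invoke Lemma~\ref{lemme:heapinvolution} directly: it applies to any Coxeter group $W$ with graph $\Gamma$, and in particular to $W=\aff{A}_{n-1}$ with its cyclic graph. This immediately gives that a FC element $w\in\aff{A}_{n-1}$ is an involution if and only if $\H(w)$ is self-dual. Next I would use the characterization recalled from~\cite[Proposition~2.1]{BJN}, which states that FC heaps in type $\aff{A}_{n-1}$ are precisely the alternating heaps for this graph. Combining these two equivalences yields that $w$ is an involution if and only if $\H(w)$ is a self-dual alternating heap of type $\aff{A}_{n-1}$, which is exactly the claim.

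The proof is thus essentially a two-line deduction from previously established results, and there is no real computational obstacle. The only point requiring a brief word of care is that Lemma~\ref{lemme:heapinvolution} and its proof were stated for general $\Gamma$, and the notion of self-duality (dual heap $(H,\geq,\epsilon)$, same labels) makes sense for the non-linear cyclic diagram of $\aff{A}_{n-1}$ exactly as in the linear case; the argument of that lemma, which only uses that $w$ is an involution iff $\overleftarrow{\mathbf w}\in\R(w)$ for a reduced word $\mathbf w$, goes through verbatim since it never used linearity of $\Gamma$. I would make this explicit to justify applying the lemma in the affine setting, and then conclude.

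\begin{proof}
Since $w\in\aff{A}_{n-1}$ is FC, the heap $\H(w)$ is well defined, and by the characterization recalled from~\cite[Proposition~2.1]{BJN} it is an alternating heap of type $\aff{A}_{n-1}$. Lemma~\ref{lemme:heapinvolution} holds for an arbitrary Coxeter graph $\Gamma$, its proof relying only on the fact that a FC element $w$ is an involution if and only if $\overleftarrow{\mathbf w}\in\R(w)$ for some (equivalently, any) reduced word $\mathbf w$ of $w$, which is equivalent to $\H(w)$ coinciding with its dual. Applying this to the cyclic graph of $\aff{A}_{n-1}$ shows that $w$ is an involution if and only if $\H(w)$ is self-dual. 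Combining the two statements, $w$ is an involution if and only if $\H(w)$ is a self-dual alternating heap of type $\aff{A}_{n-1}$.
\end{proof}
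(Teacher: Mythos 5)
Your proposal is correct and matches the paper's own (implicit) argument exactly: the paper derives this proposition ``immediately'' by combining the recalled characterization from \cite[Proposition~2.1]{BJN} (FC heaps of type $\aff{A}_{n-1}$ are precisely the alternating heaps on the cyclic graph) with Lemma~\ref{lemme:heapinvolution} (a FC element is an involution iff its heap is self-dual). Your extra remark that Lemma~\ref{lemme:heapinvolution} never uses linearity of $\Gamma$ is a sensible precaution, and is consistent with the fact that the lemma is stated for an arbitrary Coxeter graph.
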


We shall represent heaps of type $\aff{A}_{n-1}$ by depicting all (alternating) chains $H_{\{s_i,s_{i+1}\}}$ for $i=0,\ldots,n-1$. To be able to represent these chains in a planar fashion, we duplicate the set of $s_0$-elements and use one copy for the depiction of the chain $H_{\{s_0,s_{1}\}}$ and one copy for $H_{\{s_{n-1},s_0\}}$. This can be seen in Figure~\ref{fig:bijection_typeA}, where we also illustrate the restriction of the bijection with walks from~\cite{BJN} to self-dual alternating heaps corresponding to involutions.

\begin{figure}[h!]
\begin{center}
\includegraphics[width=12cm]{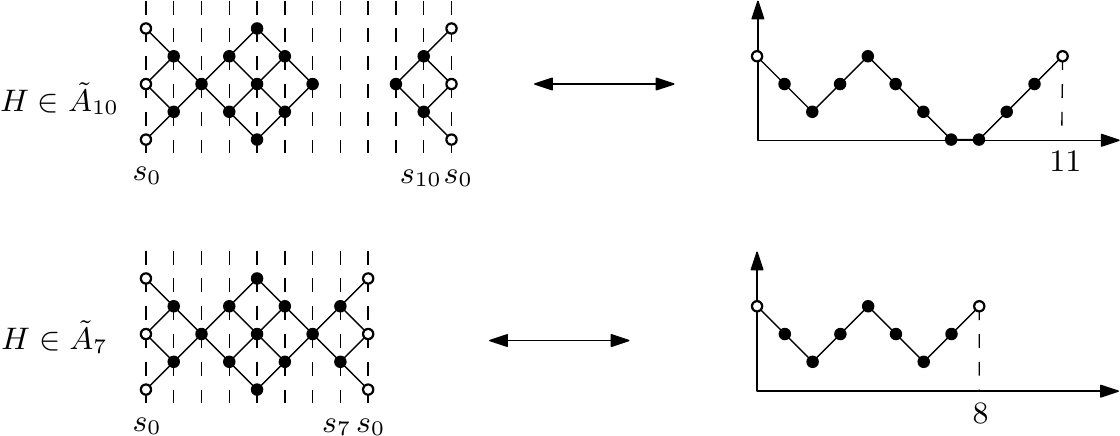}%\label{fig:Atilde_invol}
\end{center}
\caption{Two illustrations of the bijection between self-dual alternating heaps and walks in type $\aff{A}$.}\label{fig:Atilde_invol}
\end{figure}
Note that Proposition~\ref{prop:descriptionAaffine} immediately implies finiteness of the set of FC involutions in $\aff{A}_{n-1}$ if $n$ is odd, as the corresponding heaps $H$ can not have \emph{full support}, i.e. there exists necessarily an integer $i$ such that $|H_{s_i}|=0$. Indeed, thanks to self-duality and the alternating condition, one can see that there exists even in this case an integer $i$ such that $|H_{\{s_i, s_{i+1}\}}|=0$. Moreover, we are able to deduce the following result, where as usual $[x^n] F(x)$ stands for the coefficient of $x^n$ in the power series $F(x)$.

\begin{proposition}\label{prop:involutionsAaffines}
The generating function, with respect to the length, of fully commutative involutions of type $\aff{A}_{n-1}$, is equal to
\begin{equation}\label{eq:AaffineInv}
\frac{t^{n}\touch{\Cyl}_n(t)}{1-t^{n}}+[x^n]\frac{\cat(x)(1+tx^2\frac{\partial(x\cat)}{\partial x}(tx))}{1-x\cat(x)},
\end{equation}
where $\Cyl_n(t)$ is the generating function for walks in $\gensetdyck_n$  starting and ending at the same height. Moreover, if $n$ is odd,~\eqref{eq:AaffineInv} is a polynomial, while the corresponding growth sequence is ultimately periodic with period dividing $n$, if $n$ is even. Finally, in the latter case, periodicity starts at length $1+n^2/4$.
\end{proposition}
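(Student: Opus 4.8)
The plan is to first establish the closed form \eqref{eq:AaffineInv} by a dichotomy on heaps, and then deduce the polynomiality and periodicity statements from its shape. By Proposition~\ref{prop:descriptionAaffine} the FC involutions of $\aff{A}_{n-1}$ are exactly the self-dual alternating heaps $H$ of type $\aff{A}_{n-1}$, and just as in Proposition~\ref{proposition:walk_encoding}, now read cyclically, such a heap is recorded by its cyclic height profile $(h_0,\dots,h_{n-1})$ with $h_i=|H_{s_i}|$, where consecutive heights differ by $\pm1$ whenever positive and can agree only at height $0$; moreover $\ell(w)=|H|=\sum_i h_i$. I would then split these heaps according to whether they have \emph{full support}, i.e.\ $h_i\geq1$ for all $i$, or not.

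For a full-support heap the profile is a strictly positive closed $\pm1$ walk around the cycle, so $n$ must be even; this already forces the set to be empty, and the first term of \eqref{eq:AaffineInv} to vanish, when $n$ is odd. Setting $m=\min_i h_i\geq1$ and subtracting $m$ from every height yields a bijection onto pairs $(m,W)$ with $m\geq1$ and $W$ a closed $\pm1$ walk of length $n$ that touches the $x$-axis, i.e.\ one counted by $\touch{\Cyl}_n(t)$. Since $|H|=\haut(W)+nm$, summing $t^{|H|}$ over $m\geq1$ gives $t^n\touch{\Cyl}_n(t)/(1-t^n)$, the first term of \eqref{eq:AaffineInv}.

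The heaps without full support have an empty vertex, so their profile touches $0$ and can be cut open into a linear type-$A$ walk. Here I would distinguish the heaps in which $s_0$ is itself empty, which unroll directly into walks of $\catset^*_n$ and contribute $\cat(x)/(1-x\cat(x))$, from those in which $s_0$ sits strictly inside a maximal run of nonempty vertices. In the second situation self-duality forces that run to be a symmetric Dyck excursion straddling $s_0$, and remembering the position of $s_0$ within it amounts to marking a point in a Dyck path; this is precisely what the operator $tx^2\frac{\partial(x\cat)}{\partial x}(tx)$ records, producing the second term of \eqref{eq:AaffineInv}, which for fixed $n$ is a polynomial in $t$. I expect this bijection to be the main obstacle: one must pin down the symmetric decomposition forced by self-duality, justify the derivative factor, and check that a heap with several empty vertices is not counted twice.

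Granting \eqref{eq:AaffineInv}, the remaining assertions are formal. For $n$ odd no closed $\pm1$ walk of odd length exists, so $\touch{\Cyl}_n(t)=0$, the first term disappears, and the generating function equals the polynomial coming from the second term. For $n$ even, write the generating function as $P(t)+t^n\touch{\Cyl}_n(t)/(1-t^n)$ with $P$ the polynomial second term. The key quantitative input is that the largest total height of any length-$n$ walk occurring above is $n^2/4$, realised by the single ``mountain'' $0,1,\dots,n/2,\dots,1$; hence $\deg_t\touch{\Cyl}_n\leq n^2/4$ and $\deg_t P\leq n^2/4$. Writing $a_m$ for the number of FC involutions of length $m$, a one-line computation on $t^n/(1-t^n)$ gives $a_{m+n}-a_m=[t^{m+n}]P-[t^m]P+[t^m]\touch{\Cyl}_n$, and for every $m\geq 1+n^2/4$ all three coefficients vanish by the degree bounds. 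Thus $a_{m+n}=a_m$ for all $m\geq1+n^2/4$, which is exactly ultimate periodicity with period dividing $n$ and starting length $1+n^2/4$.
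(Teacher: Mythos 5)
Your route is the same as the paper's: identify FC involutions with self-dual alternating heaps (Proposition~\ref{prop:descriptionAaffine}), encode them by cyclic height profiles, split according to whether the profile touches the $x$-axis, and read off the two terms of \eqref{eq:AaffineInv}; your telescoping/degree-bound argument for periodicity is a correct, more detailed version of what the paper leaves implicit, and your reading of $tx^2\frac{\partial(x\cat)}{\partial x}(tx)$ as ``Dyck path with a marked point recording the position of $s_0$'' is exactly the right mechanism. (One technical point you gloss over, which the paper delegates to the bijection $\varphi'$ of \cite{BJN}: in the full-support case the \emph{existence} of a heap with a prescribed strictly positive cyclic profile is not immediate from Proposition~\ref{proposition:walk_encoding}, since acyclicity of the transitive closure must be checked on a cycle.)

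The genuine error is the claim that ``self-duality forces that run to be a symmetric Dyck excursion straddling $s_0$'', and the announced plan to ``pin down the symmetric decomposition forced by self-duality'' would fail. Self-duality is entirely spent in the profile characterization (cyclically adjacent multiplicities differ by exactly one unless both vanish) and forces no palindromicity: for $n=8$ the profile $(|H_{s_0}|,\dots,|H_{s_7}|)=(1,2,1,0,1,2,3,2)$ satisfies the characterization, hence is the heap of a FC involution of $\aff{A}_7$, yet the excursion through $s_0$, read from the empty vertex $s_3$, is $(0,1,2,3,2,1,2,1,0)$, which is not a palindrome. Your symmetry claim in fact contradicts the formula you then assert: if only symmetric excursions occurred, the second term would involve marked \emph{symmetric} Dyck paths rather than the full derivative $\frac{\partial(x\cat)}{\partial x}$. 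The correct statement needs no symmetry at all: cutting at the two empty vertices bounding the maximal run through $s_0$ decomposes such a heap bijectively into an \emph{arbitrary} strict excursion of some length $j+2$ carrying a marked interior point (the location of $s_0$, which is what the derivative counts, the substitution $x\mapsto tx$ producing the factor $t^{j+1}$ from the unit height shift of its $j+1$ interior points) together with an arbitrary walk in $\catset^*_{n-j-2}$ on the remaining vertices; the canonicity of this cutting also settles your double-counting worry. Finally, a point of care on which the paper itself is loose: the weight of a closed walk must be the sum of the heights of $P_0,\dots,P_{n-1}$ only (the ``area below the walk'' in the paper's proof, one summand per vertex of the cycle), not $\haut$ as defined for the open walks of the finite types, which also counts the repeated endpoint $P_n$; under the latter reading both your identity $|H|=\haut(W)+nm$ and your bound $\deg_t\touch{\Cyl}_n\le n^2/4$ are false (the closed walk $2,1,0,1,2$ for $n=4$ has $\haut=6>4$), whereas with the area convention your computation goes through.
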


\begin{proof}
From Proposition~\ref{prop:descriptionAaffine},  we know that FC involutions in $\aff{A}_{n-1}$ correspond to  self-dual alternating heaps. Next, by using the restriction to self-dual heaps of the bijection $\varphi'$ of~\cite[Theorem 2.2]{BJN}, we see that these involutions are in one-to-one correspondence with walks in $\genset_n$  starting and ending at the same height.  Moreover, the length of the corresponding involution is equal to the area below the walk. Such walks can be decomposed according to whether they hit the $x$-axis or not, as can be seen below.

\begin{center}
\includegraphics[width=10cm]{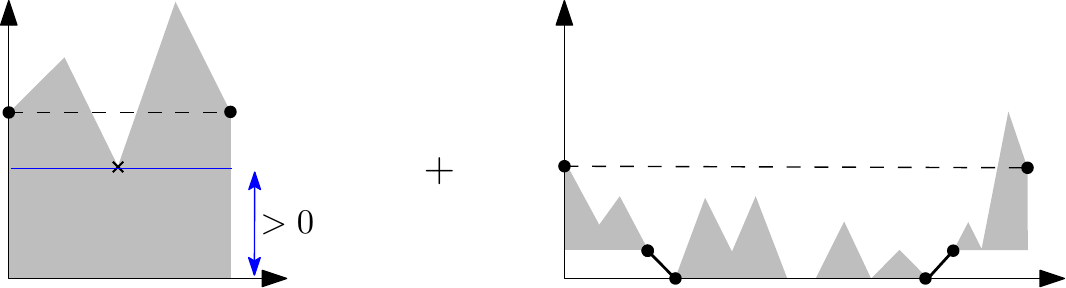}\label{fig:Decomp_Atilde_invol}
\end{center}
The first term in formula~\eqref{eq:AaffineInv} corresponds to the walks on the left. For the walks on the right, the central part is  in $\catset^*_j$ (for a $j$ between $0$ and $n-2$) whose generating function is given by~\eqref{lienDyck}, while the other two parts can be joined to form a path in $\catset_{n-j-2}$. The second term in formula~\eqref{eq:AaffineInv} is therefore obtained through classical considerations (see for example the proof of~\cite[Corollary 2.4]{BJN}).

Next, it is clear that if $n$ is odd, then  $\Cyl_n(t)=0$, while ultimate periodicity is obvious from formula~\eqref{eq:AaffineInv} if $n$ is even. Finally, in the latter case, the second term in~\eqref{eq:AaffineInv} is a polynomial with degree $n^2/4$, which corresponds to the area of the walk starting and ending at the origin (having therefore $n/2$ up steps followed by $n/2$ down steps), and this finishes the proof.
\end{proof}

%%%%%%%%%%%%%%%%%%%%%%%%%%%%%%%%%%%%%%%%%%
\subsection{Other classical affine types}\label{subsec:invaffBCD}
%%%%%%%%%%%%%%%%%%%%%%%%%%%%%%%%%%%%%%%%%%
By using the classification results of~\cite{BJN}, it is possible to compute all the generating functions for FC involutions $\bar{W}^{FC}(t)$ when $W$ is affine. Nevertheless, we will only give the results for types $\aff{C}$, $\aff{B}$  and $\aff{D}$, and we leave the exceptional types to the interested reader. In the following result, we give the general forms of these generating functions, which shows ultimate periodicity of the growth sequence in all cases. Note that the exact beginning of periodicity could also be computed by our methods.

Denote by  $\mathcal{F}^e_n$, (\emph{resp.} $\mathcal{F}^o_n$) the subfamily of $\mathcal{G}_n$ of paths ending at an even (\emph{resp.} odd) height. Denote also by $\mathcal{F}^{ee}_n$, (\emph{resp.} $\mathcal{F}^{oo}_n$) the subfamily of $\mathcal{G}_n$ of paths starting and ending at an even (\emph{resp.} odd) height. Recall that to each one of these families we associate a generating function as explained in Section~\ref{subsec:walks}. 

\begin{proposition}\label{prop:involutionsBCDaffines}
The generating functions for fully commutative involutions in types $\aff{C}_n$, $\aff{B}_{n+1}$  and $\aff{D}_{n+2}$ are respectively given by:
\begin{eqnarray}
&&\frac{t^{n+1}\touch{F}_n(t)}{1-t^{n+1}}+\frac{2t^{2n+3}}{1-t^2}+R_n(t)\label{eq:CaffineInv},\\
&&\frac{2t^{2n+2}\touch{F}^{o}_n(t)+2t^{n+1}\touch{F}^{e}_n(t)}{1-t^{2n+2}}+\frac{t^{2n+4}}{1-t}+\frac{t^{4n+2}}{1-t^{2n+1}}+T_n(t)\label{eq:BaffineInv},\\
&&\frac{4t^{2n+2}\touch{F}^{oo}_n(t)+4t^{n+1}\touch{F}^{ee}_n(t)}{1-t^{2n+2}}+\frac{2t^{2n+6}}{1-t^{2}}+\frac{2t^{4n+4}}{1-t^{2n+2}}+U_n(t),\label{eq:DaffineInv}
\end{eqnarray}
 where $R_n(t), T_n(t)$, and $U_n(t)$ are polynomials.
The corresponding growth sequences are ultimately periodic,  with period dividing $2n+2$, $(2n+1)(2n+2)$, and $2n+2$, respectively. 
\end{proposition}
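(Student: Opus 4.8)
The plan is to establish the three displayed generating functions by the same three-step strategy already employed for $\aff{A}_{n-1}$ in Proposition~\ref{prop:involutionsAaffines} and for the finite types in Proposition~\ref{prop:involutions}, and then to read off the periods from the denominators. First I would invoke the affine analogue of Proposition~\ref{prop:typeclassiques}, obtained from the classification of \cite{BJN} together with Lemma~\ref{lemme:heapinvolution}: a FC involution corresponds to a self-dual heap that is either alternating or a right-peak, the essential difference with the finite case being that the diagrams of $\aff{C}_n$, $\aff{B}_{n+1}$ and $\aff{D}_{n+2}$ possess \emph{two} distinguished ends. Consequently right-peaks may occur at either end, and for the branching ends the $s_n/s_{n+1}$ fork replacements of the finite type $D$ analysis (cf.\ the proof of Proposition~\ref{prop:involutions}) reappear. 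Each self-dual alternating heap is then encoded by a walk of $\mathcal{G}_n$ through Proposition~\ref{proposition:walk_encoding} (adding an initial or final step at an end as in Remark~\ref{rk:ajouterunpas} when needed), so that all the relevant families of walks are subfamilies of $\mathcal{G}_n$ and the Coxeter length equals the total height.

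Next I would isolate the mechanism behind periodicity, which is uniform across the three types. A walk of $\mathcal{G}_n$ staying strictly above the $x$-axis (a \emph{full-support} heap) is obtained in a unique way from a walk touching the axis by lifting it uniformly by some $k\geq 1$; since each lift raises all $n+1$ ordinates by one, it increases the length by exactly $n+1$. This produces a geometric factor $1/(1-t^{n+1})$ for $\aff{C}_n$, whose two ends are non-branching. For $\aff{B}_{n+1}$ and $\aff{D}_{n+2}$ the branching ends impose a parity constraint (described below) under which a single lift is not admissible and one must lift in pairs; this doubles the elementary length to $2n+2$ and yields the denominators $1-t^{2n+2}$. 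The bases of the lifting process, namely the walks touching the axis, supply precisely the numerators $\touch{F}_n$, $\touch{F}^{e}_n$, $\touch{F}^{o}_n$, $\touch{F}^{ee}_n$, $\touch{F}^{oo}_n$ of the statement, while all the remaining bounded (non-liftable) configurations are absorbed into the polynomial corrections $R_n(t)$, $T_n(t)$, $U_n(t)$. The independently growing right-peaks contribute the factor $1/(1-t^2)$ in types $\aff{C}_n$ and $\aff{D}_{n+2}$, each deepening of a peak adding length two; in $\aff{B}_{n+1}$ the corresponding extendable families attached to its double-bond end instead produce the remaining denominators $1-t$ and $1-t^{2n+1}$. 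The integer coefficients $2$ and $4$ occurring in the numerators record the independent choices at the ends, for instance the two even/odd possibilities at each of the one, resp.\ two, branching ends of $\aff{B}_{n+1}$ and $\aff{D}_{n+2}$.

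The type-specific bookkeeping is where the real work lies, and I would carry it out end by end, exactly parallel to the finite cases. For $\aff{C}_n$ one decomposes the self-dual alternating walks according to whether they hit the $x$-axis, just as in Proposition~\ref{prop:involutionsAaffines}, and then adds the right-peak contributions at each of the two double-bond ends, using Remark~\ref{rem:familyPeaks} to make the peak decomposition unique. For $\aff{B}_{n+1}$ (one branching end) and $\aff{D}_{n+2}$ (two branching ends), self-duality forces the number of fork elements to be odd, and whether the fork chain opens with $s_n$ or $s_{n+1}$ fixes the parity of the terminal height of the associated walk; this is the origin of the families $\mathcal{F}^{e}_n,\mathcal{F}^{o}_n$ and $\mathcal{F}^{ee}_n,\mathcal{F}^{oo}_n$. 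The hard part will be to make the lifting argument compatible with these parity constraints: a single lift swaps the even and odd families, so the extendable heaps must be grouped so as to return to the same family only after a lift of length $2n+2$, and one must verify that the base configurations are split correctly between the periodic numerators $\touch{F}^{e}_n,\touch{F}^{o}_n$ (resp.\ $\touch{F}^{ee}_n,\touch{F}^{oo}_n$) and the polynomials $T_n(t),U_n(t)$.

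Finally, once the three generating functions are in the displayed rational form, the periodicity statement is immediate: a series of the form $R(t)+\sum_{j} P_j(t)/(1-t^{d_j})$ with $R,P_j$ polynomials has an ultimately periodic coefficient sequence whose period divides $\mathrm{lcm}_j(d_j)$. Reading off the denominators then gives $\mathrm{lcm}(n+1,2)\mid 2n+2$ for $\aff{C}_n$; $\mathrm{lcm}(2n+2,1,2n+1)=(2n+1)(2n+2)$ for $\aff{B}_{n+1}$, the equality holding because $2n+1$ and $2n+2$ are coprime; and $\mathrm{lcm}(2n+2,2)=2n+2$ for $\aff{D}_{n+2}$, which are exactly the asserted periods.
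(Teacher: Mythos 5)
Your treatment of the alternating contributions is essentially the paper's own argument and is sound: encode self-dual alternating heaps by walks (Proposition~\ref{proposition:walk_encoding}), split according to whether the walk meets the $x$-axis, and note that the strictly positive walks are uniform lifts of touching walks, each lift costing $n+1$ in length, with the parity constraint at a forked end forcing lifts in pairs — this correctly produces the terms with numerators $\touch{F}_n$, $\touch{F}^{e}_n$, $\touch{F}^{o}_n$, $\touch{F}^{ee}_n$, $\touch{F}^{oo}_n$ and denominators $1-t^{n+1}$, $1-t^{2n+2}$. The genuine gap is in your identification of the \emph{second} infinite family. You assert that in the affine classification a self-dual FC heap is ``either alternating or a right-peak,'' with peaks allowed at both ends, and you derive the remaining geometric terms from peaks ``deepening by two.'' Both claims are wrong. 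In the classification of~\cite{BJN} that the paper invokes, the FC heaps of type $\aff{C}_n$ fall into five families, and the peak-type families are \emph{finite} for fixed $n$: a peak at an end forces (via the analogue of condition (b) in the definition of right-peaks) bounded chain sizes across the whole heap, so for fixed $n$ these families contribute only to the polynomials $R_n(t)$, $T_n(t)$, $U_n(t)$. The second infinite family consists of the \emph{zigzag} heaps, i.e.\ heaps $\H(\mathbf{w})$ where $\mathbf{w}$ is a finite factor of the infinite periodic word $(u_ns_1s_2\cdots s_{n-1}s_ns_{n-1}\cdots s_2s_1)^\infty$; these wind back and forth across the entire diagram and can be extended indefinitely, and the missing geometric terms come from counting which zigzags are self-dual.

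Concretely, in $\aff{C}_n$ there are exactly $2$ self-dual zigzags of each length $2n+3+2k$, $k\geq 0$ (a left-right symmetric pair), and none of other lengths, giving $2t^{2n+3}/(1-t^2)$; in $\aff{B}_{n+1}$ there is generically $1$ self-dual zigzag per length $\geq 2n+4$, except along the arithmetic progression $4n+2+k(2n+1)$ where there are $2$, giving $t^{2n+4}/(1-t)+t^{4n+2}/(1-t^{2n+1})$; the type $\aff{D}_{n+2}$ count is analogous. Your proposed mechanism cannot recover these: a peak localized at an end of a fixed diagram has bounded size, so it can never produce a factor $1/(1-t^2)$, and nothing in your setup generates the period $2n+1$ in type $\aff{B}_{n+1}$ — you simply posit ``extendable families attached to its double-bond end'' with the right denominators, which is reverse-engineering the statement rather than proving it. Since the periodicity assertion, in particular the factor $2n+1$ in the period $(2n+1)(2n+2)$, rests exactly on these zigzag terms, the proof is incomplete without this analysis.
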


\begin{proof}
From~\cite[Theorem 3.4]{BJN} and Lemma~\ref{lemme:heapinvolution}, FC involutions in $\aff{C}_{n}$ correspond to self-dual heaps belonging to five families, among which only two are infinite. It is therefore enough to focus on these two sets, the first of which being made of alternating heaps. From Proposition~\ref{proposition:walk_encoding}, such elements correspond to walks in $\genset_n$; their generating function is given by
$$\frac{t^{n+1}\touch{F}_n(t)}{1-t^{n+1}}+[x^n]\frac{(1+tx\catsuffix(tx))^2}{1-x\cat(x)},$$
by considering as before the decomposition illustrated below.

\begin{figure}[h!]
\begin{center}
\includegraphics[width=12cm]{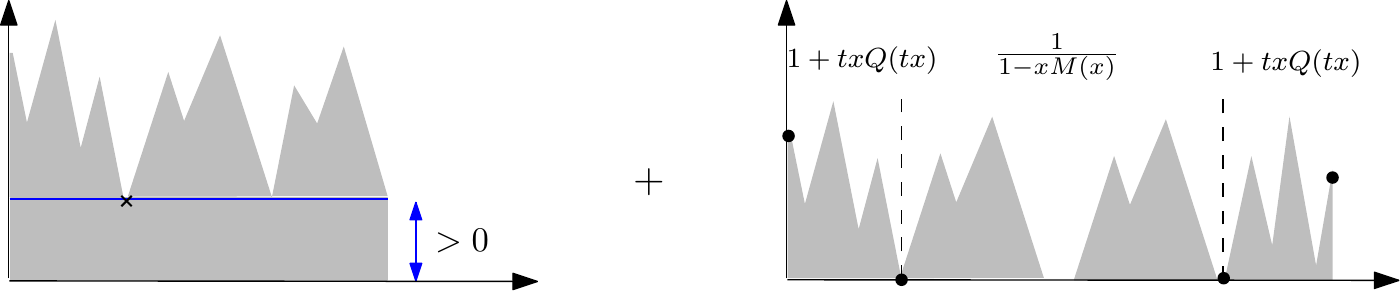}
\end{center}
\caption{Decomposition of walks in $\genset_{n}$.}\label{fig:Decomp_C_affine}
\end{figure}

The elements of the second infinite set are  the self-dual zigzags; they are  self-dual heaps of the form $H=\H(\mathbf{w})$ where $\mathbf{w}$ is a finite factor of the infinite periodic word $\left(u_ns_1s_2\cdots s_{n-1}s_ns_{n-1}\cdots s_2s_1\right)^\infty$ such that $|H_{s_i}| \geq 4$ for at least one $i\in\{1,\dots,n-1\}$.  The minimal length  is equal to $2n+3$, as can be seen in Figure~\ref{fig:Zigzag}, left. Moreover,  there are $2$ self-dual heaps  for each length $2n+3+2k$ (with $k\geq0$) by left-right symmetry, and $0$ for other lengths, yielding the generating function $$\displaystyle \frac{2t^{2n+3}}{1-t^2}.$$
Summing these  expressions and adding the polynomial generating functions  corresponding to self-dual heaps in the three remaining finite families from~\cite{BJN} gives~\eqref{eq:CaffineInv}.

For FC involutions of type $\aff{B}_{n+1}$, and from~\cite{BJN} and Lemma~\ref{lemme:heapinvolution}, it suffices to examine among type $\aff{B}_{n+1}$ alternating and zigzag heaps which ones are self-dual. Concerning alternating ones, we perform the following substitution in each self-dual alternating heap $H$ of type $\aff{C}_{n}$: replace the occurrence of  $(s_n,s_n,s_n,\ldots)$ by $(s_n,s_{n+1},s_n,\ldots)$ or $(s_{n+1},s_n,s_{n+1},\ldots)$, together with the replacement $s_n\mapsto s_n s_{n+1}=s_{n+1}s_n$ in the case of exactly one occurrence of $s_n$. The self-duality condition forces $|H_{s_n}|$ to be either odd, or equal to $0$. Therefore the generating function for self-dual alternating heaps of type $\aff{B}_{n+1}$ takes the form
$$2\Geno_n(t)-[x^n]\frac{Q(x)}{1-xM(x)} +[x^n]\left(t^2\catsuffix(tx)+\frac{xt^2\catsuffix(x)\cat(tx)}{1-x\cat(x)}\right),$$
where $\Geno_n(t)$ is the generating function for walks in $\genset_n$ ending at odd height. Moreover, the only non-polynomial term in this expression is $\Geno_n(t)$. The corresponding walks can be decomposed as in Figure~\ref{fig:Decomp_C_affine} in which the parity of $h$ has to be taken into account, and the last term $1+tx \catsuffix(tx)$  has to be replaced by $\catsuffix^o(x)$. This yields 
$$\Geno_n(t)=\frac{t^{2n+2}\touch{F}^{o}_n(t)}{1-t^{2n+2}}+\frac{t^{n+1}\touch{F}^{e}_n(t)}{1-t^{2n+2}}+[x^n]\frac{(1+tx \catsuffix(tx))\catsuffix^o(x)}{1-x\cat(x)}.$$

\begin{figure}[h!]
\begin{center}
\includegraphics[width=12cm]{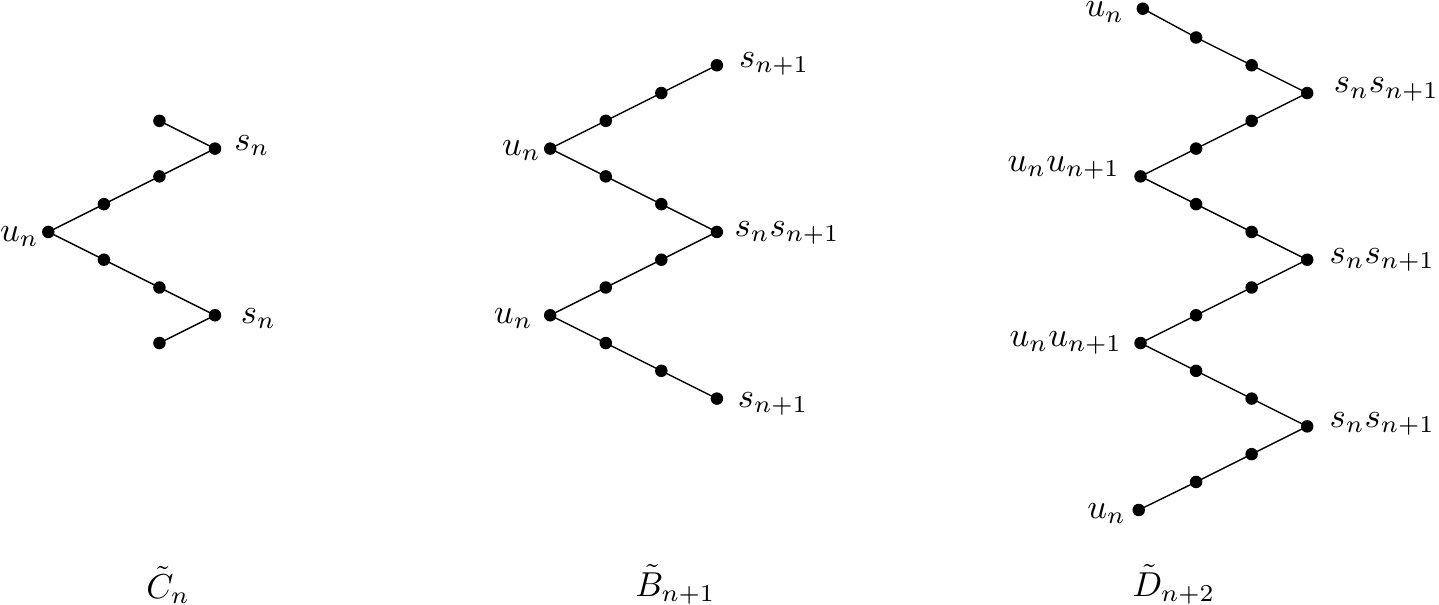}
\end{center}
\caption{Some self-dual zigzag heaps.}\label{fig:Zigzag}
\end{figure}

The minimal length for self-dual zigzags of type $\aff{B}_{n+1}$ is equal to $2n+4$. Moreover, for any $k\geq 0$ there are $2$ self-dual heaps for each length $4n+2+k(2n+1)$: these are the heaps starting and ending at $s_n$ or $s_{n+1}$ (see Figure~\ref{fig:Zigzag}, middle). For other lengths there is only $1$ such heap, yielding the generating function 
$$\displaystyle \frac{t^{2n+4}}{1-t}+\frac{t^{4n+2}}{1-t^{2n+1}}.$$
 It is then easy to derive formula~\eqref{eq:BaffineInv} by adding the previous generating functions to the polynomials corresponding to self-dual heaps in the three remaining (finite) families of~\cite{BJN}.

Finally, for FC involutions of type $\aff{D}_{n+2}$, we again consider the self-dual heaps among alternating and zigzag ones.  The alternating self-dual heaps correspond to walks  in $\genset_n$ starting and ending at odd height. Moreover, each such walk gives rise to $4$ FC involutions, according to the choices of $u_n$ or $u_{n+1}$, and $s_n$ or $s_{n+1}$. Among these walks, only the ones staying above the $x$-axis correspond to an infinite number of elements, their generating function is given by:
$$\frac{4t^{2n+2}\touch{F}^{oo}_n(t)+4t^{n+1}\touch{F}^{ee}_n(t)}{1-t^{2n+2}},$$
as can be seen as before through classical walk decomposition. 
The minimal length for self-dual zigzags is $2n+6$. For any $k\geq 0$,  by left-right symmetry there are $4$ such elements for each length $4n+4+k(2n+2)$ (see Figure~\ref{fig:Zigzag}, right), $2$ for other even lengths, and $0$ for odd lengths, yielding the generating function 
$$\displaystyle \frac{2t^{2n+6}}{1-t^2}+\frac{2t^{4n+4}}{1-t^{2n+2}}.$$
\end{proof}

%%%%%%%%%%%%%%%%%%%%%%%%%%%%%%%%%%%%%
\section{Cells for FC elements in type~$\aff{A}$}
\label{sec:cells}
%%%%%%%%%%%%%%%%%%%%%%%%%%%%%%%%%%%%%
This section is not about enumeration, but aims at illustrating how the representation of FC elements  as heaps can be useful in other ways.

In~\cite{FanGreen_Affine}, Fan and Green study the affine
Temperley--Lieb algebra ${\rm TL}(\aff{A}_{n-1})$. It is a quotient of
the type $\aff{A}_{n-1}$ Hecke algebra, and can be defined by generators $E_{s_i}$ for
$i\in \{0,1,\ldots,{n-1}\}$ and relations:
 \[\begin{cases}
 E_{s_i}^2=E_{s_i},\\
 E_{s_i}E_{s_{j}}E_{s_i}=E_{s_i}\quad\text{if $i= j\pm 1$ modulo $n$,}\\
 E_{s_i}E_{s_j}=E_{s_j}E_{s_i}\quad\text{if $i\neq j\pm 1$ modulo $n$.}
\end{cases}
\]
Note that the first relation involves usually an extra
parameter $\alpha$, but this has no incidence on the results we will describe.
The algebra ${\rm TL}(\aff{A}_{n-1})$ has a linear basis $(E_w)$ indexed by {\rm FC} elements in $\aff{A}_{n-1}$: one can define unambiguously $E_w=E_{s_{i_1}}\cdots
E_{s_{i_k}}$ where $s_{i_1}\cdots s_{i_k}$ is any reduced expression
of the FC element $w$.

Using this algebra, there are natural relations on the set of FC elements. 

\begin{definition}\label{defi:cells}
Let $w,w'$ be FC elements of type $\aff{A}_{n-1}$. One writes
$w\stackrel{R}{\leqslant} w'$ if there exists a {\rm FC} element $x$ such that
$E_{w'}=E_wE_x$, and $w\stackrel{R}{\sim} w'$ if
$w\stackrel{R}{\leqslant} w'$ and $w'\stackrel{R}{\leqslant} w$.
\end{definition}

Since $\stackrel{R}{\leqslant}$ is a preorder, $\stackrel{R}{\sim}$ is
an equivalence relation whose classes are called {\em right cells}. These are analogues of the famous Kazhdan--Lusztig cells which give representations of the Hecke algebras, in the arguably simpler context of the Temperley--Lieb  algebra ${\rm TL}(\aff{A}_{n-1})$.

\begin{theorem}\cite[Theorem 3.5.1]{FanGreen_Affine}
\label{theo:cells_involutions}
Each right cell contains at most one involution. Right cells with no
involution occur only when $n$ is even.
\end{theorem}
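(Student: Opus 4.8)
The plan is to recast the algebraic statement entirely in the language of heaps, using the dictionary established in Section~\ref{sec:FCihw}. Recall that FC elements of $\aff{A}_{n-1}$ correspond to alternating heaps (via Proposition~\ref{prop:wordtoheap} and the alternating characterization), and that by Lemma~\ref{lemme:heapinvolution} an FC involution corresponds to a \emph{self-dual} heap. The relation $w\stackrel{R}{\leqslant}w'$ asserts that $E_{w'}=E_wE_x$ for some FC element $x$; the first task is to understand this multiplication combinatorially. Since $E_w$ is read off from a reduced word for $w$, the product $E_wE_x$ corresponds to concatenating reduced words and then reducing modulo the Temperley--Lieb relations $E_{s_i}^2=E_{s_i}$ and $E_{s_i}E_{s_j}E_{s_i}=E_{s_i}$. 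At the level of heaps, I would interpret $w\stackrel{R}{\leqslant}w'$ as saying that $\H(w)$ embeds into $\H(w')$ as a \emph{down-set} (order ideal) in a label-preserving way, with the complement being $\H(x)$; this is the natural heap-theoretic avatar of left-divisibility in the monoid picture.

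The heart of the argument is to characterize the \emph{right cell} of an involution. First I would show that if $w$ is an FC involution, then the self-dual heap $\H(w)$ is essentially determined, up to the cell equivalence, by a single invariant, namely the multiset of column heights $(|\H(w)_{s_0}|,\ldots,|\H(w)_{s_{n-1}}|)$, or equivalently by the associated cyclic walk from the bijection of Section~\ref{sub:GFatilde}. The key point is that two FC elements are in the same right cell if and only if they have the same \emph{left} boundary data (the generators that can be removed from the left, and the way the heap ``looks from below''), while self-duality pins the heap down once this boundary is fixed. Concretely, I would prove: (i) every FC element $w'$ lies in the same right cell as the involution obtained by symmetrizing its left profile, \emph{provided} such a self-dual heap exists with the prescribed left boundary; and (ii) distinct involutions have distinct left profiles, hence lie in distinct cells. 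Step (i) gives ``at most one involution per cell,'' and the failure of existence in~(i) is exactly what produces involution-free cells.

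The second statement then reduces to a parity count. A self-dual heap with a prescribed left profile exists precisely when one can complete a given walk-prefix to a full closed self-dual configuration; by the analysis following Proposition~\ref{prop:descriptionAaffine}, self-duality together with the alternating condition forces a constraint on the total support. I expect the obstruction to mirror the finiteness phenomenon already observed there: when $n$ is odd, every self-dual alternating heap fails to have full support, so the relevant parity constraint is automatically satisfiable; when $n$ is even, certain left profiles (those of ``full-support'' type) cannot be symmetrized into a self-dual heap, and these give rise to right cells containing no involution. Thus I would isolate a parity invariant attached to each right cell and show it must be even for an involution to exist, yielding that involution-free cells occur exactly when $n$ is even.

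The main obstacle will be the precise combinatorial translation of the relation $w\stackrel{R}{\leqslant}w'$ and especially of the equivalence $\stackrel{R}{\sim}$ into heap language, since the Temperley--Lieb relation $E_{s_i}E_{s_j}E_{s_i}=E_{s_i}$ is not length-preserving and can collapse heaps in nonobvious ways. Getting a clean description of which heap moves implement right-multiplication, and proving that the left profile is a \emph{complete} invariant of the right cell (not merely an invariant), is where the real work lies; this is presumably what~\cite{FanGreen_Affine} establishes with more effort, and the contribution here is that the heap picture makes the symmetrization map and the parity obstruction transparent.
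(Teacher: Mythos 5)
Your high-level plan --- a canonical representative per cell, a symmetrization producing the involution, and a parity obstruction for $n$ even --- has the right shape and matches the paper's in spirit, but both of your concrete mechanisms fail. The translation of $w\stackrel{R}{\leqslant}w'$ as ``$\H(w)$ embeds in $\H(w')$ as an order ideal with complement $\H(x)$'' is false: from $E_{s_i}E_{s_{i+1}}E_{s_i}=E_{s_i}$, taking $w=s_is_{i+1}$ and $x=s_i$ gives $s_is_{i+1}\stackrel{R}{\leqslant}s_i$, and the two-element heap of $s_is_{i+1}$ certainly does not embed as a down-set in the one-element heap of $s_i$. Because multiplication can collapse heaps, $\stackrel{R}{\leqslant}$ admits no such monotone description, and the paper never attempts one: it works only with a \emph{sufficient} condition for cell-equivalence, namely the reduction move $w\stackrel{R}{\rightarrow}ws_i$ of Definition~\ref{defi:reduction} (delete a maximal element labelled $s_i$ when $s_{i-1}$ or $s_{i+1}$ then becomes a right descent), which preserves cells by Lemma~\ref{lemma:reduction_cells}. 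Iterated reduction reaches an \emph{irreducible} element, whose heap is explicitly characterized, and the genuinely hard statement --- distinct irreducibles lie in distinct cells --- is imported from~\cite{FanGreen_Affine}; from this, each cell contains exactly one irreducible, and the involution of a cell (when it exists) is rebuilt from the irreducible heap $H$ by stacking the dual of $H^{bottom}$ on top of $H$ as an upper ideal.

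The second gap is that your cell invariant, the ``left profile,'' is never defined, and the completeness claim you defer cannot be borrowed from Fan--Green, since their separation theorem concerns irreducible elements, not any boundary profile. No natural candidate works: already in type $A$ (hence in $\aff{A}_{n-1}$ for $n\geq 5$) the distinct FC involutions $s_2$ and $s_2s_1s_3s_2$ have the same left descent set $\{s_2\}$, so any profile that separates cells must be strictly finer than the descent set, while anything recording the full lower structure of the heap is just the element itself, which is tautologically complete but proves nothing. Finally, your parity argument substitutes the wrong dichotomy: the observation after Proposition~\ref{prop:descriptionAaffine} (for odd $n$ a self-dual alternating heap cannot have full support) is about involutions, whereas what is needed is that \emph{irreducible} elements of full support exist only when $n$ is even, and that among these exactly the ones whose top part $H^{top}$ consists of an even number of factors $R_0,R_1$ fail to be symmetrizable --- dualizing $H^{bottom}$ and stacking it does not yield an FC heap. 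It is this finer criterion on irreducibles, not the finiteness phenomenon for odd $n$, that produces and characterizes the involution-free cells.
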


In the sequel we wish to show how the use of heaps can illustrate this result and make it more precise. 
To this aim we first describe in terms of heaps the so-called {\em reduction} of {\rm FC} elements used in \cite{FanGreen_Affine}.

\begin{definition}[Reduction]\label{defi:reduction}
Let $w$ be a FC element of type $\aff{A}_{n-1}$, and $s_i\in
{\rm Des}_R(w)$. Then $w$ can be reduced to $ws_i$ if at least one of
$s_{i-1},s_{i+1}$ belongs to ${\rm Des}_R(ws_i)$. We will write
$w\stackrel{R}{\rightarrow} ws_i$.
\end{definition}

Reduction is easy to illustrate on heaps: $w\stackrel{R}{\rightarrow}
ws_i$ if $s_i$ labels a maximal element in $\H(w)$ and if either
$s_{i-1}$ or $s_{i+1}$ labels a maximal element in $\H(ws_i):=\H(w)\setminus\{s_i^{top}\}$, where $s_i^{top}$ is the maximal element of the chain $H_{s_i}$. We refer the reader to  Figure~\ref{fig:irr1} for a chain of successive reductions.

\begin{figure}[!ht]
\begin{center}
\includegraphics[width=11cm]{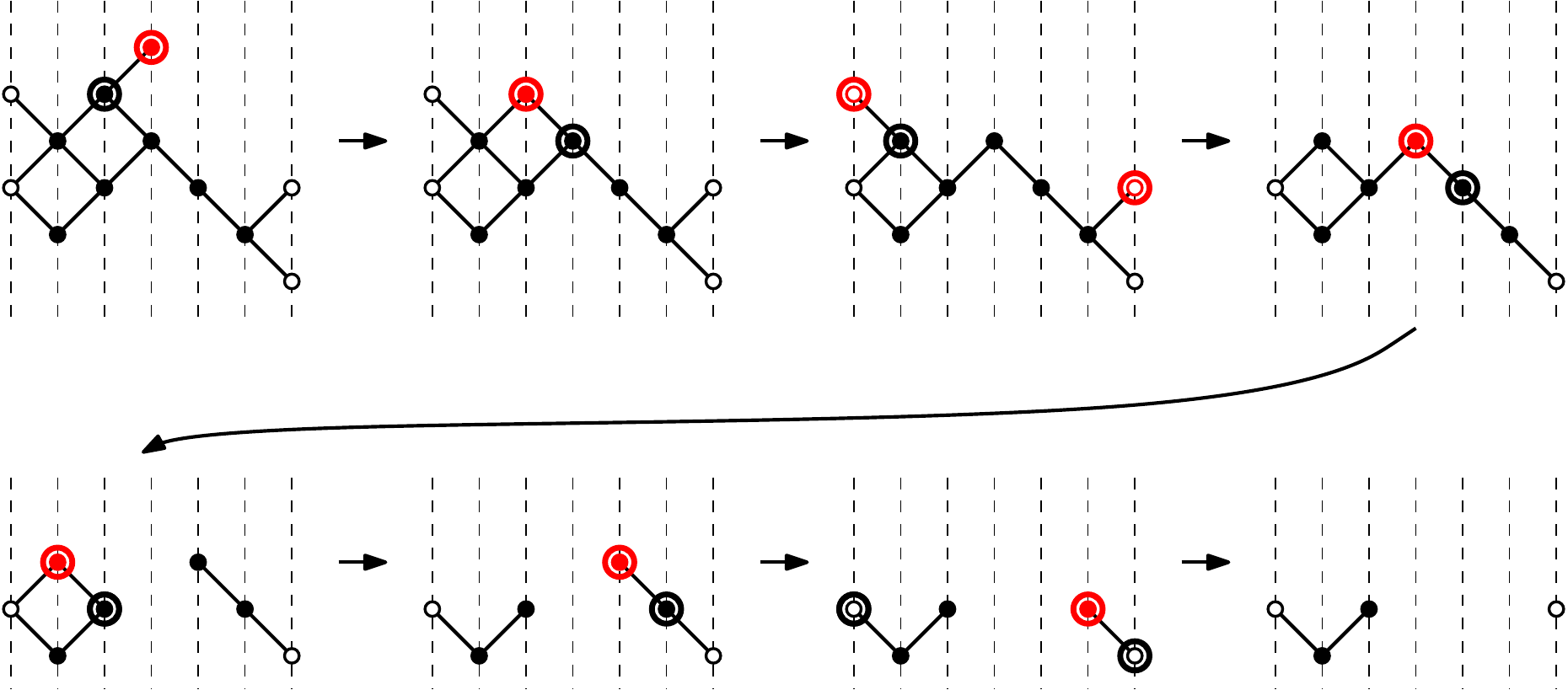}
\caption{Successive reductions. \label{fig:irr1}}
\end{center}
\end{figure}

Reduction is useful to investigate right cells, thanks to the following simple result.
\begin{lemma}
\label{lemma:reduction_cells}
If $w\stackrel{R}{\rightarrow} ws_i$, then $w\stackrel{R}{\sim} ws_i$.
In other words, if $w$ reduces to $ws_i$, then both belong to the same right
cell.
\end{lemma}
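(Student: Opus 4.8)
The plan is to unfold the definition of $\stackrel{R}{\sim}$ and establish the two relations $ws_i\stackrel{R}{\leqslant}w$ and $w\stackrel{R}{\leqslant}ws_i$ separately, each by exhibiting an explicit FC element $x$ realizing the required factorization in ${\rm TL}(\aff{A}_{n-1})$. The first relation is immediate: since $s_i\in{\rm Des}_R(w)$, a reduced word for $w$ is obtained from one for $ws_i$ by appending $s_i$, so $E_w=E_{ws_i}E_{s_i}$ by definition of the basis $(E_w)$; taking $x=s_i$, which is a single generator and hence FC, gives $ws_i\stackrel{R}{\leqslant}w$. Note that this direction uses only that $s_i$ is a descent and not the full reduction hypothesis.

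For the converse relation $w\stackrel{R}{\leqslant}ws_i$ I would use the reduction hypothesis, which supplies a neighbour of $s_i$ that is a descent of $ws_i$; say $s_{i+1}\in{\rm Des}_R(ws_i)$ (the case $s_{i-1}$ is entirely symmetric, indices being read modulo $n$). Setting $v:=ws_is_{i+1}$, the length count $\ell(v)=\ell(ws_i)-1=\ell(w)-2$ shows that $v\,s_{i+1}\,s_i$ is a reduced expression for $w$, whence $E_w=E_vE_{s_{i+1}}E_{s_i}$. Multiplying on the right by $E_{s_{i+1}}$ and applying the Temperley--Lieb relation $E_{s_{i+1}}E_{s_i}E_{s_{i+1}}=E_{s_{i+1}}$, valid because $i+1\equiv i\pm1$ modulo $n$, to the last three factors yields
$$E_wE_{s_{i+1}}=E_vE_{s_{i+1}}E_{s_i}E_{s_{i+1}}=E_vE_{s_{i+1}}=E_{ws_i},$$
the last equality holding because $ws_i=v\,s_{i+1}$ is reduced and FC. Thus $x=s_{i+1}$ witnesses $w\stackrel{R}{\leqslant}ws_i$, and combining the two directions gives $w\stackrel{R}{\sim}ws_i$.

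The only points requiring care---and where a reader might want detail---are the bookkeeping facts that $ws_i$ and $v$ are again FC elements, so that $E_{ws_i}$ and $E_v$ are genuine basis vectors, and that the displayed words are reduced, so that the $E$-products coincide with the corresponding basis elements. Both follow from the heap description used throughout the paper: $\H(ws_i)=\H(w)\setminus\{s_i^{top}\}$ is obtained by deleting a maximal element of an alternating heap and is therefore again alternating, hence FC, and likewise for $v$; the length identities are exactly the statements that $s_i\in{\rm Des}_R(w)$ and $s_{i+1}\in{\rm Des}_R(ws_i)$. Since these verifications are routine, the genuine content of the lemma is the single application of the braid-type relation $E_{s_{i+1}}E_{s_i}E_{s_{i+1}}=E_{s_{i+1}}$, which is precisely the step at which the reduction hypothesis on the neighbouring descent is used; this is the only real obstacle, and it is resolved by the computation above.
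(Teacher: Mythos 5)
Your proof is correct. The paper itself states Lemma~\ref{lemma:reduction_cells} without proof (it is presented as a ``simple result'' following from the Temperley--Lieb relations), and your argument supplies exactly the intended justification: $E_w=E_{ws_i}E_{s_i}$ gives $ws_i\stackrel{R}{\leqslant}w$ from the descent alone, while writing $E_w=E_vE_{s_{i+1}}E_{s_i}$ with $v=ws_is_{i+1}$ and applying $E_{s_{i+1}}E_{s_i}E_{s_{i+1}}=E_{s_{i+1}}$ yields $E_wE_{s_{i+1}}=E_{ws_i}$, hence $w\stackrel{R}{\leqslant}ws_i$; the bookkeeping that $ws_i$ and $v$ are FC and that the displayed words are reduced is handled correctly via the heap/length arguments you give.
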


A FC element $w$ is called {\em irreducible} if it can not be
reduced to  $ws_i$ for any $i$. It is relatively easy to give a
characterization of the heaps of such elements. Recall that the
support of the FC element $w$ is the set of $s_i$, $i\in\{0,\ldots,n-1\}$, which occur in a
reduced decomposition of $w$. 
%We write $I_1,\ldots,I_k$ for the
%maximal cyclic intervals which form the support.

\begin{proposition}
 A FC element $w$ is irreducible if and only if its heap 
satisfies $s_i^{top}>s_{i+1}^{top}<s_{i+2}^{top}>s_{i+3}^{top}<\ldots
s_{i+2m-1}^{top}<s_{i+2m}^{top}$ for all $i,m$ satisfying:
\begin{itemize}
\item if $w$ has full support, then  $n$ is
even, $m=n/2$ and $i=0$ or $1$, 
\item otherwise $\{i,i+1,\ldots,i+2m\}$ is any maximal (cyclic) interval of the support
of $w$.
\end{itemize}
\end{proposition}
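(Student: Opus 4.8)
The plan is to characterize irreducible heaps by translating Definition~\ref{defi:reduction} directly into a condition on the relative heights of consecutive maximal elements, and then to separate the full-support and non-full-support cases. First I would observe that $w$ is irreducible precisely when, for every $s_i\in{\rm Des}_R(w)$ (equivalently, every $s_i$ labeling a maximal element of $\H(w)$), \emph{neither} $s_{i-1}$ nor $s_{i+1}$ labels a maximal element of $\H(ws_i)=\H(w)\setminus\{s_i^{top}\}$. The key local translation is this: since adjacent labels alternate along the chain $H_{\{s_{i-1},s_i\}}$, removing $s_i^{top}$ exposes $s_{i-1}^{top}$ as a new maximal element exactly when $s_{i-1}^{top}$ lay just below $s_i^{top}$ in that chain, i.e. when $s_{i-1}^{top}<s_i^{top}$ in the heap order; and symmetrically for $s_{i+1}$. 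Hence $s_i$ is \emph{non-reducible} if and only if $s_{i-1}^{top}>s_i^{top}$ and $s_{i+1}^{top}>s_i^{top}$ whenever those neighbors are in the support, meaning every maximal-labeled $s_i^{top}$ that can be reduced away is a strict local minimum among its neighbors, so irreducibility forbids exactly the configurations where $s_i^{top}$ dominates a neighbor.

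Next I would reformulate this neighbor-by-neighbor condition as the global zigzag inequality in the statement. Reading along a maximal cyclic interval $\{i,i+1,\ldots,i+2m\}$ of the support, the non-reducibility of each interior generator forces the top elements to strictly alternate up and down, producing the chain $s_i^{top}>s_{i+1}^{top}<s_{i+2}^{top}>\cdots<s_{i+2m}^{top}$; I would check carefully that the two endpoints of a non-full interval are automatically unconstrained on their outer side (their missing neighbor is not in the support), so the inequality must begin and end with a ``$>$'' pattern emanating from the endpoints, which pins down the parity and explains why the interval length $2m+1$ (an odd number of generators, $2m$ edges) is the right normalization. Conversely, I would verify that any heap satisfying these inequalities admits no reduction, since at each maximal $s_i^{top}$ at least one neighbor sits below it and thus is already maximal in $\H(w)$ before removal, blocking the defining condition of Definition~\ref{defi:reduction}.

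The delicate case, and the main obstacle, is full support, where the diagram of $\aff{A}_{n-1}$ is a cycle and there are no endpoints to anchor the alternation. Here I would argue that a consistent strict zigzag $s_0^{top}>s_1^{top}<s_2^{top}>\cdots$ can close up around the $n$-cycle only if $n$ is even, since an odd cycle would force some $s_i^{top}$ to be simultaneously greater and less than a neighbor, a contradiction; this is exactly where the parity restriction $n$ even, $m=n/2$ enters, and the two admissible phases correspond to starting the ``$>$'' at index $i=0$ or $i=1$. I would need to confirm that full support together with self-consistency of the cyclic inequalities genuinely forces a single global alternation rather than several disconnected zigzags, using that the alternating (FC) condition makes each chain $H_{\{s_i,s_{i+1}\}}$ totally ordered so that the relative order of $s_i^{top}$ and $s_{i+1}^{top}$ is always well defined.

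Finally I would assemble the two cases into the stated equivalence, taking care that the quantifier ``for all $i,m$'' is read as a disjunction over the maximal cyclic intervals of the support (with the full-support case being the single interval wrapping the whole cycle). I expect the forward direction (irreducible implies the inequalities) to be routine once the local translation of reduction is fixed, while the converse and especially the full-support parity argument require the most care; a clean way to handle the latter is to contrast it with Proposition~\ref{prop:involutionsAaffines}, whose proof already exploits that self-dual full-support heaps force $n$ even, giving independent confirmation that the odd case cannot support the closed zigzag.
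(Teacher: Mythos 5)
Your key local translation is incorrect, and the error sits exactly at the point that makes this proposition nontrivial. Removing $s_i^{top}$ exposes $s_{i-1}^{top}$ as a maximal element \emph{not} exactly when $s_{i-1}^{top}<s_i^{top}$: one needs in addition that $s_{i-1}^{top}$ has no other cover, i.e.\ that $s_{i-2}$ is absent from the support or $s_{i-2}^{top}<s_{i-1}^{top}$ (in an alternating heap the covers of $s_{i-1}^{top}$ are precisely the tops of those neighboring generators that lie above it). This extra condition is what the paper's reducibility witness records, namely ``$s_0^{top}>s_1^{top}$ and either $|H_{s_2}|=0$ or $s_1^{top}>s_2^{top}$'', and dropping it is fatal: under your stated equivalence, any maximal $s_i^{top}$ having a neighbor in the support (whose top is then necessarily below it) could be reduced away, so the only irreducible elements would be those whose support contains no two adjacent generators --- in particular the zigzag heaps of the statement would come out ``reducible'', contradicting the very proposition you are proving. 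The same error wrecks your converse direction: the true reason a zigzag heap admits no reduction is that each valley $s_{i+1}^{top}$ is covered by \emph{two} peaks, $s_i^{top}$ and $s_{i+2}^{top}$ (here one uses that the endpoints of every maximal support interval are peaks), so deleting one peak leaves the valley covered by the other and nothing new becomes maximal. Your stated reason --- ``at each maximal $s_i^{top}$ at least one neighbor sits below it and thus is already maximal in $\H(w)$ before removal, blocking the defining condition'' --- is incoherent: an element lying below $s_i^{top}$ is by definition not maximal, and if a neighbor's top \emph{were} already maximal before the removal it would remain maximal afterwards, which would \emph{satisfy} Definition~\ref{defi:reduction} rather than block it.

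Two further points. In the forward direction, once the zigzag fails you must exhibit an actual reduction, which requires producing a \emph{maximal} top: if $s_i^{top}>s_{i+1}^{top}$ with $s_{i+2}$ absent or $s_{i+1}^{top}>s_{i+2}^{top}$, but $s_i^{top}$ is not maximal, then $s_{i-1}^{top}>s_i^{top}$ and the same configuration recurs one step to the left, so one climbs until a maximal top is found (this terminates, since in the full-support cyclic case one cannot have $s_i^{top}>s_{i+1}^{top}$ for all $i$). This is the content hidden in the paper's phrase ``up to a cyclic shift of the indices and a relabeling'', and your proposal never supplies it --- nor can it, with the faulty local equivalence. Finally, invoking Proposition~\ref{prop:involutionsAaffines} as ``independent confirmation'' of the parity constraint is a non sequitur: that result concerns self-dual heaps, whereas irreducible elements need not be involutions (the cells containing no involution are exactly the interesting case in this section); the parity here comes solely from the impossibility of a strict alternation closing up around an odd cycle, which is the one part of your full-support argument that is correct.
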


\begin{proof}
First it is clear that such elements are indeed irreducible.
Suppose $w$ is not of this form. Then (up to a cyclic shift of the
indices and a relabeling $i\mapsto n-i$), the following situation
occurs: one has $s_0^{top}>s_1^{top}$ and either $|H_{s_2}|=0$ or
$s_1^{top}>s_2^{top}$. Therefore $w\stackrel{R}{\rightarrow} s_0w$ and
we are done.
\end{proof}
For $w$ irreducible, we now define a particular subset of elements in $\H(w)$: this will allow us to connect irreducible heaps with FC involutions. 

 If the support of $w$ is not full, then we select all maximal elements in the heap. This is illustrated in Figure~\ref{fig:irr2}, left. 
\begin{figure}[!ht]
\begin{center}
\includegraphics[width=12cm]{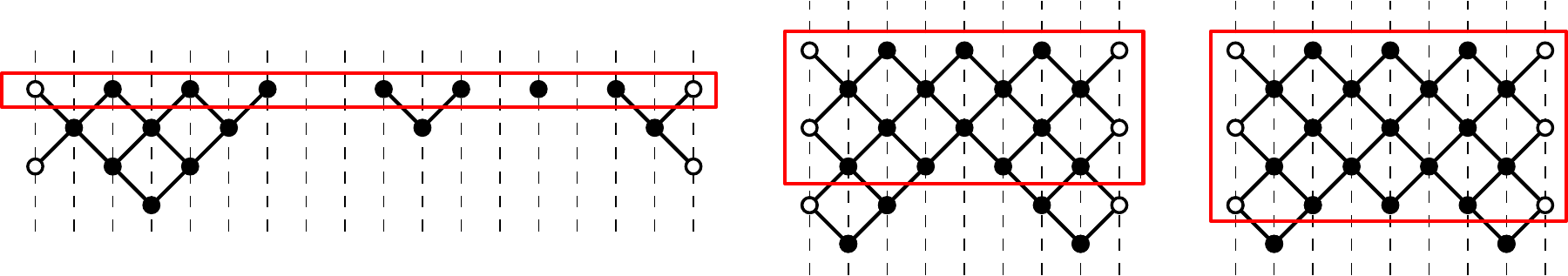}
\caption{Heaps of irreducible elements.\label{fig:irr2}}
\end{center}
\end{figure}

\noindent Otherwise, $n$ is even by the proposition. Define $R_0:=s_0s_2\cdots s_{n-2}$ and $R_1:=s_1s_3\cdots s_{n-1}$. Then select in $\H(w)$ the upper ideal isomorphic to the heap of $R_\epsilon R_{1-\epsilon}R_\epsilon R_{1-\epsilon}\cdots R_\delta R_{1-\delta}$ with the maximal number of factors, where $\epsilon,\delta\in\{0,1\}$. Two such examples are illustrated in Figure~\ref{fig:irr2}, middle and right, the one in the middle (\emph{resp.} right) having an odd (\emph{resp.} even) number of such factors.

In each case, denote by $H^{top}$ the subset of selected elements and $H^{bottom}$ the complement in $\H(w)$.

\begin{proposition}\cite{FanGreen_Affine}
Distinct irreducible elements belong to different right cells.
\end{proposition}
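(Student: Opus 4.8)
The goal is to show that two distinct irreducible FC elements $w,w'$ of type $\aff{A}_{n-1}$ lie in distinct right cells, i.e.\ that $w\stackrel{R}{\sim}w'$ forces $w=w'$. The plan is to use the just-defined decomposition $\H(w)=H^{top}\sqcup H^{bottom}$ to attach to each irreducible $w$ a canonical FC \emph{involution} $\iota(w)$, and to prove that the cell relation is compatible with reduction in such a way that the involution is a cell invariant. Since by Lemma~\ref{lemma:reduction_cells} reduction preserves the right cell, and every FC element reduces to a \emph{unique} irreducible one, it suffices to show that the map sending an irreducible element to its associated involution is injective; equivalently, that an irreducible element can be recovered from the involution $\iota(w)$ together with the cell data.

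First I would make precise the construction of $\iota(w)$ from $H^{top}$. The idea is that $H^{top}$, being (in the non-full-support case) the antichain of maximal elements, or (in the full-support case) an upper ideal isomorphic to an alternating zigzag $R_\epsilon R_{1-\epsilon}\cdots$, is exactly the data of a self-dual alternating heap: one symmetrizes $H^{top}$ by gluing it to its dual along the selected layer, producing a self-dual alternating heap and hence, by Proposition~\ref{prop:descriptionAaffine}, a FC involution. The key structural point to verify is that the selected set $H^{top}$ depends only on the right cell of $w$ and not on $w$ itself. This is where Lemma~\ref{lemma:reduction_cells} does the real work: I would show that whenever $w\stackrel{R}{\rightarrow}ws_i$, the top data $H^{top}$ is unchanged, so that the involution $\iota$ is constant along reduction chains and therefore constant on the whole cell generated by reductions.

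The crux is then to argue that $H^{bottom}$ is itself reconstructible from $H^{top}$ together with the constraint of irreducibility. Concretely, I would show that an irreducible element is determined by its multiset of column heights $(|H_{s_i}|)_{i}$ and the zigzag pattern of its maximal elements, and that the involution $\iota(w)$ already records the parities/local shape via the self-dual heap it yields; the irreducibility condition $s_i^{top}>s_{i+1}^{top}<\cdots$ pins down how $H^{bottom}$ must be filled beneath $H^{top}$. Thus two irreducible elements with the same $\iota(w)$ and hence (by the compatibility above) in the same cell must have identical heaps, giving $w=w'$.

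The main obstacle I anticipate is the full-support case, where $n$ is even and a cell may contain \emph{no} involution at all (as in Theorem~\ref{theo:cells_involutions}); here the naive symmetrization of $H^{top}$ need not land on an honest involution, and the parity of the number of zigzag factors $R_\epsilon R_{1-\epsilon}\cdots R_\delta R_{1-\delta}$ (odd versus even, as distinguished in Figure~\ref{fig:irr2}) matters. I would handle this by treating the full-support case separately: rather than passing through an involution, I would show directly that the irreducibility normal form, i.e.\ the zigzag word of maximal length together with the underlying column heights, is a complete invariant of the cell, using the explicit action of the generators $E_{s_i}$ on heaps to verify that no two distinct such normal forms can satisfy $w\stackrel{R}{\leqslant}w'$ and $w'\stackrel{R}{\leqslant}w$ simultaneously. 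In the non-full-support case the argument via $\iota(w)$ should go through cleanly, and combining the two cases yields the proposition.
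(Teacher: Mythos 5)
The paper does not actually prove this proposition: it explicitly defers to Fan--Green, remarking that this is ``arguably the crucial part of the argument'' in their paper. So your attempt must be judged on its own merits, and it has a genuine gap at its core. Your strategy is to build an invariant $\iota(w)$ from $H^{top}$ and show it is ``constant along reduction chains,'' invoking Lemma~\ref{lemma:reduction_cells}. But that lemma gives only one implication: reduction implies membership in the same cell. The right cell relation $\stackrel{R}{\sim}$ is defined through the preorder $\stackrel{R}{\leqslant}$, i.e.\ through the existence of \emph{arbitrary} FC elements $x$ with $E_{w'}=E_wE_x$, and there is no reason (and you give none) why two elements in the same cell must be connected by a chain of reductions. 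An invariant that is constant under reduction is therefore not automatically a cell invariant. Ruling out that products $E_wE_x$ can link two distinct irreducible elements is precisely the hard content of the proposition --- it requires analyzing the algebra ${\rm TL}(\aff{A}_{n-1})$ itself (Fan--Green do this via a nontrivial study of products in the algebra), not just the combinatorics of reductions. Your treatment of the full-support case makes this visible: ``use the explicit action of the generators $E_{s_i}$ on heaps to verify that no two distinct normal forms can satisfy $w\stackrel{R}{\leqslant}w'$ and $w'\stackrel{R}{\leqslant}w$'' is a restatement of the goal, not an argument.

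There is also a circularity. You assume early on that ``every FC element reduces to a unique irreducible one.'' Uniqueness of the irreducible obtained by reduction is not free: if $w$ reduces to two irreducible elements $u,u'$, then by Lemma~\ref{lemma:reduction_cells} both lie in the cell of $w$, and the only way to conclude $u=u'$ is to already know the proposition you are trying to prove (distinct irreducibles lie in distinct cells). So that assumption is essentially equivalent in strength to the statement itself and cannot be used as an ingredient. A correct proof must engage with the multiplicative structure of the Temperley--Lieb algebra; the heap combinatorics developed in this section organizes the cells once the proposition is known, but does not by itself yield it.
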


We will not prove this proposition which is arguably the crucial part
of the argument in~\cite{FanGreen_Affine}. An immediate consequence is
that \emph{each right cell contains precisely one irreducible element}.

To obtain Theorem~\ref{theo:cells_involutions}, we need to relate
irreducible elements to involutions. One easily constructs an irreducible element from a FC involution by reducing it repeatedly. It can be seen in this case that the
process is reversible: given such an irreducible element with heap $H$, take the dual of $H^{bottom}$ and add it as an upper ideal to $H$, as illustrated in Figure~\ref{fig:irr3}. 

\begin{figure}[!ht]
\begin{center}
\includegraphics[width=11cm]{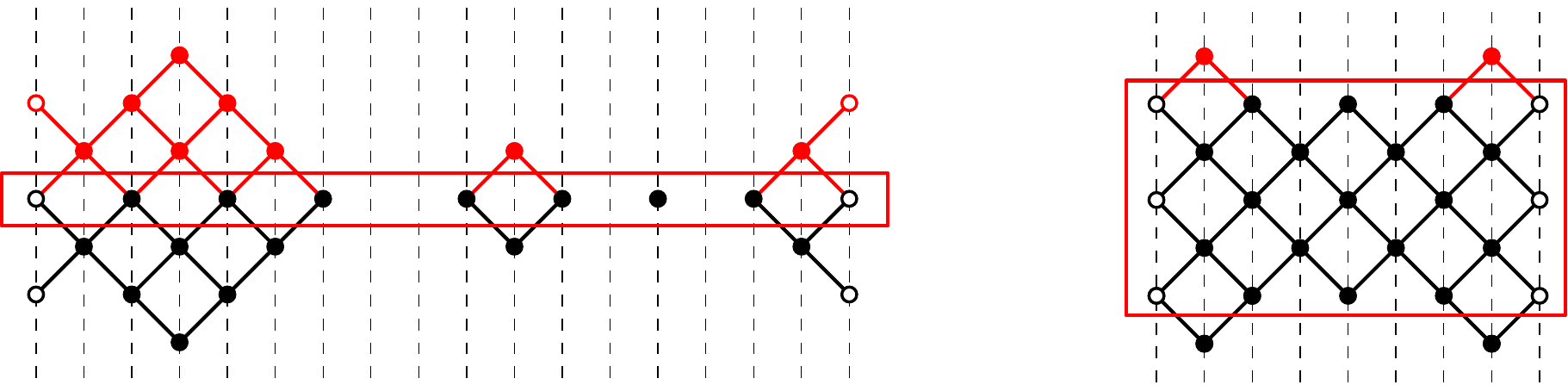}
\caption{Heaps of FC involutions corresponding to irreducible elements.\label{fig:irr3}}
\end{center}
\end{figure}

Right cells with no involution are now easy to characterize: those are the ones whose unique irreducible element has full support and a top part $H^{top}$ with an even number of factors $R_0,R_1$: see for example the right heap in Figure~\ref{fig:irr2}. Indeed the inverse process does not produce a FC heap in this case.

%%\nocite{*}
%\bibliographystyle{plain}
%\bibliography{fullycommut}

\def\cprime{$'$}

\end{document}